\numberwithin{equation}{section}
\theoremstyle{remark}
\newtheorem{remark}{Remark}[section]
\theoremstyle{plain}
\newtheorem{theorem}[remark] {Theorem}
\newtheorem{proposition}[remark]{Proposition}
\newtheorem{lemma}[remark]{Lemma}
\theoremstyle{definition}
\newtheorem{example}[remark]{Example}
\newcommand{\R}{\mathbb R}
\newcommand{\bs}[1]{{\boldsymbol{#1}}}
\begin{document}

\title{Generalized Rescaled P\'olya urn and its statistical applications} 
\author{ Giacomo Aletti
\footnote{ADAMSS Center,
  Universit\`a degli Studi di Milano, Milan, Italy, giacomo.aletti@unimi.it}
$\quad$and$\quad$ 
Irene Crimaldi
\footnote{IMT School for Advanced Studies, Lucca, Italy, 
irene.crimaldi@imtlucca.it}
} 
\maketitle

\abstract{We introduce the Generalized Rescaled P\'olya (GRP) urn, that
provides a generative model for a chi-squared test of goodness of fit
for the long-term probabilities of clustered data, with independence
between clusters and correlation, due to a reinforcement mechanism,
inside each cluster.  We apply the proposed test to a data set of
Twitter posts about COVID-19 pandemic: in a few words, for a classical
chi-squared test the data result strongly significant for the
rejection of the null hypothesis (the daily long-run sentiment rate
remains constant), but, taking into account the correlation among
data, the introduced test leads to a different conclusion. Beside the
statistical application, we point out that the GRP urn is a simple
variant of the standard Eggenberger-P\'olya urn, that, with suitable
choices of the parameters, shows ``local'' reinforcement, almost sure
convergence of the empirical mean to a deterministic limit and
different asymptotic behaviours of the predictive mean. Moreover, the
study of this model provides the opportunity to analyze stochastic
approximation dynamics, that are unusual in the related literature.  \\[10pt]
\noindent {\em Keywords:} central limit theorem, chi-squared test,
P\'olya urn, preferential attachment, reinforcement learning,
reinforced stochastic process, stochastic approximation, urn model.}
\\[10pt]
\noindent {\em MSC2010 Classification:} 60F05, 60F15, 62F03, {\em Secondary} 62F05, 62L20.
 
 \renewcommand\contentsname{\noindent\hrulefill\\ Contents}
\tableofcontents
\addtocontents{toc}{~\hfill\textbf{Page}\par}

\section{Introduction}

The standard Eggenberger-P\'olya urn (see  	{EggPol23,mah}) has
been widely studied and generalized (for instance, some recent
variants can be found in
\cite{AlGhRo,AlGhVi,BeCrPrRi16,chen2013,collevecchio2013,lar-pag,mailler,mah}). 
In its simplest form, this model with $k$-colors works as follows.
An urn contains $N_{0\, i}$ balls of color $i$, for $i=1,\dots, k$,
and, at each discrete time, a ball is extracted from the urn and then
it is returned inside the urn together with $\alpha>0$ additional
balls of the same color. Therefore, if we denote by $N_{n\, i}$ the
number of balls of color $i$ in the urn at time $n$, we have
\begin{equation*}
N_{n\, i}=N_{n-1\,i}+\alpha\xi_{n\,i}\qquad\mbox{for } n\geq 1,
\end{equation*}
where $\xi_{n\,i}=1$ if the extracted ball at time $n$ is of color
$i$, and $\xi_{n\,i}=0$ otherwise. The parameter $\alpha$ regulates
the reinforcement mechanism: the greater $\alpha$, the greater the
dependence of $N_{n\,i}$ on $\sum_{h=1}^n\xi_{h\,i}$.\\
\indent The Generalized Rescaled P\'olya (GRP) urn model is
characterized by the introduction of the sequence $(\beta_n)_n$ of
parameters, together with the replacement of the parameter $\alpha$ of
the original model by a sequence $(\alpha_n)_n$, so that
\begin{equation*}
\begin{aligned}
N_{n\, i}& =b_{0\, i}+B_{n\, i} &&\text{with }
\\
B_{n+1\, i}&=\beta_n B_{n\, i}+\alpha_{n+1}\xi_{n+1\, i}&& n\geq 0.
\end{aligned}
\end{equation*}
Therefore, the urn initially contains $b_{0\,i}+B_{0\,i}$ balls of
color $i$ and the parameters $\beta_n\geq 0$, together with
$\alpha_n>0$, regulate the reinforcement mechanism. More precisely,
the term $\beta_n B_{n\,i}$ links $N_{n+1\,i}$ to the
``configuration'' at time $n$ through the ``scaling'' parameter
$\beta_n$, and the term $\alpha_{n+1}\xi_{n+1\,i}$ links $N_{n+1\,i}$
to the outcome of the extraction at time $n+1$ through the parameter
$\alpha_{n+1}$.\\
\indent We are going to show that, with a suitable choice of the model
parameters, we have a long-term almost sure convergence of the
empirical mean $\sum_{n=1}^N\xi_{n\, i}/N$ to the deterministic limit
$p_{0\,i}=b_{0\,i}/\sum_{i=1}^nb_{0\,i}$, and a chi-squared goodness
of fit result for the long-term probabilities
$\{p_{0\,1},\dots,p_{0\,k}\}$. In particular, regarding the last
point, we have that the chi-squared statistics
\begin{equation}\label{eq:chi1_start}
  \chi^2 =
  N \sum_{i=1}^k \frac{( \widehat{p}_i - p_{0\,i})^2}{p_{0\,i}}=
  \sum_{i=1}^k \frac{(O_i-Np_{0\,i})^2}{Np_{0\,i}},
\end{equation}
where $N$ is the size of the sample, $\widehat{p}_i=O_i/N$, with
$O_i=\sum_{n=1}^N\xi_{n\, i}$ the number of observations equal to $i$
in the sample, is asymptotically distributed as $\chi^2(k-1)\lambda$,
with $\lambda>1$, or $\chi^2(k-1)N^{1-2e}\lambda$, where $\lambda>0$
may be smaller than $1$, but $e$ is always strictly smaller than
$1/2$.  In both cases, the presence of correlation among units
mitigates the effect in \eqref{eq:chi1_start} of the sample size $N$,
that multiplies the chi-squared distance between the observed
frequencies and the expected probabilities. This aspect is important
for the statistical applications in the context of a ``big sample'',
when a small value of the chi-squared distance might be significant,
and hence a correction related to the correlation between observations
is desirable (see, for instance,
\cite{bergh,BERTONI2018,chanda,MR0403125,gleser,knoke2002,MR1894384,radlow,RaoScott81,MR3190613}). More
precisely, in the first case, the observed value of the chi-squared
distance has to be compared with the ``critical'' value
$\chi^2_{1-\theta}(k-1)\lambda/N$, where $ \chi^2_{1-\theta}(k-1)$
denotes the quantile of order $1-\theta$ of the chi-squared
distribution $\chi^2(k-1)$. In the second case, the critical value for
the chi-squared distance becomes
$\chi^2_{1-\theta}(k-1)\lambda/N^{2e}$, where, although the constant
$\lambda$ may be smaller than $1$, the effect of the sample size $N$
is mitigated by the exponent $2e<1$. In other words, for this second
case, the Fisher information given by the sample does not scale with
the sample size $N$, but with rate $N^{2e}$. Hence, since the
long-term correlation, collecting more and more data does not provide
a linear increment of the information.  \\ \indent Summing up, the GRP
urn provides a theoretical framework for a chi-squared test of
goodness of fit for the long-term probabilities of correlated data,
generated according to a reinforcement mechanism. Specifically, we
describe a possible application in the context of clustered data, with
independence between clusters and correlation, due to a reinforcement
mechanism, inside each cluster.  In particular, we develop a suitable
estimation technique for the fundamental model parameters. We then
apply the proposed test to a data set of Twitter posts about COVID-19
pandemic. Given the null hypothesis that the daily long-run sentiment
rate of the posts is the same for all the considered days (suitably
spaced days in the period February 20th - April 20th 2020), performing
a classical $\chi^2$ test, the data result strongly significant for
the rejection of the null hypothesis, while, taking into account the
correlation among posts sent in the same day, the proposed test leads
to a different conclusion.  \\
\indent The sequel of the paper is so structured. In Section
\ref{urn-model} we set up the notation and we define the GRP urn. In
Section \ref{sec-comparison} we illustrate its relationships with
previous models and we discuss the connections with related
literature. In particular, the object of the present work gives us the
opportunity to study Stochastic Approximation (SA) dynamics, which are
infrequent in SA literature and so fill in some theoretical gaps. In
Section \ref{sec-main} we provide the main result of this work, that
is the almost sure convergence of the empirical means to the
deterministic limits $p_{0\,i}$ and the goodness of fit result for the
long-term probabilities $p_{0\,i}$, together with comments and
examples. In Section \ref{sec-appl} we describe a possible statistical
application of the GRP urn and the related results: a chi-squared test
of goodness of fit for the long-term probabilities of clustered
data, with independence between clusters and correlation, due to a
reinforcement mechanism, inside each cluster.  We apply the proposed
test to a data set of Twitter posts about COVID-19 pandemic. In Section
\ref{sec-CLT} we state two convergence results for the empirical
means, which are the basis for the proof of the main theorem. All the
shown theoretical results are analytically proven. The proofs are left
to Section~\ref{sec-proofs} in the Supplementary Material
\cite{AleCri_new20_suppl}, except for the proof of Theorem~\ref{CLT},
which is methodologically new and emphasizes new techniques of
martingale limit theory and so it is illustrated in Section
\ref{proof-CLT}. Finally, in the Supplementary Material we also
provide some complements, some technical lemmas and some recalls about
stochastic approximation theory and about stable convergence.  When
necessary, the references to the Supplementary Material are preceded
by an ``S'', so that (S$1.2$) will refer to the equation (S$1.2$) in
\cite{AleCri_new20_suppl}.


\section{The Generalized Rescaled P\'olya (GRP) urn}
\label{urn-model}

In all the sequel, we suppose given two sequences of parameters
$(\alpha_n)_{n\geq 1}$, with $\alpha_n>0$ and $(\beta_n)_{n\geq 0}$
with $\beta_n \geq 0$.  Given a vector $\bs{x}= (x_1, \ldots,
x_k)^\top\in \mathbb{R}^k$, we set \( |\bs{x}| = \sum_{i=1}^k |x_i| \)
and \( \|\bs{x}\|^2 = \bs{x}^\top \bs{x}= \sum_{i=1}^k |x_i|^2 \).
Moreover we denote by $\bs{1}$ and $\bs{0}$ the vectors with all the
components equal to $1$ and equal to $0$, respectively.  \\

\indent The urn initially contains $b_{0\,i}+B_{0\,i}>0$ distinct balls
of color $i$, with $i=1,\dots,k$.  We set
$\bs{b_0}=(b_{0\,1},\dots,b_{0\,k})^{\top}$ and
$\bs{B_0}=(B_{0\,1},\dots,B_{0\,k})^{\top}$. We assume $|\bs{b_0}|>0$
and we set $\bs{p_0} = \frac{\bs{b_0}}{|\bs{b_0}|}$.  At each discrete
time $(n+1)\geq 1$, a ball is drawn at random from the urn, obtaining
the random vector $\bs{\xi_{n+1}} = (\xi_{n+1\,1}, \ldots,
\xi_{n+1\,k})^\top$ defined as
\begin{equation*}
\xi_{n+1\,i} = 
\begin{cases}
1  &  \text{when the extracted ball at time $n+1$ is of color $i$}
\\
0  & \text{otherwise},
\end{cases}
\end{equation*}
and the number of balls in the urn is so updated:
\begin{equation}\label{eq:reinf1:K}
\bs{N_{n+1}}=\bs{b_0}+\bs{B_{n+1}}\qquad\text{with}
\qquad
\bs{B_{n+1}} = \beta_n \bs{B_n} + \alpha_{n+1} \bs{\xi_{n+1}}\,,
\end{equation}
which gives (since $|\bs{\xi_{n+1}}|=1$)
\begin{equation*}
|\bs{B_{n+1}}|= \beta_n |\bs{B_n}| + \alpha_{n+1}.  
\end{equation*}
Therefore, setting $r^*_{n} = |\bs{N_n}|= |\bs{b_0}|+|\bs{B_n}|$, we
get
\begin{equation}\label{dinamica-rnstar}
r_{n+1}^*=r_n^*+(\beta_n-1)|\bs{B_n}|+\alpha_{n+1}, 
\end{equation}
that is 
\begin{equation}\label{dinamica-rnstar2}
r_{n+1}^*-r_n^*=|\bs{b_0}| (1-\beta_n) - r_n^* (1-\beta_n) +\alpha_{n+1}.
\end{equation}
Moreover, setting $\mathcal{F}_0$ equal to the trivial $\sigma$-field
and $\mathcal{F}_n=\sigma(\bs{\xi_1},\dots,\bs{\xi_n})$ for $n\geq 1$,
the conditional probabilities $\bs{\psi_{n}}= (\psi_{n\,1}, \ldots,
\psi_{n\,k})^\top$ of the extraction process, also called {\em predictive
means}, are
\begin{equation}\label{eq:extract1a:K-vettoriale}
\bs{\psi_{n}}=E[ \bs{\xi_{n+1}}| \mathcal{F}_n ]= 
\frac{\bs{N_n}}{|\bs{N_n}|} =
\frac{\bs{b_0}+\bs{B_n}}{r_n^*}\qquad n\geq 0.
\end{equation}
It is obvious that we have $|\bs{\psi_{n}}|=1$. Moreover, when
$\beta_n>0$ for all $n$, the probability $\psi_{n\,i}$ results
increasing with the number of times we observed the value $i$, that is
the random variables $\xi_{n\,i}$ are generated according to a
reinforcement mechanism: the probability that the extraction of color
$i$ occurs has an increasing dependence on the number of extractions
of color $i$ occurred in the past (see,
e.g. \cite{pemantle2007}). More precisely, we have
\begin{equation}\label{eq-psi_n}
\bs{\psi_n} = 
\frac{ 
\bs{b_0}+ \bs{B_0}\prod_{j=0}^{n-1} \beta_j + 
\sum_{h=1}^n \left(\alpha_h \prod_{j=h}^{n-1}\beta_j \right)\bs{\xi_{h}}
}
{
|\bs{b_0}|+ |\bs{B_0}|\prod_{j=0}^{n-1}\beta_j +
\sum_{h=1}^n  \left(\alpha_h \prod_{j=h}^{n-1}\beta_j \right)
}\,.
\end{equation}
The dependence of $\bs{\psi_n}$ on $\bs{\xi_h}$ depends on the factor
$f(h,n)=\alpha_h \prod_{j=h}^{n-1}\beta_j$, with $1\leq h\leq
n,\,n\geq 0$. In the case of the standard Eggenberger-P\'olya urn,
that corresponds to $\alpha_n=\alpha>0$ and $\beta_n=1$ for all $n$,
each observation $\bs{\xi_h}$ has the same ``weight''
$f(h,n)=\alpha$. Instead, if the factor $f(h,n)$ increases with $h$,
then the main contribution is given by the most recent extractions. We
refer to this phenomenon as ``local'' reinforcement. For instance,
this is the case when $(\alpha_n)$ is increasing and $\beta_n=1$ for
all $n$. Another case is when $\alpha_n=\alpha>0$ and $\beta_n<1$ for
all $n$. The case $\beta_n=0$ for all $n$ is an extreme case, for
which $\bs{\psi_n}$ depends only on the last extraction $\bs{\xi_n}$
(recall that conventionally $\prod_{j=n}^{n-1}=1$). For the next
examples, we will show that they exhibit a broader sense local
reinforcement, in the sense that the ``weight'' of the observations is
eventually increasing with time.  \\ \indent By means of
\eqref{eq:extract1a:K-vettoriale}, together with \eqref{eq:reinf1:K}
and \eqref{dinamica-rnstar}, we have
\begin{equation}\label{dinamica-psin}
\bs{\psi_{n+1}}-\bs{\psi_{n}}=
-\frac{(1-\beta_{n})}{r_{n+1}^*}|\bs{b_0}|\big(\bs{\psi_{n}}-\bs{p_0}\big)
+
\frac{\alpha_{n+1}}{r_{n+1}^*}\big(\bs{\xi_{n+1}}-\bs{\psi_{n}}\big).
\end{equation}
\indent Setting $\bs{\theta_n} = \bs{\psi_{n}}-\bs{p_0}$ and $\Delta
\bs{M_{n+1}}= \bs{\xi_{n+1}}-\bs{\psi_{n}} = \bs{\xi_{n+1}}-\bs{p_0} -
\bs{\theta_n}$ and letting $\epsilon_n =|\bs{b_0}|
\frac{(1-\beta_{n})}{r_{n+1}^*}$ and $\delta_n
=\alpha_{n+1}/r_{n+1}^*$, from \eqref{dinamica-psin} we obtain
\begin{equation}\label{dinamica-psin-bis}
\bs{\psi_{n+1}}-\bs{\psi_{n}}=-\epsilon_{n}(\bs{\psi_{n}}-\bs{p_0})+
\delta_n\Delta\bs{M_{n+1}}
\end{equation}
and so 
\begin{equation}\label{dinamica-thetan}
\bs{\theta_{n+1}}-\bs{\theta_{n}}=-\epsilon_{n}\bs{\theta_{n}}
+\delta_n\Delta\bs{M_{n+1}}\,.
\end{equation}
Therefore, the asymptotic behaviour of $(\bs{\theta_n})$ depends on
the two sequences $(\epsilon_n)_n$ and $(\delta_n)_n$.  
\\
\indent Finally, we observe that, setting
$\bs{\overline{\xi}_{N}}=\sum_{n=1}^N\bs{\xi_{n}}/N$ and
$\bs{\mu_n}=\bs{\overline{\xi}_n}-\bs{p_0}$, we have the equality
\begin{equation}\label{dinamica-mun}
\bs{\mu_{n+1}}-\bs{\mu_n}=-\frac{1}{n}(\bs{\mu_n}-\bs{\theta_n})
+\frac{1}{n}\Delta\bs{M_{n+1}},
\end{equation}
that links the asymptotic behaviour of $(\bs{\mu_n})$ and the one of
$(\bs{\theta_n})$.  \\ \indent Different kinds of sequences
$(\epsilon_n)_n$ and $(\delta_n)_n$ provide different kinds of
asymptotic behaviour of $\bs{\theta_n}$, i.e.~of the empirical mean
$\bs{\overline{\xi}_{N}}$. In Section \ref{sec-main}, we provide two
cases in which we have a long-term almost sure convergence of the
empirical mean $O_i/N=\sum_{n=1}^N\xi_{n\, i}/N$ toward the constant
$p_{0i}=b_{0\,i}/|\bs{b_0}|$, together with a chi-squared goodness of
fit result. In particular, the quantities $p_{0\,1},\ldots,p_{0\,k}$
can be seen as a long-run probability distribution on the possible
values (colors) $\{1,\dots,k\}$. 

\section{Related literature}\label{sec-comparison}

The particular case when in the GRP urn model we have
$\beta_n=\beta=0$ for all $n$ corresponds to a version of the
so-called ``memory-1 senile reinforced random walk'' on a star-shaped
graph introduced in \cite{holmes}. The case $\alpha_n=\alpha>0$ and
$\beta_n=\beta=1$ for all $n$ corresponds to the standard
Eggenberger-P\'olya urn with an initial number
$N_{0\,i}=b_{0\,i}+B_{0\,i}$ of balls of color $i$.  When $(\alpha_n)$
is a not-constant sequence, while $\beta_n=\beta=1$ for all $n$, the
GRP urn coincides with the variant of the Eggenberger-P\'olya urn
introduced in \cite{pem90} (see also \cite[Sec. 3.2]{pemantle2007}).
Instead, when $\beta\neq 1$, the GRP urn does not fall in any variants
of the Eggenberger-P\'olya urn discussed in
\cite[Sec. 3.2]{pemantle2007}. \\ \indent The case when
$\alpha_n=\alpha>0$ and $\beta_n=\beta\geq 0$ for all $n$ corresponds
to the Rescaled P\'olya (RP) urn introduced and studied in
\cite{ale-cri-RP} and applied in \cite{ale-cri-sar}. It is worthwhile
to point out that the two cases studied in the present work do not
include (and are not included in) the case studied in
\cite{ale-cri-RP}.  Moreover, the techniques employed here and in
\cite{ale-cri-RP} are completely different: when $\beta_n=\beta\in
     [0,1)$ as in \cite{ale-cri-RP}, the jumps $\Delta\bs{\psi_n}$ do
       not vanish and the process $\bs{\psi}=(\bs{\psi_n})_n$
       converges to a stationary Markov chain and so the appropriate
       Markov ergodic theory is employed; in this work, we have
       $|\Delta\bs{\psi_n}| = o(1)$, so that the martingale limit
       theory is here exploited to achieve the asymptotic
       results. Obviously, the two techniques are not exchangeable or
       adaptable from one contest to the other one.  \\ \indent When
       $(\beta_n)$ is not identically equal to $1$, since the first
       term in the right hand of the above relation, the GRP urn does
       not belong to the class of Reinforced Stochastic Processes
       (RSPs) studied in
       \cite{AlCrGh,ale-cri-ghi-WEIGHT-MEAN,ale-cri-ghi-MEAN,cri-dai-lou-min,cri-dai-min,dai-lou-min}. Indeed,
       the RSPs are characterized by a ``strict'' reinforcement
       mechanism such that $\xi_{n\,i}=1$ implies
       $\psi_{n\,i}>\psi_{n-1\,i}$ and so, as a consequence,
       $\psi_{n\,i}$ has an increasing dependence on the number of
       times we have $\xi_{h\,i}=1$ for $h=1,\dots,n$. When
       $(\beta_n)$ is not identically equal to $1$, the GRP urn does
       not satisfy the ``strict'' reinforcement mechanism, because the
       first term is positive or negative according to the sign of
       $(1-\beta_n)$ and of $(\bs{\psi_{n}}-\bs{p_0})$. Furthermore,
       we observe that equation \eqref{dinamica-psin} recalls the
       dynamics of a RSP with a ``forcing input'' (see
       \cite{AlCrGh,cri-dai-lou-min,sah}), but the main difference
       relies on the fact that such a process is driven by a classical
       stochastic approximation dynamics, that is a dynamics of the
       kind \eqref{dinamica-psin-bis} with $\epsilon_n=\delta_n$ (up
       to a constant) with $\sum_n\epsilon_n=+\infty$ and
       $\sum_n\epsilon_n^2<+\infty$, while the GRP urn model also
       allows for $\epsilon_n$ and $\delta_n$ with different rates and
       also for
\begin{itemize}
\item $\sum_n \epsilon_n=+\infty$ and $\sum_n \delta_n^2=+\infty$ or 
\item $\sum_n \epsilon_n<+\infty$.
\end{itemize}
\indent Since \eqref{dinamica-psin-bis} is the fundamental equation of
the Stochastic Approximation (SA) theory, we deem it appropriate to
say a few more words on the relationship of the present work with the
SA literature. The case when $\delta_n=c\epsilon_n$ in
\eqref{dinamica-psin-bis} is essentially covered by the Stochastic
Approximation (SA) theory (see Section~\ref{SA-app}, where we refer
to \cite{fort,KusYin03,mok-pel2006,pel,Zhang2016}). The most known
case is when $\sum_n\epsilon_n=+\infty$ and $\sum_n
\epsilon_n^2<+\infty$.  The case $\epsilon_n\to 0$,
$\sum_n\epsilon_n=+\infty$ and $\sum_n \epsilon_n^2=+\infty$ is less
usual in literature, but it is well characterized in \cite{KusYin03}.
The case when $(\epsilon_n)_n$ and $(\delta_n)_n$ in
\eqref{dinamica-psin-bis} go to zero with different rates is typically
neglected in SA literature. To our best knowledge, it is taken into
consideration only in \cite{pel}, where the weak convergence rate of
the sequence $(\bs{\psi_N})$ toward a certain point $\bs{\psi^*}$ is
established under suitable assumptions, given the event
$\{\bs{\psi_N}\to\bs{\psi^*}\}$. No result is given for the empirical
mean $\bs{\overline{\xi}}_N$, which instead is the focus of the
present paper (see Theorem \ref{th-chi-squared-test} below, whose
proof is based on Theorem \ref{CLT}). More precisely, the assumptions
on $\epsilon_n$ and $\delta_n$ in the following Theorem \ref{CLT}
imply assumption (A1.3) in \cite{pel} and so Theorem~1 in that paper
provides the weak convergence rate of the sequence
$(\bs{\psi_N}-\bs{\psi^*})$ given the event
$\{\bs{\psi_N}\to\bs{\psi^*}\}$. However, this result is not useful
for our scope because of two reasons: first, we need convergence
results for the empirical mean $\bs{\overline{\xi}_N}$, not for the
predictive mean $\bs{\psi_N}$; second, in one case included in Theorem
\ref{CLT} (see Section~\ref{sec-CLT} for more details), it seems to us
not immediate to check the convergence of the predictive means and so
we develop another technique that does not ask for this convergence
(see Section~\ref{proof-CLT}). Hence, the contribution of Theorem
\ref{CLT} to the SA literature is that, for a dynamics of the type
\eqref{dinamica-psin-bis} with $(\epsilon_n)_n$ and $(\delta_n)_n$
going to zero with different rates, it provides the asymptotic
behaviour of the empirical mean $\bs{\overline{\xi}_N}$, covering a
case when $\sum_n\epsilon_n=+\infty$ and $\sum_n\delta_n^2=+\infty$
and without requiring the convergence of the empirical means
$\bs{\psi_N}$.  \\ \indent Finally, it is worthwhile to point out that
we also analyze the case when $\sum_n \epsilon_n<+\infty$, which is
also excluded in SA literature and so it could be relevant in that
field. Specifically, we prove almost sure convergence of the
predictive means and of the empirical means toward a random variable
and we give a central limit theorem in the sense of stable
convergence. However, even if interesting from a theoretical point of
view, we collect these results in Section~\ref{sec-compl}, because
they are not related to the chi-squared test of goodness of fit.
\\ \indent The following statistical application of the GRP urn was
inspired by \cite{ale-cri-RP,BERTONI2018,AM2019}. However, those
papers only deal with the case when the statistics
\eqref{eq:chi1_start} is asymptotically distributed as
$\chi^2(k-1)\lambda$, with $\lambda>1$, while we also face the case
when the statistics \eqref{eq:chi1_start} is asymptotically
distributed as $\chi^2(k-1)N^{1-2e}\lambda$, illustrating a suitable
estimation procedure for the fundamental parameters $\eta=1-2e$ and
$\lambda$. To the best of our knowledge, this is the first work
presenting a model that provides a theoretical framework for a such
chi-squared test of goodness of fit.


\section{Main theorem: goodness of fit result}\label{sec-main}

Given a sample $(\bs{\xi_{1}}, \ldots, \bs{\xi_{N}})$ generated by a
GRP urn, the statistics
\begin{equation*}
O_i = \#\{n=1,\dots,N \colon \xi_{n\,i}=1\}=\sum_{n=1}^N \xi_{n\,i}, 
\, \qquad i = 1, \ldots, k,
\end{equation*}
counts the number of times we observed the value $i$.  The theorem
below states, under suitable assumptions, the almost sure convergence
of the empirical mean $\widehat{p}_i=O_i/N=\sum_{n=1}^N\xi_{n\, i}/N$
toward the probability $p_{0\,i}$, together with a chi-squared
goodness of fit test for the long-term probabilities
$p_{0\,1},\dots,p_{0\,k}$. More precisely, we prove the following
result:

\begin{theorem}\label{th-chi-squared-test}
Assume $p_{0\,i}>0$ for all $i=1,\dots,k$ and suppose to be in one of
the following cases:
\begin{itemize}
\item[a)] $\epsilon_n=(n+1)^{-\epsilon}$ and
  $\delta_n=c\epsilon_n$, with $\epsilon\in (0,1]$ and $c>0$, or 
 \item[b)] $\epsilon_n=(n+1)^{-\epsilon}$, $\delta_n\sim c(n+1)^{-\delta}$, with
   $\epsilon\in (0,1)$, $\delta\in (\epsilon/2,\epsilon)$ and $c>0$.
\end{itemize}
Define the constants $e$ and $\lambda$ as
\begin{equation*}
e=\begin{cases}
   1/2\quad &\mbox{in case a) }\\
   1/2-(\epsilon-\delta)<1/2 \quad&\mbox{in case b)}
  \end{cases}
 \end{equation*}
and
\begin{equation}\label{def-lambda}
\lambda=\begin{cases}
(c+1)^2  \quad &\mbox{in case a) with } \epsilon\in (0,1)\,,\\
(c+1)^2+c^2=[2c(c+1)+1] \quad &\mbox{in case a) with } \epsilon=1\,,\\
     \frac{c^2}{1+2(\epsilon-\delta)}\quad &\mbox{in case b)}\,.
        \end{cases}
\end{equation}
Then $\widehat{p}_i=O_i/N\stackrel{a.s.}\longrightarrow p_{0\,i}$ and 
\[
\frac{1}{N^{1-2e}}\sum_{i=1}^{k} \frac{(O_{i} - N p_{0\,i})^2}{Np_{0\,i}}=
N^{2e} \sum_{i=1}^k \frac{( \widehat{p}_i - p_{0\,i})^2}{p_{0\,i}}
\mathop{\longrightarrow}^{d}_{N\to\infty}
W_{*}= \lambda W_{0}
\]
where $W_{0}$ has distribution
$\chi^2(k-1)=\Gamma\big(\frac{k-1}{2},\frac{1}{2})$ and,
consequently, $W_{*}$ has distribution $\Gamma\big(\frac{k-1}{2},
\frac{1}{2\lambda}\big)$.
\end{theorem}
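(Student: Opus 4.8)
The plan is to reduce the theorem to a central limit theorem for the normalized empirical mean vector $\sqrt{N^{2e}}\,\bs{\mu_N}$ (equivalently $N^{e}(\bs{\widehat{p}}-\bs{p_0})$), and then translate that Gaussian limit into the claimed Gamma/chi-squared law for the quadratic form. Let me think through the key structural steps.

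First, I would establish the almost sure convergence $\bs{\widehat{p}}\to\bs{p_0}$, i.e. $\bs{\mu_N}\to\bs{0}$ a.s. From the recursion \eqref{dinamica-thetan} for $\bs{\theta_n}$ one sees that in both cases (a) and (b) the deterministic contraction $-\epsilon_n\bs{\theta_n}$ dominates the martingale noise $\delta_n\Delta\bs{M_{n+1}}$ strongly enough (since $\sum_n\epsilon_n=+\infty$ while $\delta_n$ is comparable to or smaller than $\epsilon_n$), so standard stochastic approximation arguments give $\bs{\theta_n}\to\bs{0}$ a.s. Then \eqref{dinamica-mun} shows $\bs{\mu_n}$ is a Cesàro-type average driven by $\bs{\theta_n}\to\bs{0}$ plus a vanishing martingale term, yielding $\bs{\mu_N}\to\bs{0}$ a.s. This is where I expect to invoke the convergence results for the empirical means announced in Section~\ref{sec-CLT}.

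The core of the proof is the rate. I would appeal to the two convergence theorems promised in Section~\ref{sec-CLT} (the CLT-type results) to obtain the stable/weak convergence $N^{e}\bs{\mu_N}\xrightarrow{d}\mathcal{N}(\bs{0},\Sigma)$ for an explicit asymptotic covariance $\Sigma$. The scaling exponent $e$ and the scalar $\lambda$ should emerge from solving the variance recursion associated with \eqref{dinamica-mun}--\eqref{dinamica-thetan}: the interplay of the step sizes $\epsilon_n=(n+1)^{-\epsilon}$ and $\delta_n\sim c(n+1)^{-\delta}$ fixes the self-similar regime, giving $e=1/2$ in case (a) and $e=1/2-(\epsilon-\delta)$ in case (b), with $\lambda$ as in \eqref{def-lambda}. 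The crucial structural fact is that, because $|\bs{\xi_{n+1}}|=1$ and $|\bs{\psi_n}|=1$, the increments $\Delta\bs{M_{n+1}}$ live in the hyperplane orthogonal to $\bs{1}$, and their conditional covariance converges to $\mathrm{diag}(\bs{p_0})-\bs{p_0}\bs{p_0}^\top$, the standard multinomial covariance. Consequently $\Sigma=\lambda\big(\mathrm{diag}(\bs{p_0})-\bs{p_0}\bs{p_0}^\top\big)$: the only effect of the reinforcement is to inflate the multinomial covariance by the scalar factor $\lambda$.

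The final step is the delta-method / quadratic-form computation. Writing $Q_N=N^{2e}\sum_{i}(\widehat{p}_i-p_{0\,i})^2/p_{0\,i}=\|D^{-1/2}N^{e}\bs{\mu_N}\|^2$ with $D=\mathrm{diag}(\bs{p_0})$, the continuous mapping theorem gives $Q_N\xrightarrow{d}\|D^{-1/2}Z\|^2$ where $Z\sim\mathcal{N}(\bs{0},\Sigma)$. The matrix $D^{-1/2}\Sigma D^{-1/2}=\lambda\big(I-\sqrt{\bs{p_0}}\sqrt{\bs{p_0}}^\top\big)$ (with $\sqrt{\bs{p_0}}$ the vector of $\sqrt{p_{0\,i}}$) is $\lambda$ times an orthogonal projection of rank $k-1$; hence $\|D^{-1/2}Z\|^2$ is $\lambda$ times a sum of $k-1$ independent squared standard Gaussians, i.e. $W_*=\lambda W_0$ with $W_0\sim\chi^2(k-1)$, which is exactly $\Gamma\big(\frac{k-1}{2},\frac{1}{2\lambda}\big)$. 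The main obstacle I anticipate is not this algebra but the rate theorem itself: verifying the Lindeberg/martingale conditions and correctly identifying $\lambda$ in the boundary-type regimes — especially the $\epsilon=1$ subcase of (a), where the extra $+c^2$ term in $\lambda$ signals that the martingale noise contributes at the same order as the drift-driven fluctuations, so both \eqref{dinamica-mun} and \eqref{dinamica-thetan} feed the limiting variance and cannot be decoupled.
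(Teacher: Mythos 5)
Your proposal is correct and follows essentially the same route as the paper's proof: both defer the analytic work to the almost sure convergence and stable CLTs of Theorem \ref{CLT-stesso-ordine} (case a)) and Theorem \ref{CLT} (case b)), which give $N^{e}(\bs{\overline{\xi}_N}-\bs{p_0})\to\mathcal{N}(\bs{0},\lambda\Gamma)$ with $\Gamma=\mathrm{diag}(\bs{p_0})-\bs{p_0}\bs{p_0}^{\top}$, and then carry out the classical Pearson quadratic-form reduction. The only difference is in that last algebraic step, and it is cosmetic: you work in $k$ dimensions and observe that $\mathrm{diag}(\bs{p_0})^{-1/2}\,\Gamma\,\mathrm{diag}(\bs{p_0})^{-1/2}$ is a rank-$(k-1)$ orthogonal projection, whereas the paper truncates to the first $k-1$ coordinates and inverts the resulting full-rank covariance via the Sherman--Morrison formula; the two computations are equivalent and yield the same $\lambda\,\chi^2(k-1)$ limit.
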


We note that $\lambda$ is a constant greater than $1$ in case a);
while, in case b), it is a strictly positive quantity. Moreover, in
case b), we have $0<(\epsilon-\delta)<\epsilon/2<1/2$ and so
$(1-2e)=2(\epsilon-\delta)\in (0,1)$.  As a consequence, we have
$N^{1-2e}\lambda>1$ for $N$ large enough.  \\

\indent In the next two examples we show that it is possible to
construct suitable sequences $(\alpha_n)_n$ and $(\beta_n)_n$ of the
model such that the corresponding sequences $(\epsilon_n)_n$ and
$(\delta_n)$ converge to zero with the same rate or with different
rates and satisfy the assumptions a) or b) of the above theorem,
respectively.

\begin{example}\label{EXA-stesso-ordine}
  {\em (Case $\epsilon_n=(n+1)^{-\epsilon}$ and
    $\delta_n=c\epsilon_n$, with $\epsilon>0$ and $c>0$ )} \\ \rm Take
  $\alpha_{n+1}= c|\bs{b_0}|(1-\beta_n)$, with $\beta_n\in [0,1)$ and
    $c>0$, that implies $ \delta_n =\frac{\alpha_{n+1}}{r_{n+1}^*} =
    c\frac{|\bs{b_0}|(1-\beta_n)}{r_{n+1}^*} = c\epsilon_n $. Set
    $r^*_n = (1+c) |\bs{b_0}|(1-t_n)$ so that from
    \eqref{dinamica-rnstar2} we obtain $t_{n+1} =\beta_n t_n $. Hence,
    we have
$$t_{n+1} = t_0 \prod_{k=0}^n \beta_k = 
\frac{c|\bs{b_0}| - |\bs{B_0}|}{(1+c)|\bs{b_0}|} \prod_{k=0}^n \beta_k
$$
and so
$$
r^*_{n+1} = 
(1+c)|\bs{b_0}|  + \big(|\bs{B_0}|- c|\bs{b_0}|\big) \prod_{k=0}^n \beta_k.
$$ Therefore, setting $\beta^*= \prod_{k=0}^\infty \beta_k \in [0,1)$,
  we get $r^*_n \longrightarrow r^* = (1+c) |\bs{b_0}| + (|\bs{B_0}| -
  c|\bs{b_0}|)\beta^* > 0$. If we choose $|\bs{B_0}|=c|\bs{b_0}|$,
  then $r_n^*=r^*=(1+c)|\bs{b_0}|$ for each $n$ and so, setting
  $\beta_n=1-(1+c)(1+n)^{-\epsilon}$ with $\epsilon>0$, we obtain
  $\epsilon_n=(1+n)^{-\epsilon}$ and $\delta_n=c\epsilon_n$. Taking
  $\epsilon\in (0,1]$, we have that $\epsilon_n$ and $\delta_n$
satisfy assumption a) of Theorem \ref{th-chi-squared-test}. Moreover,
we have $\alpha_n=c|\bs{b_0}|(1+c)n^{-\epsilon}$ and
$1-\beta_n=(1+c)(1+n)^{-\epsilon}$ and so, for the behaviour of the
factor $f(h,n)=\alpha_h\prod_{j=h}^{n-1}\beta_j$ in \eqref{eq-psi_n},
we refer to Section \ref{conti}.
\end{example}

\begin{example}\label{example-2} {\em (Case $\epsilon_n=(n+1)^{-\epsilon}$ and 
$\delta_n\sim c (n+1)^{-\delta}$, with $0<\delta<\epsilon<1$ and $c>0$)}
\\ \rm 
Take $0<\delta<\epsilon<1$ and set $\gamma = \epsilon-\delta>0$,
$r_n^* = n^\gamma$ and $(1-\beta_n)=|\bs{b_0}|^{-1}(1+n)^{-\delta}$.
We immediately have
\begin{equation*}
  \epsilon_n = |\bs{b_0}|\frac{(1-\beta_n)}{r_{n+1}^* } =
  (1+n)^{-\delta-\gamma} = (n+1)^{-\epsilon}
  \end{equation*}
and \eqref{dinamica-rnstar2} yields
$\alpha_{n+1}  = (n+1)^\gamma - n^\gamma [1-|\bs{b_0}|^{-1}(1+n)^{-\delta}] -
(1+n)^{-\delta}$, 
so that
\begin{equation*}
  \begin{split}
\delta_n  & = \frac{\alpha_{n+1}}{r_{n+1}^*}=
\frac{\alpha_{n+1}}{(n+1)^\gamma }   =
1 - \Big( 1 - \frac{1}{n+1}\Big)^\gamma 
\big[1-|\bs{b_0}|^{-1}(1+n)^{-\delta} \big] - (1+n)^{-\delta-\gamma}
\\
& = 
1 - \Big( 1 - \gamma (n+1)^{-1} + O(n^{-2})\Big)
\big[1-|\bs{b_0}|^{-1}(1+n)^{-\delta} \big] - (1+n)^{-\epsilon}
\\
& = |\bs{b_0}|^{-1}(1+n)^{-\delta} 
\Big(
1 + \gamma |\bs{b_0}| (n+1)^{-1+\delta} 
-  |\bs{b_0}| (1+n)^{-\epsilon+ \delta} 
- \gamma (n+1)^{-1}
+ O(n^{-2+\delta})
\Big)\,.
  \end{split}
  \end{equation*}
Setting $c=|\bs{b_0}|^{-1}>0$, we obtain
$\epsilon_n=(n+1)^{-\epsilon}$ and $\delta_n\sim
c(n+1)^{-\delta}$. Taking $\delta\in (\epsilon/2,\epsilon)$, we have
that $\epsilon_n$ and $\delta_n$ satisfy assumption b) of Theorem
\ref{th-chi-squared-test}. Moreover, we have $\alpha_n=
cn^{-(2\delta-\epsilon)} (1 + \gamma c^{-1} n^{-1+\delta} - c^{-1}
n^{-\epsilon+ \delta} - \gamma n^{-1} + O(n^{-2+\delta})) $ and
$(1-\beta_n)=c(1+n)^{-\delta}$, with
$0<2\delta-\epsilon<\delta<(1+2\delta-\epsilon)/2$, and so, for the
behaviour of the factor $f(h,n)=\alpha_h\prod_{j=h}^{n-1}\beta_j$ in
\eqref{eq-psi_n}, we refer to Section \ref{conti}.
\end{example}


\section{Statistical applications}
\label{sec-appl}
In a big sample the units typically can not be assumed independent and
identically distributed, but they exhibit a structure in clusters,
with independence between clusters and with correlation inside each
cluster \cite{BERTONI2018,chessa,ieva,AM2019,tharwat,xu}. The
model and the related results presented in \cite{ale-cri-RP} and in
the present paper may be useful in the situation when inside each
cluster the probability that a certain unit chooses the value $i$ is
affected by the number of units in the same cluster that have already
chosen the value $i$, hence according to a reinforcement rule.
Formally, given a ``big'' sample $\{\bs{\xi}_n:\,n=1,\dots,N\}$, we
suppose that the $N$ units are ordered so that we have the following
$L$ clusters of units:
\begin{equation*}
C_{\ell}=\left\{\sum_{l=1}^{\ell-1}N_l+1,\dots, \sum_{l=1}^\ell N_l\right\},
\qquad \ell=1,\dots, L.
\end{equation*}
Therefore, the cardinality of each cluster $C_\ell$ is $N_\ell$. We
assume that the units in different clusters are independent, that is
$$
[\bs{\xi_1},\dots, \bs{\xi_{N_1}}],\,\dots\,,
[\bs{\xi_{\sum_{l=1}^{\ell-1}N_l+1}},\dots, \bs{\xi_{\sum_{l=1}^\ell N_l}}],\,\dots\,,
[\bs{\xi_{\sum_{l=1}^{L-1}N_l+1}},\dots, \bs{\xi_{N}}]
$$ are $L$ independent multidimensional random variables. Moreover, we
assume that the observations inside each cluster can be modelled as a
GRP satisfying case a) or case b) of Theorem
\ref{th-chi-squared-test}. Given certain (strictly positive) intrinsic
probabilities $p_{0\,1}^*(\ell),\dots, p_{0\,k}^*(\ell)$ for each
cluster $C_\ell$, we firstly want to estimate the model parameters and
then perform a test with null hypothesis
$$
H_0:\quad p_{0\,i}(\ell)=p_{0\,i}^*(\ell)\quad\forall i=1,\dots,k
$$ 
based on the the statistics
\begin{equation}\label{def-Q_ell}
Q_\ell= \frac{1}{N_\ell^{2(\epsilon-\delta)}} \sum_{i=1}^{k}
\frac{\big(O_{i}(\ell) - N_\ell p_{0\,i}^*(\ell)\big)^2}{N_\ell
  p_{0\,i}^*(\ell)},\quad\mbox{with}\; O_i(\ell)=\#\{n\in C_\ell:\,
\xi_{n\,i}=1\},
\end{equation}
and its corresponding asymptotic distribution
$\Gamma\big(\frac{k-1}{2}, \frac{1}{2\lambda}\big)$, where $\lambda$
is given in \eqref{def-lambda}. Note that we can perform the above
test for a certain cluster $\ell$, or we can consider all the clusters
together using the aggregate statistics $\sum_{\ell=1}^L Q_\ell$ and
its corresponding distribution
$\Gamma(\frac{L(k-1)}{2},\frac{1}{2\lambda})$.  \\ \indent Regarding
the probabilities $p_{0\,i}^*(\ell)$, some possibilities are:
\begin{itemize}
 \item we can take $p_{0\,i}^*(\ell)=1/k$ for all $i=1,\dots,k$ if we
   want to test possible differences in the probabilities for the $k$
   different values;
 \item we can suppose to have two different periods of times, and so
   two samples, say $\{\bs{\xi^{(1)}_n}:\,n=1,\dots,N\}$ and
   $\{\bs{\xi^{(2)}_n}:\,n=1,\dots,N\}$, take
   $p_{0\,i}^*(\ell)=\sum_{n\in C_\ell}\xi^{(1)}_{n\,i}/N_\ell$ for
   all $i=1,\dots, k$, and perform the test on the second sample in
   order to check possible changes in the intrinsic long-run
   probabilities;
  \item we can take one of the clusters as benchmark, say $\ell^*$,
    set $p_{0\,i}^*(\ell)=\sum_{n\in C_{\ell^*}}\xi_{n\,i}/N_{\ell^*}$
    for all $i=1,\dots, k$ and $\ell\neq\ell^*$, and perform the test
    for the other $L-1$ clusters in order to check differences with
    the benchmark cluster $\ell^*$.
\end{itemize}
Finally, if we want to test possible differences in the clusters, then
we can take $p_{0\,i}^*(\ell)=p_{0\,i}^*=\sum_{n=1}^N\xi_{n\,i}/N$ for
all $\ell=1,\dots, L$ and perform the test using the aggregate
statistics $\sum_{\ell=1}^L Q_\ell$ with asymptotic distribution
$\Gamma(\frac{(L-1)(k-1)}{2},\frac{1}{2\lambda})$.


\subsection{Estimation of the parameters}
The model parameters are $\epsilon, \delta$ and $c$. However, as we
have seen, the fundamental quantities are $\eta=2(\epsilon-\delta)$
and $\lambda$ given in \eqref{def-lambda}. Moreover, recall that in
case a), we have $\eta=0$ and $\lambda>1$ and, in case b), we have
$\eta\in (0,1)$ and $\lambda>0$. Therefore, according the considered
model, the pair $(\eta,\, \lambda)$ belongs to
$S=\{0\}\times(1,+\infty)\cup (0,1)\times (0,+\infty)$.  In order to
estimate the pair $(\eta,\lambda)\in S$, we define
\begin{equation*}
  T_\ell = N_\ell^{\eta} Q_\ell=
\sum_{i=1}^{k} 
\frac{
\big(O_{i}(\ell) - N_\ell p_{0\,i}^*(\ell)\big)^2
}
{
N_\ell p_{0\,i}^*(\ell)
}\,.
\end{equation*}
Given the observed values $t_1,\dots, t_L$, the log-likelihood
function of $Q_\ell$ reads
\begin{equation*}
\ln(\mathcal{L} (\eta,\, \lambda))=\ln \mathcal{L} (\eta,\,
\lambda;\,t_1,\dots,t_L) = - \frac{k-1}{2} L\ln( \lambda )
-\frac{k-1}{2} \eta \sum_{\ell=1}^L \ln(N_\ell) -
\tfrac{1}{2\lambda}\sum_{\ell=1}^L {\tfrac{t_\ell}{N_\ell^{\eta}}}
+R_1\,,
\end{equation*}
where $R_1$ is a remainder term that does not depend on
$(\eta,\,\lambda)$.  Now, we look for the maximum likelihood estimator
of the two parameters $(\eta,\,\lambda)$.\\ \indent We immediately
observe that, when all the clusters have the same cardinality, that is
all the $N_\ell$ are equal to a certain $N_0$, then we cannot hope to
estimate $\eta$ and $\lambda$, separately. Indeed, the log-likelihood
function becomes
\begin{equation*}
\ln(\mathcal{L} (\eta,\, \lambda))=\ln \mathcal{L} (\eta,\,
\lambda;\,t_1,\dots,t_L) = - \frac{k-1}{2} L\Big[ 
\ln( \lambda )
+ \eta \ln(N_0) \Big]  -
\tfrac{1}{2\lambda N_0^\eta }\sum_{\ell=1}^L  t_\ell 
+R_1 = f (\lambda N_0^\eta )\,.
\end{equation*}
This fact implies that it possible to estimate only the parameter
$(\lambda N_0^\eta)$ as $\widehat{\lambda N_0^\eta} = \sum_{\ell=1}^L
t_\ell / (k-1)L$.\\ \indent From now on, we assume that at least two
clusters have different cardinality, that is at least a pair of
cardinalities $N_\ell$ are different.  We have to find (if they
exist!)  the maximum points of the function
$(\eta,\,\lambda)\mapsto\ln(\mathcal{L}(\eta,\, \lambda))$ on the set
$S$, which is not closed nor limited. First of all, we note that $
\ln(\mathcal{L}(\eta,\, \lambda))\to -\infty$ for $\lambda\to +\infty$
and $\lambda\to 0$.  Thus, the log-likelihood function has maximum
value on the closure $\overline{S}$ of $S$ and its maximum points are
stationary points belonging to $(0,1)\times (0,+\infty)$ or they
belong to $\{0,1\}\times (0,+\infty)$. For detecting the points of the
first type, we compute the gradient of the log-likelihood function,
obtaining
\[
\nabla_{(\eta,\,\lambda)} \ln\mathcal{L} =
\begin{pmatrix}
  - \frac{k-1}{2}  \sum_{\ell=1}^L \ln(N_\ell) 
+ \tfrac{1}{2\lambda}\sum_{\ell=1}^L {\tfrac{t_\ell\ln(N_\ell)}{N_\ell^{\eta}}} 
  \\
- \frac{k-1}{2\lambda} L 
+ \tfrac{1}{2\lambda^2}\sum_{\ell=1}^L {\tfrac{t_\ell}{N_\ell^{\eta}}}  
\end{pmatrix}\,.
\end{equation*}
Hence, the stationary points $(\eta,\,\lambda)$ of the log-likelihood
function are solutions of the system
\begin{equation*}
\left\{
\begin{aligned}
  &
  \frac{ \sum_{\ell=1}^L \tfrac{t_\ell }{N_\ell^{\eta}}\ln(N_\ell) }
       { \sum_{\ell=1}^L  \tfrac{t_\ell}{N_\ell^{\eta}} }
 = 
 \frac{ \sum_{\ell=1}^L \ln (N_\ell) }{L}
  \\
  &
  \lambda =
\frac{ \sum_{\ell=1}^L \tfrac{t_\ell}{N_\ell^{\eta}} }
     { L (k-1) }\,.
\end{aligned}
\right. 
\end{equation*}
In particular, we get that the stationary points are of the form
$(\eta,\,\lambda(\eta))$, with
\begin{equation}\label{lambda-di-eta}
\lambda(\eta) =\frac{\sum_{\ell=1}^L\tfrac{t_\ell}{N_\ell^{\eta}}}{L (k-1)}\,.
\end{equation}
\indent In order to find the maximum points on the border, that is
belonging to $\{0,1\}\times (0,+\infty)$, we observe that, fixed any
$\eta$, the function
\begin{equation*}
\lambda \mapsto - \frac{k-1}{2} L\ln( \lambda )
-
\tfrac{1}{2\lambda}\sum_{\ell=1}^L {\tfrac{t_\ell}{N_\ell^{\eta}}}
+R_2\,,
\end{equation*}
where $R_2$ is a remainder term not depending on $\lambda$, takes its
maximum value at the point $\lambda(\eta)$ defined in
\eqref{lambda-di-eta}.  \\ \indent Summing up, the problem of
detecting the maximum points of the log-likelihood function on
$\overline{S}$ reduces to the study of the maximum points on $[0,1]$
of the function
\begin{equation}\label{function}
\eta\mapsto\ln(\mathcal{L} (\eta,\, \lambda(\eta)))=
- \frac{k-1}{2} L\ln \Big(  \sum_{\ell=1}^L {\frac{t_\ell}{N_\ell^{ \eta }}}  \Big)
- \frac{k-1}{2} \eta \sum_{\ell=1}^L \ln(N_\ell)
+R_3\,,
\end{equation}
where $R_3$ is a remainder term not depending on $\eta$. To this
purpose, we note that we have 
\begin{equation*}
d \frac{\ln(\mathcal{L} (\eta,\, \lambda(\eta))}{d\eta}= 
\frac{k-1}{2} L \left[ \frac{
\sum_{\ell=1}^L {\tfrac{t_\ell }{N_\ell^{ \eta }}} \ln(N_\ell)
}{
\sum_{\ell=1}^L 
 {\frac{t_\ell}{N_\ell^{ \eta }}} 
} 
- \frac{ \sum_{\ell=1}^L \ln(N_\ell) }{ L }
\right] = \frac{(k-1)L}{2} g(\eta)\,,
\end{equation*}
where
\begin{equation*}
g(x) = 
\frac{\sum_{\ell=1}^L {\tfrac{t_\ell }{N_\ell^{x}}} \ln(N_\ell)}
     {\sum_{\ell=1}^L {\tfrac{t_\ell}{N_\ell^{x}}} }
     -  \frac{\sum_{\ell=1}^L \ln(N_\ell) }{L}\,.
\end{equation*}
Setting
\begin{equation*}
p(x,\ell)= 
\frac{\tfrac{t_\ell }{N_\ell^{x}}}
     {\sum_{l=1}^L \tfrac{t_l}{N_l^{x}}}
\end{equation*}
and denoting by $\mathop{E_x}[\cdot]$ and by $E_u[\cdot]$ the mean
value with respect to the discrete probability distribution
$\{p(x,\ell):\,\ell=1,\dots,L\}$ on $\{N_1,\dots,N_L\}$ and with
respect to the uniform discrete distribution on $\{N_1,\dots,N_L\}$
respectively, the above function $g$ can be written as
\begin{equation*}
g(x)= \sum_{\ell=1}^L p(x,\ell)\ln(N_\ell)- \frac{\sum_{\ell=1}^L
  \ln(N_\ell)}{L} = \mathop{E_x}[\ln(N)] - \mathop{E_u}[\ln(N)]\,.
\end{equation*}
Moreover, we have
\begin{equation*}
  \begin{split}
g'(x) &= 
\frac{
\Big(-\sum_{\ell=1}^L 
 {\tfrac{t_\ell }{N_\ell^{x}}} 
 \ln^2( N_\ell) \Big)
\Big(\sum_{\ell=1}^L 
 {\tfrac{t_\ell}{N_\ell^{x}}} \Big)
 +
\Big(\sum_{\ell=1}^L 
 {\tfrac{t_\ell }{N_\ell^{x}}} 
 \ln(N_\ell) \Big)^2
 }{
\Big(\sum_{\ell=1}^L 
 {\tfrac{t_\ell}{N_\ell^{x}}} \Big)^2 
}
 \\
&=
-\sum_{\ell=1}^L 
p({x},\ell)
\ln^2 (N_\ell)
+
\Big( \sum_{\ell=1}^L 
p({x},\ell)
\ln( N_\ell)
 \Big)^2 
 = - \mathop{Var_x}[\ln(N)]\,,
  \end{split}
  \end{equation*}
where $Var_{x}[\cdot]$ denotes the variance with respect to the
discrete probability distribution $\{p(x,\ell):\,\ell=1,\dots,L\}$ on
$\{N_1,\dots,N_L\}$. Since, we are assuming that at least two $N_\ell$
are different, we have $\mathop{Var_x}[\ln(N)]>0$ and so the function
$g$ is strictly decreasing. Finally, we observe that we have
\[
\mathop{Cov_u}(\ln(N),T) = 
\frac{\sum_{\ell=1}^L t_\ell \ln(N_\ell)}{L  }
 -
 \frac{\sum_{\ell=1}^L t_\ell }{L}
 \frac{ \sum_{\ell=1}^L \ln(N_\ell) }{L}
 = g(0)  \frac{\sum_{\ell=1}^L t_\ell }{L}
 \]
 and 
\[
\mathop{Cov_u}(\ln(N),\tfrac{T}{N}) = 
\frac{\sum_{\ell=1}^L \tfrac{t_\ell}{N_\ell} \ln(N_\ell)}{L  }
 -
 \frac{\sum_{\ell=1}^L \tfrac{t_\ell}{N_\ell} }{L}
 \frac{ \sum_{\ell=1}^L \ln(N_\ell) }{L}
 = g(1)  \frac{\sum_{\ell=1}^L \tfrac{t_\ell}{N_\ell} }{L}\,,
 \]
 where $Cov_u(\cdot,\cdot)$ denotes the covariance with respect to the
 discrete joint distribution concentrated on the diagonal and such
 that $P\{N=N_\ell,\,T=t_\ell\}=1/L$ with $\ell=1,\dots, L$. Hence, we
 distinguish the following cases.

\subsubsection*{First case: $\mathop{Cov_u}(\ln(N),T) \leq 0$}
We are in the case when $g(0)\leq 0$ and so the function
\eqref{function} is strictly decreasing for $\eta>0$. Thus, its
maximum value on $[0,1]$ is assumed at
$\widehat{\eta}=0$. Consequently, we have
$\widehat{\lambda}=\lambda(0) = \frac{\sum_{\ell=1}^L t_\ell
}{L(k-1)}$. Recall that we need $(0,\widehat{\lambda})\in S$ and so
$\widehat{\lambda}>1$. If the model fits well the data, this is a
consequence. Indeed, $\widehat{\lambda}$ is an unbiased estimator:
$\widehat{\lambda}\stackrel{d}\sim \Gamma(L(k-1)/2,1/(2\lambda))$ and
so $E[\widehat{\lambda}]=\lambda>1$. A value $\widehat{\lambda}\leq 1$
means a bad fit of the consider model to the data (the smaller the
value of $\lambda$, the worse the fitting). Note that in the threshold
case $(\widehat{\eta}=0,\,\widehat{\lambda}=1)$, the corresponding
test statistics \eqref{def-Q_ell} and its distribution
coincide with the classical ones used for independent observations.

\subsubsection*{Second case: $\mathop{Cov_u}(\ln(N),T) >0$ and
  $\mathop{Cov_u}(\ln(N),\tfrac{T}{N}) < 0$} We are in the case when
$g(0)>0$ and $g(1) < 0$. Hence, the function \eqref{function} has a
unique stationary point $\widehat{\eta}\in (0,1)$, which is the
maximum point. Consequently, we have $\widehat{\lambda} =
\lambda(\widehat{\eta})=\frac{\sum_{\ell=1}^L
  \tfrac{t_\ell}{N_\ell^{\widehat{\eta}}} }{L(k-1)}>0$. The point
$(\widehat{\eta},\widehat{\lambda})$ belongs to $S$.

\subsubsection*{Third case: $\mathop{Cov_u}(\ln(N),\tfrac{T}{N}) \geq 0$}
We are in the case when $g(1) \geq 0$ and so the function
\eqref{function} is strictly increasing on $[0,1]$. Hence, its maximum
point is at $\widehat{\eta}=1$, and, accordingly, we have
$\widehat{\lambda} = \lambda(1)=\frac{\sum_{\ell=1}^L
  \tfrac{t_\ell}{N_\ell} }{L(k-1)}$.  However, the point $(1,
\widehat{\lambda})$ does not belong to $S$ and so, in this case, we
conclude that we have a bad fit of the model to the data. Note that,
if the considered model fits well the data, then we have
$T/N\stackrel{d}\sim \lambda e^{ (\eta-1)\ln(N) }\chi^2(k-1)$ with
$\eta<1$ and, consequently, we expect
$\mathop{Cov_u}(\ln(N),\tfrac{T}{N})<0$. Moreover, a value $\eta\geq
1$ in the statistics \eqref{def-Q_ell} means a central limit theorem
of the type
$N^{(1-\eta)/2}(\bs{\overline{\xi}_{N}}-\bs{p_0})\stackrel{d}\sim
\mathcal{N}(0,C\Gamma)$ with $(1-\eta)/2\leq 0$.  This is impossible
since $(\bs{\overline{\xi}_{N}}-\bs{p_0})$ is bounded.


\section{COVID-19 epidemic Twitter analysis}
We illustrate the application of the above statistical methodology to
a data set containing posts on the on-line social network Twitter
about the COVID-19 epidemic. More precisely, the data set covers the
period from February 20th (h.~11pm) to April to 20th (h.~10pm) 2020,
including tweets in Italian language. More details on the keywords
used for the query can be found in \cite{cal-et-al}. For every
message, the relative sentiment has been calculated using the
\emph{polyglot} python module developed
in~\cite{chen2014building}. This module provides a numerical value $v$
for the sentiment and we have fixed a threshold $T=0.35$ so that we
have classified as a tweet with positive sentiment those with $v>T$
and as a tweet with negative sentiment those with $v <-T$. We have
discarded tweets with a value $v\in [-T,T]$. \\ \indent We are in the
case $k=2$ and the random variables $\xi_{n}=\xi_{n\,1}$ take the
value $1$ when the sentiment of the post $n$ is positive and the value
$0$ when the sentiment of the post $n$ is negative.  We have
partitioned the data so that each set $P_d$ collect the messages of
the single day $d$, for $d=1 \mbox{(February 20st)},\dots, 61
\mbox{(April 20th)}$ and then, in order to obtain independent
clusters, we have set $C_\ell=P_{1+3(\ell-1)}$, for $\ell=1,\dots,21 =
L$. (We have tested the independence of the timed sequence $\{Q_\ell:
\ell=1,\dots,21\}$ with a Ljung–Box test and we give the results in
Table~\ref{tab:LBtest}.) Therefore $N_\ell$ is the total number of
tweets posted during the day $1+3(\ell-1)$ and
$N=\sum_{\ell=1}^LN_\ell=699\,450$ is the sample size. \\ \indent It
is plausible that inside each cluster the sentiment associated to each
message is driven by a reinforcement mechanism, that can be modelled
by means of a GRP: the probability to have a tweet with positive
sentiment is increasing with the number of past tweets with positive
sentiment and the reinforcement is mostly driven by the most recent
tweets, in the sense explained in Section \ref{urn-model}. Note that
the main effect of the GRP urn model is the presence of ``local
fashions'', resulting in unexpected excursions of $\bs{\psi_n}$ around
the long-run probabilities $\bs{p_0}$.  In order to point out that the
considered data set exhibits this characteristics, for each $\ell$, we
have computed the daily sentiment rate $\widehat{p}_0(\ell)$, then,
according to this probability, we have generated an independent
sequence $(\xi'_{n})$ of bernoulli variables, finally we have used the
same smoothing procedure (i.e.~classical cubic spline given in R
package) to get an estimate of $\psi_{n}=\psi_{n\,1}$, for both the
real and the simulated independent data. In Fig.~\ref{fig-dependence}
the daily curves clearly show different behaviors in the two cases,
highlighting a local reinforcement among tweets.\\
\begin{figure}[tbp]
\begin{center}
\fbox{\includegraphics[width= 0.65\textwidth, height =
    0.3\textwidth]{./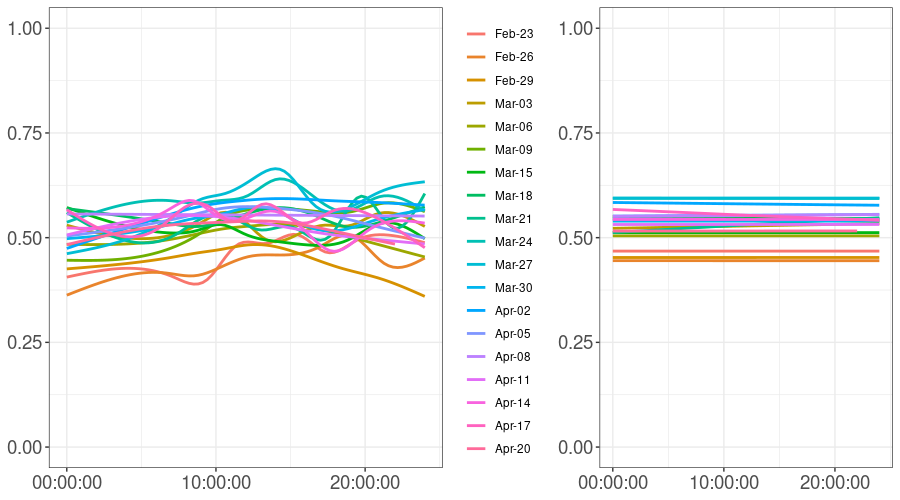}}
\end{center}
\caption{Smoothed daily estimate of $\psi_{n\,1}$ for the Twitter
  dataset (left) and for the simulated independent data (right). The
  daily mean rate $\widehat{p}_0(\ell)$ is the same for both the left
  and the right panel. $x$-axis: daily time. $y$-axis: cubic spline
  smoothing of the observed data $\xi_n$ and of the simulated independent 
  data $\xi'_n$.}
\label{fig-dependence}
\end{figure}
\begin{figure}[tbp]
\begin{center}
\fbox{\includegraphics[width= 0.65\textwidth, height =
    0.3\textwidth]{./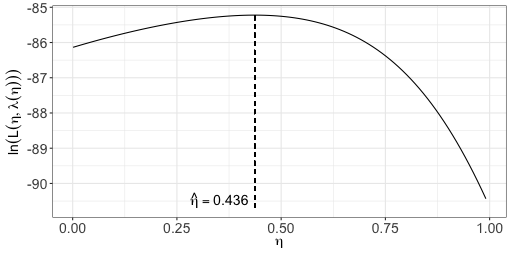}}
\end{center}
\caption{Plot of the function \eqref{function}. Its maximum point
  gives the estimated value of the model parameter $\eta$.}
\label{function-plot}
\end{figure}
\indent Our purpose is to test the null hypothesis $H_0:
\bs{p_{0}}(\ell)=\bs{p_{0}}$ for any $\ell$.  Therefore, taking
$p_{0\,1}^*(\ell)=p_0^*=\sum_{n=1}^N\xi_n/N$ for each $\ell$, we have
firstly estimated the model parameters and then we have performed the
chi-squared test based on the aggregate statistics $\sum_{\ell=1}^L
Q_\ell$ and its corresponding asymptotic distribution
$\Gamma(\frac{(L-1)(k-1)}{2},\frac{1}{2\lambda})$.  The estimated
values are $\widehat{\eta} = 0.4363572$ and $\widehat{\lambda} =
2.728098$ (in Fig.~\ref{function-plot} we plot the function
\eqref{function}).
%
\begin{table}[htbp]
\centering
\begin{tabular}{lrrrrrrrr}
  \hline
Date & $\mathrm{Obs}_+$ & $\mathrm{Obs}_-$ & $\mathrm{Exp}_+$ & $\mathrm{Exp}_-$ & $\chi^2_+$ & $\chi^2_-$ & ${\chi^2_+}^{(c)}$ & ${\chi^2_-}^{(c)}$ \\ 
  \hline
2020-02-20 &  25 &  43 & 35.11 & 32.89 & 2.91 & 3.11 & 0.46 & 0.49 \\ 
  2020-02-23 & 53564 & 60476 & 58886.18 & 55153.82 & 481.02 & 513.58 & 2.99 & 3.19 \\ 
  2020-02-26 & 29831 & 37175 & 34599.51 & 32406.49 & 657.20 & 701.67 & 5.15 & 5.50 \\ 
  2020-02-29 & 18220 & 22184 & 20863.18 & 19540.82 & 334.87 & 357.53 & 3.27 & 3.49 \\ 
  2020-03-03 & 16801 & 14834 & 16335.18 & 15299.82 & 13.28 & 14.18 & 0.14 & 0.15 \\ 
  2020-03-06 & 27906 & 27030 & 28366.99 & 26569.01 & 7.49 & 8.00 & 0.06 & 0.07 \\ 
  2020-03-09 & 41650 & 34769 & 39460.04 & 36958.96 & 121.54 & 129.76 & 0.90 & 0.96 \\ 
  2020-03-12 & 255 & 156 & 212.23 & 198.77 & 8.62 & 9.20 & 0.62 & 0.67 \\ 
  2020-03-15 & 14193 & 13562 & 14331.69 & 13423.31 & 1.34 & 1.43 & 0.02 & 0.02 \\ 
  2020-03-18 & 12064 & 10089 & 11439.02 & 10713.98 & 34.15 & 36.46 & 0.43 & 0.46 \\ 
  2020-03-21 & 11571 & 10026 & 11151.92 & 10445.08 & 15.75 & 16.81 & 0.20 & 0.22 \\ 
  2020-03-24 & 13339 & 9172 & 11623.88 & 10887.12 & 253.07 & 270.20 & 3.19 & 3.41 \\ 
  2020-03-27 & 14798 & 10039 & 12824.94 & 12012.06 & 303.55 & 324.09 & 3.67 & 3.92 \\ 
  2020-03-30 & 12689 & 10651 & 12051.94 & 11288.06 & 33.67 & 35.95 & 0.42 & 0.45 \\ 
  2020-04-02 & 12714 & 9300 & 11367.24 & 10646.76 & 159.56 & 170.36 & 2.03 & 2.17 \\ 
  2020-04-05 & 13373 & 10815 & 12489.82 & 11698.18 & 62.45 & 66.68 & 0.76 & 0.82 \\ 
  2020-04-08 & 14889 & 11987 & 13877.81 & 12998.19 & 73.68 & 78.67 & 0.86 & 0.92 \\ 
  2020-04-11 & 12153 & 10777 & 11840.23 & 11089.77 & 8.26 & 8.82 & 0.10 & 0.11 \\ 
  2020-04-14 & 13406 & 11430 & 12824.42 & 12011.58 & 26.37 & 28.16 & 0.32 & 0.34 \\ 
  2020-04-17 & 13977 & 11371 & 13088.80 & 12259.20 & 60.27 & 64.35 & 0.72 & 0.77 \\ 
  2020-04-20 & 13753 & 12393 & 13500.86 & 12645.14 & 4.71 & 5.03 & 0.06 & 0.06 \\ 
   \hline
\end{tabular}
\caption{Contingency table associated to COVID-Twitter data: $\mathrm{Obs}_+$
  ($\mathrm{Obs}_-$) are the number of posts with positive (negative)
  sentiment posted in the day $\ell$ reported in the first column
  (DataTime); $\mathrm{Exp}_+$ ($\mathrm{Exp}_-$) corresponds to $N_\ell p_0^*$
  (resp. $N_\ell(1-p_0^*)$), where $N_\ell=\mathrm{Obs}_+ + \mathrm{Obs}_-$;
  $\chi^2_+$ ($\chi^2_-$) is the quantity
  ($\mathrm{Obs}_+-\mathrm{Exp}_+)^2/\mathrm{Exp}_+$
  (resp. $(\mathrm{Obs}_--\mathrm{Exp}_-)^2/\mathrm{Exp}_-$); ${\chi^2_+}^{(c)}$
  (${\chi^2_-}^{(c)}$) is the quantity $\chi^2_+/N_\ell^{\widehat{\eta}}$
  (resp. $\chi^2_-/N_\ell^{\widehat{\eta}}$). The statistics $Q_\ell$
  corresponds to ${\chi^2_+}^{(c)}+{\chi^2_-}^{(c)}$.}
\label{table-contingency}
\end{table}

The contingency table and the associated statistics for testing $H_0$
is given in Table~\ref{table-contingency}.  The obtained
$\chi^2$-statistics for a usual $\chi^2$-test is $5507.803$, which is
significant at any level of confidence. Under the proposed GRP model
and the null hypothesis, the aggregate statistics $\sum_{\ell=1}^L
Q_\ell $ has (asymptotic) distribution
$\Gamma(\frac{L-1}{2},\frac{1}{2\widehat{\lambda}})$ and the
corresponding $p$-value associated to the data is equal to
$0.4579297$.  The null hypothesis that the daily long-run sentiment
rate of the posts is the same for all the considered days is therefore
strongly rejected with a classical $\chi^2$ test, while the same
hypothesis is accepted if we take into account the reinforcement
mechanism of correlation given in GRP model.\\ \indent In
Fig. \ref{Ql-plot} there are the values of the single statistics
$Q_\ell$ compared to the $95th$-quantile of the distribution
$\Gamma(\frac{1}{2},\frac{1}{2\widehat{\lambda}})$.
\begin{figure}[tbp]
\begin{center}
\fbox{\includegraphics[width= 0.65\textwidth]{./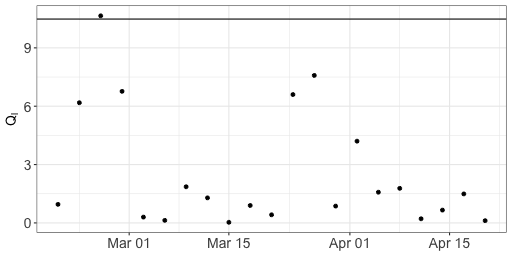}}
\end{center}
\caption{Plot of the $Q_\ell$-series. The black line corresponds to
  the value of $95th$-quantile of the distribution
  $\Gamma(\frac{1}{2},\frac{1}{2\widehat{\lambda}})$, that is
  $10.48$.}
\label{Ql-plot}
\end{figure}
%
\begin{table}[ht]
\centering
\begin{tabular}{|l|rrrrrrrrrr|}
   \hline
Df & 1 & 2 & 3 & 4 & 5 & 6 & 7 & 8 & 9 & 10 \\ 
$\chi^2$  & 3.454 & 3.624 & 4.209 & 4.640 & 5.065 & 7.103 & 8.660 & 8.812 & 10.360 & 12.852 \\ 
$p\mathrm{-value}$ & 0.063 & 0.163 & 0.240 & 0.326 & 0.408 & 0.311 & 0.278 & 0.358 & 0.322 & 0.232 \\ 
   \hline
\end{tabular}
\caption{Summary of Ljung–Box test for autocorrelation of $\{Q_\ell:\,
  \ell=1,\dots, 21\}$, with different numbers of autocorrelation lags
  being tested.  Df: number of lags under investigation; $\chi^2$:
  Ljung–Box test statistics, which is distributed as a $\chi^2$
  distribution with Df degrees of freedom under the null hypothesis of
  independence; $p\mathrm{-value}$: $p\mathrm{-value}$ of the
  Ljung–Box test.\\ The strong emotional involvement of the considered
  period had a ``mixing effect'' that cancelled possible significant
  autocorrelation during different 3-delayed days.}
\label{tab:LBtest}
\end{table}


\section{Asymptotic results for the empirical means}\label{sec-CLT}

Theorem \ref{th-chi-squared-test} is a consequence of the following
Proposition\ref{CLT-stesso-ordine} and Theorem~\ref{CLT} for the
empirical means $\bs{\overline{\xi}_N}$. In the sequel, we will use
the symbol $\stackrel{s}\longrightarrow$ in order to denote the stable
convergence (for a brief review on stable convergence, see
Section~\ref{stable-conv-app}).  \\

\indent Leveraging the Stochastic Approximation results collected in
Section~\ref{SA-app}, we prove in 
Section~\ref{proof-CLT-stesso-ordine} the following result:
\begin{proposition}\label{CLT-stesso-ordine}
Take $\epsilon_n=(n+1)^{-\epsilon}$ and $\delta_n=c\epsilon_n$, with
$\epsilon\in (0,1]$ and $c>0$, and set $\Gamma=\mathrm{diag}
  ({\bs{p_0}}) - {\bs{p_0}}{\bs{p_0}^\top}$.  Then
  $\bs{\overline{\xi}_N}\stackrel{a.s.}\longrightarrow \bs{p_0}$ and
\begin{equation*}
\sqrt{N}\left(\bs{\overline{\xi}_N}-\bs{p_0}\right)
\stackrel{s}\longrightarrow 
\mathcal{N}\left(\bs{0}, \lambda \Gamma \right), 
\end{equation*}
with $\lambda=(c+1)^2$ when $0<\epsilon<1$ and
$\lambda=(c+1)^2+c^2=2c(c+1)+1$ when $\epsilon=1$.
\end{proposition}

For the case when $(\epsilon_n)_n$ and $(\delta_n)_n$ in
\eqref{dinamica-psin-bis} go to zero with different rates, we prove
the following theorem (the proof is illustrated in Section
\ref{proof-CLT}):

\begin{theorem}\label{CLT}
Take $\epsilon_n=(n+1)^{-\epsilon}$ and $\delta_n\sim c(n+1)^{-\delta}$, with
$\epsilon\in (0,1)$, $\delta\in (\epsilon/2,\epsilon)$ and $c>0$. Then
$\bs{\overline{\xi}_N}\stackrel{a.s.}\longrightarrow \bs{p_0}$ and
\begin{equation*}
N^{1/2-(\epsilon-\delta)}\left(\bs{\overline{\xi}_N}-\bs{p_0}\right)
\stackrel{s}\longrightarrow 
\mathcal{N}\left(\bs{0}, \frac{c^2}{1+2(\epsilon-\delta)}\Gamma \right), 
\end{equation*}
with $\Gamma=\mathrm{diag} ({\bs{p_0}}) - {\bs{p_0}}{\bs{p_0}^\top}$. 
\end{theorem}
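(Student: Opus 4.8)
The plan is to reduce the whole statement to a single stable martingale central limit theorem, after rewriting the empirical mean as an explicit martingale transform of the increments $\Delta\bs{M_{n+1}}=\bs{\xi_{n+1}}-\bs{\psi_n}$. Throughout, write $\gamma=\epsilon-\delta$, so that $\gamma\in(0,\epsilon/2)\subset(0,1/2)$ and the target rate is $N^{1/2-\gamma}$. The two inputs are the recursion \eqref{dinamica-thetan} for $\bs{\theta_n}=\bs{\psi_n}-\bs{p_0}$ and the closed form $\bs{\overline{\xi}_N}-\bs{p_0}=\frac1N\sum_{n=0}^{N-1}\bs{\theta_n}+\frac1N\sum_{n=1}^N\Delta\bs{M_n}$, obtained by unwinding $\bs{\xi_n}=\bs{p_0}+\bs{\theta_{n-1}}+\Delta\bs{M_n}$ (equivalently, from \eqref{dinamica-mun}).

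First I would establish, via the stochastic approximation results recalled in the Supplementary Material, that $\bs{\theta_n}\to\bs{0}$ almost surely and in $L^2$ with the sharp rate $\E\|\bs{\theta_n}\|^2=O(n^{-(2\delta-\epsilon)})$. The relevant regime is $\sum_n\epsilon_n=+\infty$ together with $\delta_n^2/\epsilon_n\sim c^2(n+1)^{-(2\delta-\epsilon)}\to0$ (using $2\delta>\epsilon$); this is the less standard ``$\sum_n\epsilon_n^2=+\infty$'' case, and the rate reflects the balance $\E\|\bs{\theta_n}\|^2\approx\delta_n^2/(2\epsilon_n)$ in the recursive variance inequality attached to \eqref{dinamica-psin-bis}. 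Since $\bs{\psi_n}=\bs{p_0}+\bs{\theta_n}\to\bs{p_0}$, the conditional covariances $\Gamma_n:=\mathrm{diag}(\bs{\psi_n})-\bs{\psi_n}\bs{\psi_n}^\top$ of $\Delta\bs{M_{n+1}}$ converge almost surely to $\Gamma$, which will be needed later.

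Next I would turn $\bs{\mu_N}=\bs{\overline{\xi}_N}-\bs{p_0}$ into an explicit martingale transform. Rearranging \eqref{dinamica-thetan} gives $\bs{\theta_n}=\frac{\delta_n}{\epsilon_n}\Delta\bs{M_{n+1}}-\frac{1}{\epsilon_n}(\bs{\theta_{n+1}}-\bs{\theta_n})$; summing over $n$ and applying summation by parts to the last term (with $1/\epsilon_n=(n+1)^{\epsilon}$ increasing) yields $\sum_{n=1}^N\bs{\theta_n}=\sum_{n=1}^N\frac{\delta_n}{\epsilon_n}\Delta\bs{M_{n+1}}+R_N$, where $R_N$ collects a boundary term of order $(N+1)^{\epsilon}\|\bs{\theta_{N+1}}\|$ and a correction sum of order $\sum_{n}\big((n+1)^{\epsilon}-n^{\epsilon}\big)\|\bs{\theta_n}\|$. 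The rate from Step~1 shows $N^{1/2-\gamma}R_N/N\to0$ in probability (both pieces scale like $N^{\epsilon/2-1/2}\to0$), the surviving scale coming only from $\delta_n/\epsilon_n\sim c(n+1)^{\gamma}$. Feeding this into the closed form for $\bs{\mu_N}$, the weight-$\sim1$ martingale $\frac1N\sum_{n}\Delta\bs{M_n}$ is of smaller order than the weight-$\sim c(n+1)^{\gamma}$ one because $\gamma>0$, and its contribution and cross terms to the limiting conditional variance are $O(N^{-\gamma})$; hence $N^{1/2-\gamma}\bs{\mu_N}=\sum_{n=1}^N c_{n,N}\Delta\bs{M_{n+1}}+o_P(1)$ with $c_{n,N}=N^{-1/2-\gamma}\frac{\delta_n}{\epsilon_n}$.

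Finally I would apply a stable martingale CLT (as recalled in the Supplementary Material) to the triangular array $\sum_n c_{n,N}\Delta\bs{M_{n+1}}$. The conditional Lindeberg condition is immediate, since $\|c_{n,N}\Delta\bs{M_{n+1}}\|\le N^{-1/2-\gamma}\frac{\delta_n}{\epsilon_n}\,O(1)=O(N^{-1/2})\to0$ uniformly in $n$. For the conditional variance one has $\sum_{n=1}^N c_{n,N}^2\,\Gamma_n=N^{-1-2\gamma}\sum_{n=1}^N\big(\tfrac{\delta_n}{\epsilon_n}\big)^2\Gamma_n$; using $(\delta_n/\epsilon_n)^2\sim c^2(n+1)^{2\gamma}$, $\sum_{n\le N}(n+1)^{2\gamma}\sim N^{1+2\gamma}/(1+2\gamma)$ and $\Gamma_n\to\Gamma$ a.s., a Toeplitz/Kronecker argument gives the deterministic limit $\frac{c^2}{1+2\gamma}\Gamma=\frac{c^2}{1+2(\epsilon-\delta)}\Gamma$, whence stable convergence to $\mathcal N\!\big(\bs{0},\frac{c^2}{1+2(\epsilon-\delta)}\Gamma\big)$. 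The almost sure convergence $\bs{\overline{\xi}_N}\to\bs{p_0}$ follows from the same representation: $\frac1N\sum_n\frac{\delta_n}{\epsilon_n}\Delta\bs{M_{n+1}}\to\bs{0}$ a.s.\ by the martingale strong law, since $\sum_n n^{-2}(\delta_n/\epsilon_n)^2\sim\sum_n n^{2\gamma-2}<+\infty$ (here $\gamma<1/2$ is used). I expect the main obstacle to be the sharp weight and error control: proving $\delta_n/\epsilon_n=c(n+1)^{\gamma}(1+o(1))$ and $N^{1/2-\gamma}R_N/N\to0$ precisely enough, and upgrading the a.s.\ limit $\Gamma_n\to\Gamma$ to convergence of the weighted conditional variance, are exactly where the asymptotics of the reinforcement factor $f(h,n)$ in \eqref{eq-psi_n} and the new martingale arguments must be handled carefully.
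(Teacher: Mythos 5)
Your overall route is essentially the one the paper follows: the $L^2$ rate $E[\|\bs{\theta_n}\|^2]=O(n^{\epsilon-2\delta})$ (Lemma \ref{lemma-pre}), the Abel/summation-by-parts rewriting of $\sum_n\bs{\theta_n}$ as the weighted martingale transform $\sum_n(\delta_n/\epsilon_n)\Delta\bs{M_{n+1}}$ plus a remainder killed at rate $N^{e}$ (Lemma \ref{lemma-decompo}), the observation that the unweighted term $N^{-1}\sum_n\Delta\bs{M_n}$ is $O_P(N^{-1/2})$ and hence negligible at scale $N^{e}$ with $e<1/2$, the stable martingale CLT with the variance computation $N^{-1-2\gamma}\sum_{n\le N}(\delta_n/\epsilon_n)^2\to c^2/(1+2\gamma)$, and the strong law via $\sum_n n^{2\gamma-2}<\infty$ for the almost sure part.

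There is, however, one genuine gap. You assert at the outset that $\bs{\theta_n}\to\bs{0}$ almost surely ``via the stochastic approximation results recalled in the Supplementary Material,'' and you then use the resulting a.s.\ convergence $\Gamma_n=\mathrm{diag}(\bs{\psi_n})-\bs{\psi_n}\bs{\psi_n}^\top\to\Gamma$ to verify the conditional-variance condition of the stable CLT. Those SA results (Theorem \ref{S:SA-conv}) are stated for the dynamics $\bs{\theta_{n+1}}=(1-\epsilon_n)\bs{\theta_n}+\epsilon_n\Delta\bs{\widetilde M_{n+1}}$ with a \emph{uniformly bounded} noise; here the effective noise is $(\delta_n/\epsilon_n)\Delta\bs{M_{n+1}}$ with $\delta_n/\epsilon_n\sim c\,n^{\epsilon-\delta}\to+\infty$, so they do not apply. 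A direct argument also fails outside the subrange $\delta>1/2$: the almost-supermartingale route needs $\sum_n\delta_n^2<\infty$, and Borel--Cantelli from the $L^2$ rate needs $2\delta-\epsilon>1$; the paper explicitly states (after Lemma \ref{lemma-conv-medie}) that in the complementary case it does not know whether $\bs{\psi_n}\to\bs{p_0}$ a.s. The fix is the one the paper adopts: condition (c2) only requires convergence \emph{in probability}, and the moment bounds $E[|\bs{\theta_n}|]=O(n^{\epsilon/2-\delta})$ and $E[\|\bs{\theta_n}\|^2]=O(n^{\epsilon-2\delta})$ from Lemma \ref{lemma-pre} already force the weighted averages of $\bs{\theta_n}$ and $\bs{\theta_n}\bs{\theta_n}^\top$ appearing in the conditional variance to vanish in $L^1$, so the a.s.\ statement is never needed. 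With that substitution your argument closes; without it, the step ``$\Gamma_n\to\Gamma$ a.s., hence the weighted conditional variance converges'' is unsupported on part of the parameter range covered by the theorem.
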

In the framework of the above theorem, we can distinguish the
following two cases:
\begin{itemize}
\item[1)] $\epsilon\in (1/2,1)$ and $\delta\in (1/2,\epsilon)$ or 
\item[2)] $\epsilon\in (0,1)$ and $\delta\in
  (\epsilon/2,\min\{\epsilon,1/2\}]\setminus\{\epsilon\}$.
\end{itemize}
In case 1), we have $\sum_n \epsilon_n=+\infty$,
$\sum_n\epsilon_n^2<+\infty$ and $\sum_n\delta_n^2<+\infty$ and so the
typical asymptotic behaviour of the predictive mean of an urn process,
that is its almost sure convergence. In case 2), we have
$\sum_n\epsilon_n=+\infty$ and $\sum_n\delta_n^2=+\infty$ (while the
series $\sum_n\epsilon_n^2$ may be convergent or divergent) and it
seems to us not immediate to check the convergence of the predict
means. Therefore, for the proof of Theorem \ref{CLT} in this last
case, we will employ a different technique, which is based on the
$L^2$-estimate of Lemma \ref{lemma-pre} for the predictive mean
$\bs{\psi_N}$ and the almost sure convergence of the corresponding
empirical mean $\bs{\overline{\psi}_{N-1}}$.


\section{Proof of Theorem \ref{CLT}}
\label{proof-CLT}

For all the sequel, we set
$\bs{\overline{\psi}_{N-1}}=\sum_{n=1}^{N}\bs{\psi_{n-1}}/N$ and
$\bs{\overline{\theta}_{N-1}}=\sum_{n=1}^{N}\bs{\theta_{n-1}}/N$.  To
the proof of Theorem \ref{CLT}, we premise some intermediate results.

\begin{lemma}\label{lemma-pre}
Under the same assumptions of Theorem \ref{CLT}, we have
$E[\|\bs{\theta_n}\|^2]=O(n^{\epsilon-2\delta})\to 0$. 
\end{lemma}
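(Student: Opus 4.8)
The plan is to reduce the claim to a one‑dimensional recursion for $u_n=E[\|\bs{\theta_n}\|^2]$ and then to read off its decay rate. From \eqref{dinamica-thetan} we have $\bs{\theta_{n+1}}=(1-\epsilon_n)\bs{\theta_n}+\delta_n\Delta\bs{M_{n+1}}$, where, by \eqref{eq:extract1a:K-vettoriale}, $\Delta\bs{M_{n+1}}=\bs{\xi_{n+1}}-\bs{\psi_n}$ is a martingale difference, that is $E[\Delta\bs{M_{n+1}}\mid\mathcal{F}_n]=\bs{0}$. Taking squared Euclidean norms and expectations, the cross term $2(1-\epsilon_n)\delta_n E[\bs{\theta_n}^\top\Delta\bs{M_{n+1}}]$ vanishes, since $\bs{\theta_n}$ is $\mathcal{F}_n$-measurable; moreover, because $\bs{\xi_{n+1}}$ is a canonical basis vector with conditional mean $\bs{\psi_n}$, one has $E[\|\Delta\bs{M_{n+1}}\|^2\mid\mathcal{F}_n]=1-\|\bs{\psi_n}\|^2\le 1$. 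This yields the closed scalar inequality
\begin{equation*}
u_{n+1}\le(1-\epsilon_n)^2\,u_n+\delta_n^2,\qquad n\ge 0,
\end{equation*}
with each $u_n$ finite (indeed $u_n\le 2$, being the mean squared distance between two probability vectors).

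The heart of the argument is to extract the rate from this recursion. Writing $\rho=\epsilon-2\delta$, the assumption $\delta\in(\epsilon/2,\epsilon)$ gives $-\epsilon<\rho<0$. I would prove, by induction for $n$ beyond a suitable $n_0$, the bound $u_n\le K(n+1)^{\rho}$. Using $\delta_n^2\le(c^2+o(1))(n+1)^{-2\delta}$ together with the identity $(1-\epsilon_n)^2=1-2(n+1)^{-\epsilon}+(n+1)^{-2\epsilon}$ and the expansion $(n+2)^{\rho}=(n+1)^{\rho}\big(1+\rho(n+1)^{-1}+O(n^{-2})\big)$, the inductive step amounts to
\begin{equation*}
\delta_n^2\le K\big[(n+2)^{\rho}-(1-\epsilon_n)^2(n+1)^{\rho}\big]
=K(n+1)^{\rho}\big[2(n+1)^{-\epsilon}-(n+1)^{-2\epsilon}+\rho(n+1)^{-1}+O(n^{-2})\big].
\end{equation*}
The decisive point is the exact matching of orders: the leading gain term equals $2K(n+1)^{\rho-\epsilon}=2K(n+1)^{-2\delta}$, which has precisely the same order as the source $\delta_n^2\sim c^2(n+1)^{-2\delta}$, while the discretization drift $\rho(n+1)^{\rho-1}$ is of strictly lower order $(n+1)^{\rho-1}=o\big((n+1)^{\rho-\epsilon}\big)$ because $\epsilon<1$. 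Hence the bracket behaves like $2(n+1)^{-\epsilon}$, and the step closes for $n\ge n_0$ as soon as $2K>c^2$; enlarging $K$ to dominate the finitely many initial terms (possible since $u_n\le 2$) completes the induction and gives $u_n=O\big(n^{\epsilon-2\delta}\big)$.

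The main obstacle will be precisely this balancing in the inductive step: one must check that every sub‑leading correction — from squaring $1-\epsilon_n$, from the asymptotic equivalence $\delta_n\sim c(n+1)^{-\delta}$, and from discretizing $(n+1)^{\rho}$ — is $o\big((n+1)^{-2\delta}\big)$, and it is here that the hypotheses $\epsilon\in(0,1)$ (drift domination) and $\delta>\epsilon/2$ (so that $\rho<0$ and the bound genuinely vanishes) enter. A slightly more computational alternative avoids guessing the ansatz: unrolling the recursion gives $u_{n+1}\le\sum_{h=0}^{n}\delta_h^2\prod_{j=h+1}^{n}(1-\epsilon_j)^2$, one bounds $\prod_{j=h+1}^{n}(1-\epsilon_j)^2\le\exp\big(-2\sum_{j=h+1}^{n}\epsilon_j\big)$ with $\sum_{j=h+1}^{n}\epsilon_j\asymp\big[(n+1)^{1-\epsilon}-(h+1)^{1-\epsilon}\big]/(1-\epsilon)$, and then estimates the resulting sum by comparison with a Laplace‑type integral. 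Both routes deliver the rate $(n+1)^{\epsilon-2\delta}$; I would present the induction, being the cleaner of the two.
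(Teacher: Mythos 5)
Your proof is correct, and it reaches the stated rate by a route that differs from the paper's only in the final step. Both arguments begin identically: expand $\|\bs{\theta_{n+1}}\|^2$ from \eqref{dinamica-thetan}, kill the cross term by the martingale property of $\Delta\bs{M_{n+1}}$, and arrive at the scalar recursion \eqref{eq-cond}, i.e.\ $u_{n+1}\le(1-\epsilon_n)^2u_n+\delta_n^2$ for $u_n=E[\|\bs{\theta_n}\|^2]$. The paper then rewrites this as $x_{n+1}=(1-2\epsilon_n)x_n+2\epsilon_n\zeta_n$ with $\zeta_n=\bigl(\epsilon_nx_n+\delta_n^2\epsilon_n^{-1}E[\|\Delta\bs{M_{n+1}}\|^2]\bigr)/2$ and appeals to Delyon's comparison lemma (Lemma \ref{S:lemma-del}) to get $x_n=O(\delta_n^2/\epsilon_n)\sim c^2n^{\epsilon-2\delta}$; you instead run a direct induction on the ansatz $u_n\le K(n+1)^{\rho}$ with $\rho=\epsilon-2\delta$, verifying that the gain term $2K(n+1)^{\rho-\epsilon}=2K(n+1)^{-2\delta}$ matches the source $\delta_n^2\sim c^2(n+1)^{-2\delta}$ in order while the discretization drift $(n+1)^{\rho-1}$ is negligible because $\epsilon<1$. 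The two arguments are equivalent in substance — your order-matching $\rho-\epsilon=-2\delta$ is precisely the paper's identity $x_n\asymp\delta_n^2/\epsilon_n$ — but yours buys self-containedness and, arguably, transparency: as literally stated, Lemma \ref{S:lemma-del} yields only $\limsup_nx_n\le\limsup_n\zeta_n$, and upgrading this to the rate $O(\delta_n^2/\epsilon_n)$ implicitly requires normalizing $x_n$ by $\delta_n^2/\epsilon_n$ and rerunning the comparison; your induction carries out exactly that normalization explicitly, at the cost of the bookkeeping of sub-leading corrections that you correctly identify (and that the hypotheses $\epsilon<1$ and $\delta\in(\epsilon/2,\epsilon)$ are there to control). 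Your alternative via unrolling the recursion and a Laplace-type estimate is also sound.
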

\begin{proof} We observe that, starting from \eqref{dinamica-psin-bis}, we get  
$$ \|\bs{\theta_{n+1}}\|^2=\bs{\theta_{n+1}}^{\top}\bs{\theta_{n+1}}=
  (1-\epsilon_n)^2\|\bs{\theta_n}\|^2+\delta_n^2\|\Delta\bs{M_{n+1}}\|^2
  +2(1-\epsilon_n)\delta_n\bs{\theta_n}^{\top}\Delta\bs{M_{n+1}}
$$
and so
\begin{equation}\label{eq-cond}
E[\|\bs{\theta_{n+1}}\|^2|\mathcal{F}_n]=
(1-\epsilon_n)^2 \|\bs{\theta_n}\|^2
+\delta_n^2E[\|\Delta\bs{M_{n+1}}\|^2|\mathcal{F}_n]\,.
\end{equation}
Hence, setting $x_n=E[\|\bs{\theta_{n}}\|^2]$, we get
\begin{equation*}
\begin{split}
x_{n+1}&=(1-2\epsilon_n)x_n+\epsilon_n^2x_n+\delta_n^2E[\|\Delta \bs{M_{n+1}}\|^2]
\\
&=(1-2\epsilon_n)x_n+
\epsilon_n
\left(
\epsilon_n x_n+\frac{\delta_n^2}{\epsilon_n}E[\|\Delta \bs{M_{n+1}}\|^2]
\right)\\
&=(1-2\epsilon_n)x_n+2\epsilon_n\zeta_n,
\end{split}
\end{equation*}
with $0\leq\zeta_n=\left( \epsilon_n
x_n+\frac{\delta_n^2}{\epsilon_n}E[\|\Delta \bs{M_{n+1}}\|^2]
\right)/2$.  Applying Lemma \ref{lemma-del}  
(with $\gamma_n=2\epsilon_n$), we find that $\limsup_n
x_n\leq\limsup_n\zeta_n$. On the other hand, since
$(\Delta\bs{M_{n+1}})_n$ is uniformly bounded and
$\epsilon_n^2/\delta_n^2\sim c^{-2}n^{-2(\epsilon-\delta)}\to 0$, we have
$\zeta_n=O(\epsilon_n+\delta_n^2\epsilon_n^{-1})=O(\delta_n^2/\epsilon_n)$
and so $x_n=O(\delta_n^2/\epsilon_n)$. We can conclude recalling that
$\delta_n^2/\epsilon_n\sim c^2 n^{\epsilon-2\delta}$.
\end{proof}

\begin{lemma}\label{lemma-decompo}
Under the same assumptions of Theorem \ref{CLT}, we have
\begin{equation}\label{eq-media-theta}
\bs{\overline{\theta}_{N-1}}=
\frac{1}{N}\sum_{n=1}^{N} \bs{\theta_{n-1}}=
\frac{1}{N}\sum_{n=0}^{N-1}\frac{\delta_n}{\epsilon_n}\Delta\bs{M_{n+1}}
+\bs{R_N}\,,
\end{equation}
where $\bs{R_N}\stackrel{a.s.}\longrightarrow \bs{0}$ and
$N^eE\big[\,|\bs{R_N}|\,\big]\longrightarrow 0$ with
$e=1/2-(\epsilon-\delta)\in (0,1/2)$.
\end{lemma}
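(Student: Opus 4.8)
The plan is to turn the one–step recursion \eqref{dinamica-thetan} into an exact identity for $\bs{\overline{\theta}_{N-1}}$ and then simply read off $\bs{R_N}$. Solving \eqref{dinamica-thetan} for $\bs{\theta_n}$ gives
\[
\bs{\theta_n}=\frac{1}{\epsilon_n}\big(\bs{\theta_n}-\bs{\theta_{n+1}}\big)
+\frac{\delta_n}{\epsilon_n}\Delta\bs{M_{n+1}}\,,
\]
so that, summing over $n=0,\dots,N-1$ and dividing by $N$, the martingale term of \eqref{eq-media-theta} appears automatically and one is forced to set
\[
\bs{R_N}=\frac{1}{N}\sum_{n=0}^{N-1}\frac{1}{\epsilon_n}\big(\bs{\theta_n}-\bs{\theta_{n+1}}\big)\,.
\]
It then remains only to control this remainder in the two requested senses.

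The next step is a summation by parts applied to $\bs{R_N}$. Writing $c_n=\epsilon_n^{-1}=(n+1)^{\epsilon}$, Abel's identity yields
\[
\sum_{n=0}^{N-1}c_n\big(\bs{\theta_n}-\bs{\theta_{n+1}}\big)
=c_0\bs{\theta_0}-c_{N-1}\bs{\theta_N}
+\sum_{n=1}^{N-1}\big(c_n-c_{n-1}\big)\bs{\theta_n}\,,
\]
so that $\bs{R_N}$ is $1/N$ times the sum of an initial term, a terminal term and an ``increment'' sum, where $c_n-c_{n-1}=(n+1)^{\epsilon}-n^{\epsilon}=O(n^{\epsilon-1})$ by the mean value theorem.

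For the almost sure statement I would use only that $(\bs{\theta_n})$ is bounded: since $\bs{\psi_n}$ and $\bs{p_0}$ are probability vectors, $|\bs{\theta_n}|\le 2$. Then each of the three contributions is dominated by $c_{N-1}=N^{\epsilon}$: the initial term is $O(1/N)$, the terminal term is $O(N^{\epsilon-1})$, and the increment sum is bounded by $\tfrac{2}{N}\sum_{n=1}^{N-1}(c_n-c_{n-1})=\tfrac{2}{N}(c_{N-1}-c_0)=O(N^{\epsilon-1})$. Since $\epsilon<1$, all three tend to $\bs{0}$, giving $\bs{R_N}\stackrel{a.s.}\longrightarrow\bs{0}$.

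The delicate part is the $L^1$ rate, and this is where Lemma \ref{lemma-pre} enters. From $E[\|\bs{\theta_n}\|^2]=O(n^{\epsilon-2\delta})$ I obtain $E[|\bs{\theta_n}|]=O(n^{(\epsilon-2\delta)/2})$ by Jensen's inequality. Taking expectations of the three pieces and multiplying by $N^e$ with $e=\tfrac12-(\epsilon-\delta)$, the terminal term contributes $N^e\cdot N^{\epsilon-1}\cdot N^{(\epsilon-2\delta)/2}=O(N^{(\epsilon-1)/2})$, while the increment sum contributes $N^{e-1}\sum_{n=1}^{N-1}O\big(n^{\epsilon-1+(\epsilon-2\delta)/2}\big)$. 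Here one must check that the exponent $\tfrac{3\epsilon}{2}-\delta-1$ lies in $(-1,0)$ — which holds precisely because $\tfrac{\epsilon}{2}<\delta<\epsilon$ — so that the sum is $O(N^{\,3\epsilon/2-\delta})$ and the whole contribution is again $O(N^{(\epsilon-1)/2})$. Since $\epsilon<1$, every term vanishes and $N^eE[\,|\bs{R_N}|\,]\to 0$. I expect the main obstacle to be exactly this bookkeeping: the factor $N^e$ has to be absorbed \emph{simultaneously} by the $L^2$ decay of $\bs{\theta_n}$ from Lemma \ref{lemma-pre} and by the smallness of the increments $c_n-c_{n-1}$, and the hypothesis $\delta\in(\epsilon/2,\epsilon)$ is what makes these two effects combine favourably.
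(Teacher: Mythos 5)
Your proposal is correct and follows essentially the same route as the paper's proof: the identical rearrangement of \eqref{dinamica-thetan}, the same Abel summation identifying $\bs{R_N}$ as boundary terms plus an increment sum, boundedness of $\bs{\theta_n}$ for the almost sure convergence, and the $L^2$ bound of Lemma \ref{lemma-pre} combined with $c_n-c_{n-1}=O(n^{\epsilon-1})$ for the $N^e$-rate in $L^1$. The exponent bookkeeping matches the paper's computation exactly (both reduce to $O(N^{(\epsilon-1)/2})$ via $\epsilon/2<\delta<\epsilon$ and $\epsilon<1$), so there is nothing further to add.
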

\begin{proof} By \eqref{dinamica-thetan}, we have 
\begin{equation*}
\bs{\theta_{n}}=
-\frac{1}{\epsilon_{n}}\left(\bs{\theta_{n+1}}-\bs{\theta_{n}}\right)
+\frac{\delta_n}{\epsilon_n}\Delta\bs{ M_{n+1}}.
\end{equation*}
Therefore, we can write
\begin{equation*}
\begin{split}
\sum_{n=0}^{N-1} \bs{\theta_{n}}&=
-\sum_{n=0}^{N-1} 
\frac{1}{\epsilon_{n}}\left(\bs{\theta_{n+1}}-\bs{\theta_{n}}\right)
+\sum_{n=0}^{N-1}\frac{\delta_n}{\epsilon_n}\Delta\bs{ M_{n+1}}\\
&=
-\left(
\frac{\bs{\theta_{N}}}{\epsilon_{N-1}}-\frac{\bs{\theta_{0}}}{\epsilon_0}
\right)
-\sum_{n=1}^{N-1}
\left(\frac{1}{\epsilon_{n-1}}-\frac{1}{\epsilon_n}\right)\bs{\theta_n}
+\sum_{n=0}^{N-1}\frac{\delta_n}{\epsilon_n}\Delta\bs{ M_{n+1}}\,,
\end{split}
\end{equation*}
where the second equality is due to the Abel transformation for a series.
It follows the decomposition \eqref{eq-media-theta} with 
\begin{equation}\label{def-Rn}
\bs{R_N}=
-\frac{1}{N}\left(
\frac{\bs{\theta_{N}}}{\epsilon_{N-1}}-\frac{\bs{\theta_{0}}}{\epsilon_0}
\right)
-\frac{1}{N}\sum_{n=1}^{N-1}
\left(\frac{1}{\epsilon_{n-1}}-\frac{1}{\epsilon_n}\right)\bs{\theta_n}\,.
\end{equation}
Since $|\bs{\theta_n}|=O(1)$, we have
$$
|\bs{R_N}|= 
O(N^{-1}\epsilon_{N-1}^{-1})+
O\left(N^{-1}\sum_{n=1}^{N-1}|\epsilon_{n-1}^{-1}-\epsilon_n^{-1}|
\right)
$$ Note that $\sum_{n=1}^{N-1}|\epsilon_{n-1}^{-1}-\epsilon_n^{-1}|=
\epsilon_0^{-1}-\epsilon_{N-1}^{-1}$ when $(\epsilon_n)$ is decreasing
and so the last term in the above expression is
$O(N^{-1}\epsilon_{N-1}^{-1})$. Therefore, since $\epsilon<1$ by
assumption, we have $|\bs{R_N}|=O(N^{-(1-\epsilon)})\to 0$.
\\ \indent Regarding the last statement of the lemma, we observe that,
from what we have proven before, we obtain
$N^eE\big[|\bs{R_N}|\big]=O(N^{e-(1-\epsilon)})=O(N^{\delta-1/2})\to
0$ when $\delta<1/2$. However, in the considered cases 1) and 2), we 
might have $\delta\geq 1/2$. Therefore, we need other arguments in
order to prove the last statement. To this purpose, we observe that,
by Lemma \ref{lemma-pre}, we have $E[\,|\bs{\theta_n}|\,]=
O(n^{\epsilon/2-\delta})$ and so, by \eqref{def-Rn}, we have
\begin{equation*}
\begin{split}
N^eE\big[\,|\bs{R_N}|\,\big]&=O(N^{-(1-e)}N^{3\epsilon/2-\delta})+
O\left(\frac{1}{N^{1-e}}\sum_{n=1}^{N-1}
|\epsilon_{n-1}^{-1}-\epsilon_n^{-1}| n^{\epsilon/2-\delta}\right)
\\
&=O(N^{-(1-\epsilon)/2})+ O\left(\frac{1}{N^{1-e}}\sum_{n=1}^{N-1}
|\epsilon_{n-1}^{-1}-\epsilon_n^{-1}| n^{\epsilon/2-\delta}\right)\,.
\end{split}
\end{equation*}
Moreover, we have 
$$
\sum_{n=1}^{N-1}
  |\epsilon_{n-1}^{-1}-\epsilon_n^{-1}| n^{\epsilon/2-\delta}=
\sum_{n=1}^{N-1}\left[(n-1)^{\epsilon}-n^{\epsilon}\right] n^{\epsilon/2-\delta}=
\sum_{n=1}^{N-1} n^{\epsilon-1+\epsilon/2-\delta}
\sim N^{3\epsilon/2-\delta}
=
o(N^{1-e})\,,
$$ because $e=1/2-(\epsilon-\delta)$ and $\epsilon<1$. Summing up, we
have $N^eE[|\bs{R_N}|]=O(N^{-(1-\epsilon)/2})+o(1)\to 0$.
\end{proof}

\begin{lemma}\label{lemma-conv-medie}
Under the same assumptions of Theorem \ref{CLT}, we have
$\bs{\overline{\theta}_{N-1}}\stackrel{a.s.}\longrightarrow \bs{0}$,
that is $\bs{\overline{\psi}_{N-1}}\stackrel{a.s.}\longrightarrow
\bs{p_0}$. In particular, when $\epsilon\in (1/2,1)$ and $\delta\in
(1/2,\epsilon)$, we have
$\bs{\theta_N}\stackrel{a.s.}\longrightarrow\bs{0}$, that is
$\bs{\psi_N}\stackrel{a.s.}\longrightarrow\bs{p_0}$.
\end{lemma}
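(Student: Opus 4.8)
The plan is to prove the two assertions separately, reducing each to a convergence statement for a suitable martingale. The conceptual point to keep in mind is that in the full range $\delta\in(\epsilon/2,\epsilon)$ one cannot hope for pointwise convergence of $\bs{\theta_n}$ (when $\delta\le 1/2$ the noise is not summable and $\bs{\theta_n}$ keeps fluctuating); only the Ces\`aro average converges, and this is exactly why the first assertion must be handled through the averaging decomposition of Lemma~\ref{lemma-decompo} rather than through direct convergence.

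For the first assertion I would start from Lemma~\ref{lemma-decompo}: since $\bs{R_N}\to\bs{0}$ almost surely, it suffices to show that the martingale average
$$\frac{1}{N}\bs{S_N}:=\frac{1}{N}\sum_{n=0}^{N-1}\frac{\delta_n}{\epsilon_n}\Delta\bs{M_{n+1}}$$
converges to $\bs{0}$ almost surely. Here $\bs{S_N}$ is a genuine martingale because $\delta_n/\epsilon_n$ is deterministic and $E[\Delta\bs{M_{n+1}}|\mathcal{F}_n]=\bs{0}$. The natural tool is the martingale strong law, which I would obtain via Kronecker's lemma applied to the series $\sum_n \frac{1}{n+1}\frac{\delta_n}{\epsilon_n}\Delta\bs{M_{n+1}}$: this series is an $L^2$-martingale whose $n$-th increment has second moment $\frac{1}{(n+1)^2}\frac{\delta_n^2}{\epsilon_n^2}E[\|\Delta\bs{M_{n+1}}\|^2]$, and since $\|\Delta\bs{M_{n+1}}\|$ is uniformly bounded and $\delta_n^2/\epsilon_n^2\sim c^2(n+1)^{2(\epsilon-\delta)}$, the total second moment behaves like $\sum_n (n+1)^{2(\epsilon-\delta)-2}$, which converges precisely because $\epsilon-\delta<\epsilon/2<1/2$. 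Hence the series converges almost surely, Kronecker's lemma gives $\frac1N\bs{S_N}\to\bs{0}$, and combining with $\bs{R_N}\to\bs{0}$ yields $\bs{\overline{\theta}_{N-1}}\to\bs{0}$; the equivalent form $\bs{\overline{\psi}_{N-1}}\to\bs{p_0}$ is immediate since $\bs{\overline{\theta}_{N-1}}=\bs{\overline{\psi}_{N-1}}-\bs{p_0}$.

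For the second assertion I would exploit the stronger hypotheses $\epsilon\in(1/2,1)$ and $\delta\in(1/2,\epsilon)$, whose real content is that $\delta>1/2$ forces $\sum_n\delta_n^2<+\infty$ (the constraint $\epsilon>1/2$ is only what makes room for such a $\delta$ in the admissible interval $(\epsilon/2,\epsilon)$). I would apply the Robbins--Siegmund almost-supermartingale lemma to $V_n=\|\bs{\theta_n}\|^2$. Rewriting the conditional identity \eqref{eq-cond} as
$$E[\|\bs{\theta_{n+1}}\|^2|\mathcal{F}_n]=\|\bs{\theta_n}\|^2-(2\epsilon_n-\epsilon_n^2)\|\bs{\theta_n}\|^2+\delta_n^2 E[\|\Delta\bs{M_{n+1}}\|^2|\mathcal{F}_n]$$
exhibits the Robbins--Siegmund structure with nonnegative decrement $(2\epsilon_n-\epsilon_n^2)\|\bs{\theta_n}\|^2$ (note $\epsilon_n\le 1$) and summable remainder $\delta_n^2 E[\|\Delta\bs{M_{n+1}}\|^2|\mathcal{F}_n]=O(\delta_n^2)$. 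The lemma then gives almost sure convergence of $\|\bs{\theta_n}\|^2$ to a finite limit and $\sum_n(2\epsilon_n-\epsilon_n^2)\|\bs{\theta_n}\|^2<+\infty$ almost surely.

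To close, I would use $2\epsilon_n-\epsilon_n^2\sim 2\epsilon_n$ together with $\sum_n\epsilon_n=+\infty$ (which holds since $\epsilon<1$): the finiteness of $\sum_n(2\epsilon_n-\epsilon_n^2)\|\bs{\theta_n}\|^2$ forces $\liminf_n\|\bs{\theta_n}\|^2=0$, and since $\|\bs{\theta_n}\|^2$ converges, its limit must be $0$, i.e.\ $\bs{\theta_N}\to\bs{0}$, equivalently $\bs{\psi_N}\to\bs{p_0}$, almost surely. I expect the Robbins--Siegmund bookkeeping to be routine; the genuine subtlety, and the main obstacle, is in the first assertion, where the whole argument hinges on matching the growth $\delta_n/\epsilon_n\sim c(n+1)^{\epsilon-\delta}$ against the $1/N$ averaging so that the summability threshold $\epsilon-\delta<1/2$ is met --- this is precisely the content of the hypothesis $\delta>\epsilon/2$ and of $e=1/2-(\epsilon-\delta)>0$, and it is what allows the martingale law of large numbers to apply uniformly over the admissible range.
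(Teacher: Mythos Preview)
Your proof is correct and uses the same basic ingredients as the paper (the decomposition of Lemma~\ref{lemma-decompo} plus an $L^2$-martingale/Kronecker argument for the average, and an almost-supermartingale argument for the pointwise part), but the organization differs in two places worth noting.

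First, you treat the convergence of $\bs{\overline{\theta}_{N-1}}$ \emph{uniformly} over the whole parameter range via Lemma~\ref{lemma-decompo} and Kronecker; the paper instead splits into two cases and, when $\epsilon,\delta>1/2$, first proves $\bs{\theta_N}\to\bs{0}$ and then deduces the Ces\`aro convergence from that. Your route is more economical: the summability check $\sum_n (n+1)^{2(\epsilon-\delta)-2}<\infty$ holds on the entire range $\delta>\epsilon/2$, so there is no need to separate cases. Second, in identifying the almost-sure limit of $\|\bs{\theta_n}\|^2$ as $0$, you use the Robbins--Siegmund output $\sum_n(2\epsilon_n-\epsilon_n^2)\|\bs{\theta_n}\|^2<\infty$ together with $\sum_n\epsilon_n=+\infty$, whereas the paper uses the $L^2$ estimate $E[\|\bs{\theta_n}\|^2]=O(n^{\epsilon-2\delta})\to 0$ from Lemma~\ref{lemma-pre}. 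Your argument is self-contained (it does not need Lemma~\ref{lemma-pre}) and exploits the drift term; the paper's is shorter once Lemma~\ref{lemma-pre} is in hand. Both are valid.
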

\begin{proof} Let us distinguish the following two cases:
\begin{itemize}
\item[1)] $\epsilon\in (1/2,1)$ and $\delta\in (1/2,\epsilon)$ or 
\item[2)] $\epsilon\in (0,1)$ and $\delta\in
  (\epsilon/2,\min\{\epsilon,1/2\}]\setminus\{\epsilon\}$.
\end{itemize}
For the case 1), we observe that, by \eqref{eq-cond}, we have 
$$
E[\|\bs{\theta_{n+1}}\|^2|\mathcal{F}_n]
\leq (1+\epsilon_n^2)E[\|\bs{\theta_n}\|^2|\mathcal{F}_n]
+\delta_n^2E[\|\Delta\bs{ M_{n+1}}\|^2|\mathcal{F}_n].
$$ Therefore, since $(\Delta\bs{M_{n+1}})_n$ is uniformly bounded and,
in case 1), we have $\sum_n\epsilon_n^2<+\infty$ and
$\sum_n\delta_n^2<+\infty$, the sequence $(\|\bs{\theta_n}\|^2)_n$ is
a bounded non-negative almost supermartingale. As a consequence, it
converges almost surely to a certain random variable. This limit
random variable is necessarily equal to $\bs{0}$ because, by Lemma
\ref{lemma-pre}, we have
$E[\|\bs{\theta_n}\|^2]=O(n^{\epsilon-2\delta})\to 0$. Hence, we have
the almost sure convergence of $\bs{\theta_N}$ to $\bs{0}$ and,
consequently, the almost sure convergence of
$\bs{\overline{\theta}_{N-1}}$ to $\bs{0}$ follows by Lemma
\ref{lemma-serie-rv} and Remark \ref{case-as-conv} 
(with $c_n=n$ and $v_{N,n}=n/N$), because
$E[\bs{\theta_{n-1}}|\mathcal{F}_{n-2}]=(1-\epsilon_{n-2})\bs{\theta_{n-2}}\to
\bs{0}$ almost surely.  \\ \indent For the case 2), we use Lemma
\ref{lemma-decompo}, that gives the decomposition
\eqref{eq-media-theta}, with
$\bs{R_N}\stackrel{a.s.}\longrightarrow\bs{0}$. Indeed, by this
decomposition, it is enough to prove that the term
$\sum_{n=0}^{N-1}\frac{\delta_n}{\epsilon_n}\Delta\bs{M_{n+1}}/N$
converges almost surely to $\bs{0}$. To this purpose, we observe that,
if we set
$$
\bs{L_n}=
\sum_{j=1}^{n}\frac{1}{j}\frac{\delta_{j-1}}{\epsilon_{j-1}}\Delta\bs{M_{j}},
$$ then $(\bs{L_n})$ is a square integrable martingale. Indeed, we
have
$$ \sum_{n=1}^{+\infty}\frac{1}{n^{2}}\frac{\delta_{n-1}^2}{\epsilon_{n-1}^2}
E[\|\Delta\bs{M_{n}}\|^2] =O\left(\sum_{n=1}^{+\infty}
\frac{1}{n^{1+2e}}\right)<+\infty\,.
$$ Therefore, $(\bs{L_n})$ converges almost surely, that
is we have
$\sum_{n}\frac{1}{n}\frac{\delta_{n-1}}{\epsilon_{n-1}}\Delta\bs{M_{n}}<+\infty$
almost surely.  Applying Lemma \ref{gen-kro-lemma} 
(with $v_{N,n}=n/N$), we find
$$
  \frac{1}{N}\sum_{n=0}^{N-1}\frac{\delta_n}{\epsilon_n}\Delta\bs{M_{n+1}}
  =\sum_{n=1}^N v_{N,n}
  \frac{1}{n}\frac{\delta_{n-1}}{\epsilon_{n-1}}\Delta\bs{M_{n}}
  \stackrel{a.s.}\longrightarrow \bs{0}
$$ 
and so $\bs{\overline{\theta}_{N-1}}\stackrel{a.s.}\longrightarrow \bs{0}$.  
\end{proof}

\noindent {\bf Proof of Theorem \ref{CLT}.}  Set $e=1/2-(\epsilon-\delta)\in
(0,1/2)$ and $\lambda=c^2/[2(1-e)]=c^2/[1+2(\epsilon-\delta)]$. 
Moreover, let us distinguish the following two cases:
\begin{itemize}
\item[1)] $\epsilon\in (1/2,1)$ and $\delta\in (1/2,\epsilon)$ or 
\item[2)] $\epsilon\in (0,1)$ and $\delta\in
  (\epsilon/2,\min\{\epsilon,1/2\}]\setminus\{\epsilon\}$.
\end{itemize}
\noindent {\em Almost sure convergence:} In case 1), by Lemma
\ref{lemma-conv-medie}, $\bs{\psi_N}$ converges almost surely to
$\bs{p_0}$. Therefore, the almost sure convergence of
$\bs{\overline{\xi}_N}$ to $\bs{p_0}$ follows by Lemma
\ref{lemma-serie-rv} and Remark \ref{case-as-conv} 
(with $c_n=n$ and $v_{N,n}=n/N$), because
$E[\bs{\xi_{n+1}}|\mathcal{F}_n]=\bs{\psi_n}\to \bs{p_0}$ almost
surely and $\sum_{n} E[\|\bs{\xi_n}\|^2 ]n^{-2}\leq \sum_{n}
n^{-2}<+\infty$.  \\ \indent In case 2), we use a different
argument. Take $\gamma\in [0,e)$ and set
$$
\bs{L_n}=\sum_{j=1}^{n}\frac{1}{j^{1-\gamma}}\frac{\delta_{j-1}}{\epsilon_{j-1}}
\Delta\bs{M_{j}}\,.
$$
Then $(\bs{L_n})$ is a square integrable martingale, because we have 
$$ \sum_{n=1}^{+\infty}\frac{1}{n^{2-2\gamma}}\frac{\delta_{n-1}^2}{\epsilon_{n-1}^2}
E[\|\Delta\bs{M_{n}}\|^2] =O\left(\sum_{n=1}^{+\infty}
\frac{1}{n^{1+2e-2\gamma}}\right)<+\infty\,.
$$ Therefore, $(\bs{L_n})$ converges almost surely, that is we have
$\sum_{n}\frac{1}{n^{1-\gamma}}\frac{\delta_{n-1}}{\epsilon_{n-1}}
\Delta\bs{M_{n}}<+\infty$ almost surely. By Lemma \ref{gen-kro-lemma} 
(with $v_{N,n}=(n/N)^{1-\gamma}\epsilon_{n-1}/\delta_{n-1}\sim
n^{1-\gamma-\epsilon+\delta}/N^{1-\gamma}$), we get
$$
\frac{1}{N^{1-\gamma}}
\sum_{n=0}^{N-1}\Delta\bs{M_{n+1}}=
\sum_{n=1}^{N} v_{N,n}\frac{1}{n^{1-\gamma}}\frac{\delta_{n-1}}{\epsilon_{n-1}}
\Delta\bs{M_{n}}
\stackrel{a.s.}\longrightarrow\bs{0}.
$$ 
Therefore, we have
$$
N^{\gamma}\left(\bs{\overline{\xi}_N}-\bs{\overline{\psi}_{N-1}}\right)=
\frac{1}{N^{1-\gamma}}
\sum_{n=0}^{N-1}\Delta\bs{M_{n+1}}\stackrel{a.s.}\longrightarrow\bs{0},
$$ that is
$\left(\bs{\overline{\xi}_N}-\bs{\overline{\psi}_{N-1}}\right)=o(N^{-\gamma})$
for each $\gamma\in [0,e)$. Recalling Lemma
\ref{lemma-conv-medie}, we obtain in particular that
$\bs{\overline{\xi}_N}$ converges almost surely to $\bs{p_0}$.
\\
\noindent{\em Second order asymptotic behaviour:} We have
\begin{equation}\label{eq-finale}
N^{e}\left(\bs{\overline{\xi}_N}-\bs{p_0}\right)=
N^{e}\bs{\overline{\mu}_{N}}=
N^{e-1/2}\sqrt{N}\left(\bs{\overline{\mu}_N}-\bs{\overline{\theta}_{N-1}}\right)+
N^{e}\bs{\overline{\theta}_{N-1}}\,.
\end{equation}
Moreover, by Lemma \ref{lemma-pre}, we have
\begin{equation*}
\begin{split}
\frac{1}{N}\sum_{n=0}^{N-1}E[|\bs{\theta_{n}}|]
&=O(N^{-1}\sum_{n=1}^{N} n^{\epsilon/2-\delta})
=O(N^{-1-\delta+\epsilon/2+1})=O(N^{\epsilon/2-\delta})\to 0\,,
\\
\frac{1}{N}\sum_{n=0}^{N-1}E[\|\bs{\theta_{n}}\|^2]
&=O(N^{-1}\sum_{n=1}^{N} n^{\epsilon-2\delta})
=O(N^{-1-2\delta+\epsilon+1})=O(N^{\epsilon-2\delta})\to 0\,,
\end{split}
\end{equation*}
and so Theorem \ref{preliminary-CLT} holds true with $V=\Gamma$ (see
Remark \ref{remark-preliminary-CLT}).  Therefore, the first term in
the right side of \eqref{eq-finale} converges in probability to
$\bs{0}$ because $e<1/2$. Hence, if we prove that
\begin{equation}\label{eq-finale-2}
N^{e}\bs{\overline{\theta}_{N-1}}\stackrel{s}\longrightarrow 
\mathcal{N}\left(\bs{0}, \lambda \Gamma \right)\,,
\end{equation}
then the proof is concluded.  \\ \indent In order to prove
\eqref{eq-finale-2}, we observe that, by decomposition
\eqref{eq-media-theta} in Lemma \ref{lemma-decompo}, we have
\begin{equation*}
N^e\bs{\overline{\theta}_{N-1}}=\sum_{n=1}^N\bs{Y_{N,n}}+N^e\bs{R_N}\,,
\end{equation*}
where
$\bs{Y_{N,n}}=\frac{1}{N^{1-e}}\frac{\delta_{n-1}}{\epsilon_{n-1}}\Delta\bs{M_n}
$ and $N^e\bs{R_N}$ converges in probability to $\bs{0}$ (because
$N^eE\big[|\bs{R_N}|]\to 0$). Therefore, it is enough to prove that
the term $\sum_{n=1}^N\bs{Y_{N,n}}$ stably converges to the Gaussian
kernel $\mathcal{N}(0,\lambda\Gamma)$, with
$\lambda=c^2/[2(1-e)]=c^2/[1+2(\epsilon-\delta)]$. To this purpose, we
observe that $E[\bs{Y_{N,n}}|\mathcal{F}_{n-1}]=\bs{0}$ and so
$\sum_{n=1}^N\bs{Y_{N,n}}$ converges stably to
$\mathcal{N}(\bs{0},\lambda\Gamma)$ if the conditions (c1) and (c2) of
Theorem \ref{stable-conv},  
with $V=\lambda\Gamma$, hold
true.  Regarding (c1), we note that $\delta_{n-1}/\epsilon_{n-1}\sim c
n^{\epsilon-\delta}=c n^{1/2-e}$ and so we have
$$
\max_{1\leq n\leq
  N} |\bs{Y_{N,n}} | \leq N^{-(1-e)}\max_{1\leq n\leq N}
\frac{\delta_{n-1}}{\epsilon_{n-1}}|\bs{\xi_{n}} -{\bs{\psi_{n-1}}}|
\leq N^{-(1-e)}\max_{1\leq n\leq N}
\frac{\delta_{n-1}}{\epsilon_{n-1}}=O(N^{-1/2})\to 0\,.
$$ 
Condition (c2) means 
\begin{equation}\label{c2}
\frac{1}{N^{2(1-e)}}\sum_{n=1}^N
\frac{\delta_{n-1}^2}{\epsilon_{n-1}^2}
(\bs{\xi_{n}}-{\bs{\psi_{n-1}}})(\bs{\xi_{n}}-{\bs{\psi_{n-1}}})^\top
\stackrel{P}{\longrightarrow}\lambda \Gamma.
\end{equation}
We note that
$N^{-2(1-e)}\sum_{n=1}^{N}\delta_{n-1}^2/\epsilon_{n-1}^2\to \lambda$, 
because $\delta_{n-1}^2/\epsilon_{n-1}^2\sim c^2 n^{1-2e}$, and
$$
E[
(\bs{\xi_{n}}-{\bs{\psi_{n-1}}})(\bs{\xi_{n}}-{\bs{\psi_{n-1}}})^\top
|\mathcal{F}_{n-1}]=
\mathrm{diag} ({\bs{\psi_{n-1}}}) - {\bs{\psi_{n-1}}}
{\bs{\psi_{n-1}}^\top}\,.
$$ Therefore, in case 1), condition \eqref{c2} immediately follows by
the almost sure convergence of $\bs{\psi_n}$ to $\bs{p_0}$. It is
enough to apply Lemma \ref{lemma-serie-rv} and Remark
\ref{case-as-conv} 
with $c_n=n$ and
$v_{N,n}=n\delta_{n-1}^2/(N^{2(1-e)}\epsilon_{n-1}^2)\sim c^2
n^{1+2(\epsilon-\delta)}/N^{2-2e}=c^2(n/N)^{2(1-e)}$.  In case 2), we
apply again Lemma \ref{lemma-serie-rv} with the above $c_n$ and
$v_{N,n}$, but we note that $\bs{\psi_n}=\bs{\theta_n}+\bs{p_0}$ and
so condition \eqref{conv-medie-cond} in Lemma \ref{lemma-serie-rv},
with $V=\lambda\Gamma$, is equivalent to
\begin{equation*}
\frac{1}{N^{2-2e}}\sum_{n=0}^{N-1}\frac{\delta_n^2}{\epsilon_n^2}
\bs{\theta_{n}}\stackrel{P}\longrightarrow\bs{0}
\qquad\mbox{and}\qquad
\frac{1}{N^{2-2e}}\sum_{n=0}^{N-1}\frac{\delta_n^2}{\epsilon_n^2}
\bs{\theta_{n}}\bs{\theta_{n}}^{\top}
\stackrel{P}\longrightarrow 0_{k\times k}.
\end{equation*}
These two convergences hold true because, by Lemma \ref{lemma-pre}, we have 
\begin{equation*}
\begin{split}
\frac{1}{N^{2-2e}}\sum_{n=0}^{N-1}\frac{\delta_n^2}{\epsilon_n^2}E[|\bs{\theta_n}|]
&=O(N^{-2+2e}\sum_{n=1}^{N} n^{-2\delta+2\epsilon-\delta+\epsilon/2})
=O(N^{-2+2e-3\delta+5\epsilon/2+1})=O(N^{-\delta+\epsilon/2})\to 0\,,
\\
\frac{1}{N^{2-2e}}\sum_{n=0}^{N-1}\frac{\delta_n^2}{\epsilon_n^2}
E[\|\bs{\theta_n}\|^2]&=
O(N^{-2+2e}\sum_{n=1}^N n^{-2\delta+2\epsilon-2\delta+\epsilon})
=O(N^{-2+2e-4\delta+3\epsilon+1})=O(N^{-2\delta+\epsilon})\to 0\,.
\end{split}
\end{equation*}
Therefore, in both cases 1) and 2), conditions c1) and c2) of Theorem 
\ref{stable-conv} 
are satisfied and so
$\sum_{n=1}^N\bs{Y_{N,n}}$ stably converges to the Gaussian kernel
$\mathcal{N}(0,\lambda\Gamma)$.  \qed

%
%
%
%

\noindent{\bf Declaration}\\

\noindent Both authors equally contributed to this work.  
\\

\noindent{\bf Acknowledgments}\\

\noindent Giacomo Aletti is a member of the Italian Group ``Gruppo
Nazionale per il Calcolo Scientifico'' of the Italian Institute
``Istituto Nazionale di Alta Matematica'' and Irene Crimaldi is a
member of the Italian Group ``Gruppo Nazionale per l'Analisi
Matematica, la Pro\-ba\-bi\-li\-t\`a e le loro Applicazioni'' of the
Italian Institute ``Istituto Nazionale di Alta Ma\-te\-ma\-ti\-ca''.
\\

\noindent{\bf Funding Sources}\\

\noindent Irene Crimaldi is partially supported by the Italian
``Programma di Attivit\`a Integrata'' (PAI), project ``TOol for
Fighting FakEs'' (TOFFE) funded by IMT School for Advanced Studies
Lucca.

\pagebreak
\appendix

\addtocontents{toc}{\vfill\eject\noindent\hrulefill\par}
\addtocontents{toc}{~\hfill\textbf{Page}\par}

\section*{SM Supplemental Materials}
\addcontentsline{toc}{section}{SM Supplemental Materials}

\setcounter{equation}{0}
\setcounter{figure}{0}
\setcounter{table}{0}
\setcounter{page}{1}
\setcounter{page}{1}
\makeatletter

\renewcommand{\refname}{SR References}
\renewcommand{\thefigure}{S\arabic{figure}}

\newcommand*\mref[1]{%
    \ref{#1}}

\newcommand*\meqref[1]{%
    (\ref{#1})}

\def\thepage{\underline{S\,\arabic{page}}}
\def\theequation{S\arabic{section}.\arabic{equation}}
\def\thesection{S\arabic{section}}
\def\theremark{S\arabic{section}.\arabic{remark}}
\def\thetheorem{S\arabic{section}.\arabic{theorem}}
\def\thelemma{S\arabic{section}.\arabic{lemma}}
\def\theexample{S\arabic{section}.\arabic{example}}
\makeatletter
\renewcommand\@bibitem[1]{\item\if@filesw \immediate\write\@auxout
    {\string\bibcite{#1}{S\the\value{\@listctr}}}\fi\ignorespaces}
\def\@biblabel#1{[S#1]}
\makeatother

In this document we collect some proofs, complements, technical
results and recalls, useful for \cite{S:AleCri_new20}.  
Therefore, the notation and the assumptions used here are the same as those used in that paper.

\section{Proofs and intermediate results}\label{sec-proofs}

We here collect some proofs omitted in the main text of the paper
\cite{S:AleCri_new20}.

\subsection{Proof of Theorem~\ref{th-chi-squared-test}}
\label{proof-chi-squared-test}
The proof is based on Proposition~\ref{CLT-stesso-ordine} (for case a))
and Theorem~\ref{CLT} (for case b)).  
The almost sure convergence of $O_i/N$ immediately
follows since $O_i/N=\overline{\xi}_{N\,i}$. In order to prove the
stated convergence in distribution, we mimic the classical proof for
the Pearson chi-squared test based on the Sherman Morison formula (see
\cite{S:SM50}), but see also \cite[Corollary~2]{S:RaoScott81}.  

We start recalling the Sherman Morison formula: if $A$ is an
invertible square matrix and we have $ 1 - \bs{v}^\top A^{-1} \bs{u}
\neq 0 $, then
\[
(A - \bs{u}\bs{v}^\top)^{-1} =  
A^{-1} + \frac{A^{-1} \bs{u}\bs{v}^\top A^{-1}}{1 - \bs{v}^\top A^{-1} \bs{u} }.
\]
Given the observation
$\bs{\xi_{n}}=(\xi_{n\,1},\dots,\xi_{n\,k})^{\top}$, we define the
``truncated'' vector $\bs{\xi^*_{n}}= (\xi^*_{n\, 1}, \ldots,
\xi^*_{n\, k-1})^\top$, given by the first $k-1$ components of
$\bs{\xi_{n}}$. Proposition~\ref{CLT-stesso-ordine} (for case a)) and
Theorem~\ref{CLT} (for case b)) give the second order
asymptotic behaviour of $(\bs{\xi_{n}})$, that immediately implies
\begin{equation}\label{eq:truncCLT}
N^e\left(\bs{\overline{\xi}^*_{N}}-\bs{p^*}\right)
=
\frac{\sum_{n=1}^N (\bs{\xi^*_n}  - \bs{p^*}) }{N^{1-e}} 
\mathop{\longrightarrow}^{d} 
\mathcal{N} (\bs{0},\Gamma_*),
\end{equation}
where $\bs{p^*}$ is given by the first $k-1$ components of $\bs{p_0}$
and $ \Gamma_*= \lambda ( \mathrm{diag} (\bs{p^*}) -
\bs{p^*}\bs{p^*}^T)$.  By assumption $p_{0\,i}>0$ for all
$i=1,\dots,k$ and so $\mathrm{diag} (\bs{p^*})$ is invertible with
inverse $\mathrm{diag} (\bs{p^*})^{-1} = \mathrm{diag}
(\frac{1}{p_{0\,1}}, \ldots, \frac{1}{p_{0\,k-1}} )$ and, since $
(\mathrm{diag} (\bs{p^*})^{-1} ) \bs{p^*} =
\bs{1}\in\mathbb{\R}^{k-1}$, we have
\[
1 -  \bs{p^*}^T \mathrm{diag} (\bs{p^*})^{-1} \bs{p^*} = 
1-\sum_{i=1}^{k-1}p_{0\,i}=
\sum_{i=1}^k p_{0\,i} - \sum_{i=1}^{k-1} p_{0\,i} = p_{0\,k} >0.
\]
Therefore we can use the Sherman Morison formula with $A =
\mathrm{diag} (\bs{p^*})$ and $\bs{u}= \bs{v} = \bs{p^*}$, and we
obtain
\begin{equation}\label{eq:SMformula}
(\Gamma_*)^{-1} = \frac{1}{\lambda} 
( \mathrm{diag} (\bs{p^*}) - \bs{p^*}\bs{p^*}^T)^{-1}
=
\frac{1}{\lambda} 
\Big( \mathrm{diag} (\tfrac{1}{p_{0\,1}}, \ldots, \tfrac{1}{p_{0\,k-1}} ) 
+ \frac{1}{p_{0\,k}} \bs{1}\bs{1}^\top \Big).
\end{equation}
Now, since $\sum_{i=1}^{k} (\overline{\xi}_{N\,i} - {p_{0\,i}}) = 0$, 
then $\overline{\xi}_{N\,k} - {p_{0\,k}} = \sum_{i=1}^{k-1}
(\overline{\xi}_{N\,i} - {p_{0\,i}})$ and so we get
\[
\begin{aligned}
\sum_{i=1}^{k} \frac{(O_{i} - N{p_{0\,i}})^2}{N{p_{0\,i}}} & = 
N \sum_{i=1}^{k} \frac{(\overline{\xi}_{N\,i} - {p_{0\,i}})^2}{{p_{0\,i}}} 
= 
N \Big[ 
\sum_{i=1}^{k-1} \frac{(\overline{\xi}_{N\,i} - {p_{0\,i}})^2}{{p_{0\,i}}} + 
\frac{(\overline{\xi}_{N\,k} - {p_{0\,k}})^2}{{p_{0\,k}}} \Big]
\\
& = 
N \Big[ \sum_{i=1}^{k-1} \frac{(\overline{\xi}_{N\,i} - {p_{0\,i}})^2}{{p_{0\,i}}} + 
\frac{(\sum_{i=1}^{k-1} (\overline{\xi}_{N\,i} - {p_{0\,i}}) )^2}{{p_{0\,k}}} 
\Big]
\\
& = 
N \sum_{i_1,i_2=1}^{k-1} (\overline{\xi}_{N\,{i_1}} - {p_{0\,i_1}}) 
(\overline{\xi}_{N\,i_2} - {p_{0\,i_2}}) 
\Big( I_{i_1,i_2}\frac{1}{{p_{0\,i_1}}} + 
\frac{1}{{p_{0\,k}}} \Big),
\end{aligned}
\]
where $I_{i_1\,i_2}$ is equal to $1$ if $i_1=i_2$ and equal to
zero otherwise.  Finally, from the above equalities, recalling
\eqref{eq:truncCLT} and \eqref{eq:SMformula}, we obtain
\begin{equation*}
\frac{1}{N^{1-2e}}\sum_{i=1}^{k} \frac{(O_{i} - N{p_{0\,i}})^2}{N{p_{0\,i}}} =
\lambda 
N^{2e} (\bs{\overline{\xi}^*_N}  - \bs{p^*})^\top 
(\Gamma_*)^{-1} (\bs{\overline{\xi}^*_N}  - \bs{p^*})
\mathop{\longrightarrow}\limits^{d}
\lambda W_0=W_*,
\end{equation*}
where $1-2e\geq 0$ and $W_0$ is a random variable with distribution
$\chi^2(k-1)=\Gamma((k-1)/2,1/2)$, where $\Gamma(a,b)$ denotes the
Gamma distribution with density function
$$
f(w)=\frac{b^a}{\Gamma(a)}w^{a-1}e^{-bw}.
$$
As a consequence, $W_*$ has distribution $\Gamma((k-1)/2,1/(2\lambda))$.


\subsection{A preliminary central limit theorem}
The following preliminary central limit theorem is useful for the
proofs of the other central limit theorems stated in \cite{S:AleCri_new20} and in 
Section \ref{sec-compl}.

\begin{theorem}\label{preliminary-CLT}
If 
\begin{equation}\label{ipotesi-base}
\frac{1}{N}\sum_{n=1}^N \mathrm{diag} ({\bs{\psi_{n-1}}}) - {\bs{\psi_{n-1}}}
{\bs{\psi_{n-1}}^\top}
\mathop{\longrightarrow}\limits^{P} 
V\,,
\end{equation}
where $V$ is a random variable with values in the space of positive
semidefinite $k\times k$-matrices, then 
$$
\sqrt{N}\left(\bs{\overline{\mu}_N}-\bs{\overline{\theta}_{N-1}}\right)=
\sqrt{N}\left(\bs{\overline{\xi}_N}-\bs{\overline{\psi}_{N-1}}\right)
\stackrel{s}\longrightarrow \mathcal{N}\left(\bs{0},V\right).
$$
\end{theorem}

\begin{proof} 
We can write
\begin{equation*}
\begin{split}
\sqrt{N}\left(\bs{\overline{\xi}_N}-\bs{\overline{\psi}_{N-1}}\right)
&=\frac{1}{\sqrt{N}} 
N\left(\bs{\overline{\xi}_N}-\bs{\overline{\psi}_{N-1}}\right)
=\frac{1}{\sqrt{N}} \sum_{n=1}^N(\bs{\xi_n}-\bs{\psi_{n-1}})
\\
&=\frac{1}{\sqrt{N}}\sum_{n=1}^N\Delta\bs{M_n}=\sum_{n=1}^N Y_{N,n},
\end{split}
\end{equation*}
with $\bs{Y_{N,n}}=N^{-1/2}\Delta\bs{M_n}$.  For the convergence of
$\sum_{n=1}^N\bs{Y_{N,n}}$, we observe that
$E[\bs{Y_{N,k}}|\mathcal{F}_{k-1}]=\bs{0}$ and so, by Theorem
\ref{stable-conv},  
it converges stably to
$\mathcal{N}(\bs{0},V)$ if the conditions (c1) and (c2) hold true.
Regarding (c1), we note that $\max_{1\leq n\leq N} |\bs{Y_{N,n}} |
\leq \frac{1}{\sqrt{N}}\max_{1\leq n\leq N} |\bs{\xi_{n}}
-{\bs{\psi_{n-1}}}|=O(1/\sqrt{N}) \to 0$.  Condition (c2)
means
$$
\sum_{n=1}^N \bs{Y_{N,n}} {\bs{Y}_{\bs{N,n}}^\top}
=
\frac{1}{N}\sum_{n=1}^N
(\bs{\xi_{n}}-{\bs{\psi_{n-1}}})(\bs{\xi_{n}}-{\bs{\psi_{n-1}}})^\top
\stackrel{P}\longrightarrow V.
$$ The above convergence holds true by Assumption \eqref{ipotesi-base}
and Lemma \ref{lemma-serie-rv} (with $c_n=n$ and $v_{N,n}=n/N$). 
Indeed, we have $\sum_{n\geq 1}
E[\|\bs{\xi_{n}}-{\bs{\psi_{n-1}}}\|^2]/n^2\leq \sum_{n\geq 1}
n^{-2}<+\infty$ and
$$
E[
(\bs{\xi_{n}}-{\bs{\psi_{n-1}}})(\bs{\xi_{n}}-{\bs{\psi_{n-1}}})^\top
|\mathcal{F}_{n-1}]=
\mathrm{diag} ({\bs{\psi_{n-1}}}) - {\bs{\psi_{n-1}}}
{\bs{\psi_{n-1}}^\top}\,.
$$
\end{proof}

\begin{remark}\label{remark-preliminary-CLT}
\rm Recalling that $\bs{\psi_n}=\bs{\theta_n}+\bs{p_0}$, the
convergence \eqref{ipotesi-base} with $V=\Gamma=\mathrm{diag}
({\bs{p_0}}) - {\bs{p_0}}{\bs{p_0}^\top}$, means
\begin{equation*}
\bs{\overline{\theta}_{N-1}}=
\frac{1}{N}\sum_{n=1}^N\bs{\theta_{n-1}}\stackrel{P}\longrightarrow\bs{0}
\qquad\mbox{and}\qquad
\frac{1}{N}\sum_{n=1}^N\bs{\theta_{n-1}}\bs{\theta_{n-1}}^{\top}
\stackrel{P}\longrightarrow 0_{k\times k}\,,
\end{equation*}
 where $0_{k\times k}$ is the null matrix with dimension $k\times
k$.
\end{remark}

\subsection{Proof of Proposition \ref{CLT-stesso-ordine}}
\label{proof-CLT-stesso-ordine}
By Lemma \ref{lemma-serie-rv} (with $c_n=n$ and $v_{N,n}=n/N$), Remark
\ref{case-as-conv} and Theorem \ref{SA-conv}, 
we
immediately get $\bs{\overline{\xi}_N}\to\bs{p_0}$ almost
surely. Indeed, we have
$E[\bs{\xi_{n+1}}|\mathcal{F}_n]=\bs{\psi_n}\to \bs{p_0}$ almost
surely and $\sum_{n\geq 1} E[\|\bs{\xi_n}\|^2 ]n^{-2}\leq \sum_{n\geq
  1} n^{-2}<+\infty$.  \\ \indent Regarding the central limit theorem
for $\bs{\overline{\xi}_N}$, we have to distinguish the two cases
$1/2< \epsilon\leq 1$ or $0<\epsilon\leq 1/2$.  In the first case, the
result follows from Theorem \ref{SA-CLT-mu}, 
because \eqref{dinamica-mun} and the fact that
$E[\Delta\bs{M_{n+1}}\Delta\bs{M_{n+1}}^{\top}\,|\mathcal{F}_n]=
\mathrm{diag} ({\bs{\psi_{n-1}}}) - {\bs{\psi_{n-1}}}
       {\bs{\psi_{n-1}}^\top} \to \Gamma$ almost surely; while for the
       second case the result follows from Theorem
       \ref{preliminary-CLT}.  Indeed, we have
\begin{equation*}
\begin{split}
\sqrt{N}\left(\bs{\overline{\xi}_N}-\bs{p_0}\right)
&=
\sqrt{N}\left(\bs{\overline{\xi}_N}-\bs{\overline{\psi}_{N-1}}\right)+
\sqrt{N}\left(\bs{\overline{\psi}_{N-1}}-\bs{p_0}\right)
\\
&=
(c+1)\sqrt{N}\left(\bs{\overline{\xi}_N}-\bs{\overline{\psi}_{N-1}}\right)
-\sqrt{N}\bs{D_N},
\end{split}
\end{equation*}
where ${\bs D_N}=
c\left(\bs{\overline{\xi}_N}-\bs{\overline{\psi}_{N-1}}\right)-
\left(\bs{\overline{\psi}_{N-1}}-\bs{p_0}\right)$.  By Theorem
\ref{preliminary-CLT}, the term
$(c+1)\sqrt{N}\left(\bs{\overline{\xi}_N}-\bs{\overline{\psi}_{N-1}}\right)$
stably converges to $\mathcal{N}(0, (c+1)^2\Gamma)$ (note that assumption
\eqref{ipotesi-base} is satisfied with $V=\Gamma$, because
$\bs{\psi_n}\to\bs{p_0}$ almost surely).  Therefore, in order to
conclude, it is enough to show that $\sqrt{N}\bs{D_N}$ converges in
probability to $\bs{0}$. To this purpose, we observe that, by
\eqref{dinamica-psin-bis} with $\delta_n=c\epsilon_n$, we have
\begin{equation*}
\bs{\psi_n}-\bs{\psi_{n-1}}=
\epsilon_{n-1}
\left[c(\bs{\xi_{n}}-\bs{\psi_{n-1}})-(\bs{\psi_{n-1}}-\bs{p_0})\right]
\end{equation*}
and so 
\begin{equation*}
\begin{split}
{\bs D_N}=
\frac{1}{N}\sum_{n=1}^{N}\frac{\bs{\psi_n}-\bs{\psi_{n-1}}}{\epsilon_{n-1}}.
\end{split}
\end{equation*}
Moreover, we note that $
\sum_{n=1}^{+\infty}(\bs{\psi_n}-\bs{\psi_{n-1}})=
\lim_N\bs{\psi_N}-\bs{\psi_0}=\bs{p_0}-\bs{\psi_0}<+\infty$ and, by
Lemma \ref{gen-kro-lemma}  
(with
$v_{N,n}=\epsilon_{N-1}/\epsilon_{n-1}$), we get
$$
\epsilon_{N-1}\sum_{n=1}^{N}\frac{\bs{\psi_n}-\bs{\psi_{n-1}}}{\epsilon_{n-1}}
\stackrel{a.s.}\longrightarrow \bs{0}.
$$
For $\epsilon\leq 1/2$, this fact implies  
$$
\sqrt{N}\bs{D_N}=\frac{1}{\sqrt{N}\epsilon_{N-1}}
\epsilon_{N-1}\sum_{n=1}^{N-1}\frac{\bs{\psi_n}-\bs{\psi_{n-1}}}{\epsilon_{n-1}}
\stackrel{a.s.}\longrightarrow \bs{0}\,.
$$
The proof is thus concluded.
\qed

\section{Case $\sum_n\epsilon_n<+\infty$}\label{sec-compl}

In this section we provide some results regarding the case
$\sum_n\epsilon_n<+\infty$, even if, as we will see, this case is not
interesting for the chi-squared test of goodness of fit. Indeed, as
shown in the following result, the empirical mean almost surely
converges to a random variable, which does not coincide almost surely
with a deterministic vector.

\begin{theorem}
If $\sum_{n=0}^{+\infty} \epsilon_n<+\infty$, then
$\bs{\overline{\xi}_N}\stackrel{a.s.}\longrightarrow\bs{\psi_\infty}$,
where $\bs{\psi_\infty}$ is a random variable, which is not almost
surely equal to a deterministic vector, that is
$P(\bs{\psi_\infty}\neq \bs{q_0})>0$ for all $\bs{q_0}\in
\mathbb{R}^k$.
\end{theorem}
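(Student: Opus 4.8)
The plan is to first reduce the claim to the almost sure convergence of the predictive mean $\bs{\psi_n}$, and then to analyze the Doob martingale of its limit. I would begin by noting that $|\bs{B_n}|$, and hence $r_n^*$, $\epsilon_n$ and $\delta_n$, are deterministic. Writing $\bs{\xi_n}=\bs{\psi_{n-1}}+\Delta\bs{M_n}$ gives
\[
\bs{\overline{\xi}_N}-\bs{\overline{\psi}_{N-1}}=\frac1N\sum_{n=1}^N\Delta\bs{M_n},
\]
and, since $(\Delta\bs{M_n})$ is bounded, the martingale $\sum_{n\ge1}n^{-1}\Delta\bs{M_n}$ converges almost surely (as $\sum_n n^{-2}E[\|\Delta\bs{M_n}\|^2]<+\infty$); Lemma \ref{S:gen-kro-lemma} then yields $\bs{\overline{\xi}_N}-\bs{\overline{\psi}_{N-1}}\to\bs{0}$ almost surely. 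Consequently, as soon as I prove $\bs{\psi_n}\to\bs{\psi_\infty}$ almost surely, a Ces\`aro argument gives $\bs{\overline{\psi}_{N-1}}\to\bs{\psi_\infty}$ and therefore $\bs{\overline{\xi}_N}\to\bs{\psi_\infty}$.

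To obtain the convergence of $\bs{\psi_n}$, I would sum \eqref{dinamica-thetan} coordinatewise and set
\[
L_{n\,i}:=\sum_{j=0}^{n-1}\delta_j\,\Delta M_{j+1\,i}=\theta_{n\,i}-\theta_{0\,i}+\sum_{j=0}^{n-1}\epsilon_j\,\theta_{j\,i}.
\]
Because $\delta_j$ is deterministic and $E[\Delta\bs{M_{j+1}}\mid\mathcal{F}_j]=\bs{0}$, the sequence $(L_{n\,i})_n$ is a martingale, and it is \emph{uniformly bounded}: $|\theta_{n\,i}|\le1$ while $\sum_j\epsilon_j|\theta_{j\,i}|\le\sum_j\epsilon_j<+\infty$. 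A uniformly bounded martingale converges almost surely, so $L_{n\,i}\to L_{\infty\,i}$; since the correction $\sum_{j<n}\epsilon_j\theta_{j\,i}$ also converges (absolutely), the identity above forces $\theta_{n\,i}$, and hence $\psi_{n\,i}$, to converge almost surely. I would stress that the naive $L^2$ martingale bound is useless here, since $\sum_j\delta_j^2$ may diverge (for instance in the senile case $\beta_n\equiv0$, where $\epsilon_n+\delta_n=1$ forces $\delta_j\to1$); the boundedness of $L_{n\,i}$ comes from the recursion itself, not from an $L^2$ estimate.

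For the non-degeneracy I would compute the Doob martingale $E[\psi_{\infty\,i}\mid\mathcal{F}_n]$ in closed form. Iterating $E[\bs{\theta_{j+1}}\mid\mathcal{F}_j]=(1-\epsilon_j)\bs{\theta_j}$ gives $E[\theta_{j\,i}\mid\mathcal{F}_n]=\theta_{n\,i}\prod_{l=n}^{j-1}(1-\epsilon_l)$ for $j\ge n$; passing to the limit by bounded convergence and using the telescoping identity $\sum_{j\ge n}\epsilon_j\prod_{l=n}^{j-1}(1-\epsilon_l)=1-\rho_n$, with $\rho_n:=\prod_{l\ge n}(1-\epsilon_l)\in(0,1]$, I obtain
\[
E[\psi_{\infty\,i}\mid\mathcal{F}_n]=\rho_n\,\psi_{n\,i}+(1-\rho_n)\,p_{0\,i}.
\]

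Taking $n=1$, this quantity depends on the first draw only through $\psi_{1\,i}$: on $\{\xi_{1\,i}=1\}$ it exceeds its value on $\{\xi_{1\,i}=0\}$ by exactly $\rho_1\delta_0>0$, and both events have positive probability because $\psi_{0\,i}\in(0,1)$ (all $k\ge2$ colors being initially present). Hence $E[\psi_{\infty\,i}\mid\mathcal{F}_1]$ is non-constant, so
\[
\mathrm{Var}(\psi_{\infty\,i})\ge\mathrm{Var}\big(E[\psi_{\infty\,i}\mid\mathcal{F}_1]\big)>0,
\]
which shows that $\bs{\psi_\infty}$ is not almost surely equal to any deterministic vector, i.e. $P(\bs{\psi_\infty}\ne\bs{q_0})>0$ for every $\bs{q_0}\in\mathbb{R}^k$. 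I expect the genuinely delicate point to be this last step: a priori the $\epsilon$-drift accumulated over the whole trajectory could cancel the martingale fluctuations of $\bs{\psi_n}$, and it is the explicit convex-combination formula for the Doob martingale that rules this out and makes the non-degeneracy transparent.
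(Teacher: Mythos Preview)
Your proof is correct and takes a genuinely different route from the paper on both main points.

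For the convergence of $\bs{\psi_n}$, the paper invokes the Robbins--Siegmund almost-supermartingale lemma (or, alternatively, quasi-martingale theory), using $E[\bs{\psi_{n+1}}\mid\mathcal{F}_n]=(1-\epsilon_n)\bs{\psi_n}+\epsilon_n\bs{p_0}\le\bs{\psi_n}+\epsilon_n\bs{p_0}$. You instead observe that the martingale $L_{n\,i}=\sum_{j<n}\delta_j\Delta M_{j+1\,i}$ is \emph{uniformly bounded} directly from the recursion, since $L_{n\,i}=\theta_{n\,i}-\theta_{0\,i}+\sum_{j<n}\epsilon_j\theta_{j\,i}$ and both pieces on the right are bounded by $1+\sum_j\epsilon_j$. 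This is more self-contained and, as you note, sidesteps the issue that $\sum_j\delta_j^2$ may diverge. For the non-degeneracy, the paper tracks the variance $y_n=\sum_i\mathrm{Var}(\psi_{n\,i})$ via the recursion $y_{n+1}=(1-\epsilon_n)^2y_n+\delta_n^2E[\|\Delta\bs{M_{n+1}}\|^2]$ and bounds $y_\infty$ below by $y_{\tilde n}\prod_{n\ge\tilde n}(1-2\epsilon_n)>0$. Your closed-form Doob martingale $E[\psi_{\infty\,i}\mid\mathcal{F}_n]=\rho_n\psi_{n\,i}+(1-\rho_n)p_{0\,i}$ is more explicit: it shows exactly how the limit retains a fraction $\rho_1>0$ of the dependence on the first draw, so the jump of size $\rho_1\delta_0$ between $\{\xi_{1\,i}=1\}$ and $\{\xi_{1\,i}=0\}$ immediately yields positive variance. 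The paper's approach gives a sharper quantitative lower bound on the variance growth; yours gives a cleaner structural explanation of why the randomness of the early draws cannot be washed out.
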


\begin{proof} When $\sum_{n=0}^{+\infty}\epsilon_n<+\infty$, the sequence
$(\bs{\psi_n})$ is a (bounded) non-negative almost supermartingale
  (see \cite{S:rob}) because, by \meqref{dinamica-psin-bis}, we have
$$
E[\bs{\psi_{n+1}}|\mathcal{F}_n]=  
\bs{\psi_n}(1-\epsilon_n)+\epsilon_n\bs{p_0}
\leq
\bs{\psi_n}+\epsilon_n \bs{p_0}.
$$ As a consequence, it converges almost surely (and in $L^p$ with
$p\geq 1$) to a certain random variable $\bs{\psi_\infty}$. An
alternative proof of this fact follows from quasi-martingale theory
\cite{S:met}: indeed, since $\sum_n E[\,
  |E[\bs{\psi_{n+1}}|\mathcal{F}_n]-\bs{\psi_n}|\, ]
=O(\sum_n\epsilon_n)<+\infty$, the stochastic process $(\bs{\psi_n})$
is a non-negative quasi-martingale and so it converges almost surely
(and in $L^p$ with $p\geq 1$) to a certain random variable
$\bs{\psi_\infty}$.  \\ \indent The almost sure convergence of
$\bs{\overline{\xi}_n}$ to $\bs{\psi_\infty}$ follows by Lemma
\ref{lemma-serie-rv} and Remark \ref{case-as-conv} (with
$c_n=n$ and $v_{N,n}=n/N$), because
$E[\bs{\xi_{n+1}}|\mathcal{F}_n]=\bs{\psi_n}\to \bs{\psi_\infty}$
almost surely and $\sum_{n\geq 1} E[\|\bs{\xi_n}\|^2 ]n^{-2}\leq
\sum_{n\geq 1} n^{-2}<+\infty$.  \\ \indent In order to show that
$\bs{\psi_\infty}$ is not almost surely equal to a deterministic
vector, we set
$$
y_n=E[\|\bs{\psi_{n}}-\bs{p_0}\|^2]-\|E[\bs{\psi_n}-\bs{p_0}]\|^2
=\sum_{i=1}^k Var[\psi_{n\,i}-p_{0\,i}]
$$
and observe that, starting from \meqref{dinamica-psin-bis}, we get
$$
\bs{\psi_{n+1}}-\bs{p_0}=(1-\epsilon_n)(\bs{\psi_n}-\bs{p_0})
+\delta_n\Delta\bs{M_{n+1}}
$$
and so 
\begin{equation*}
\|E[\bs{\psi_n}-\bs{p_0}]\|^2
=E[\bs{\psi_n}-\bs{p_0}]^{\top}E[\bs{\psi_n}-\bs{p_0}]
=(1-\epsilon_n)^2\|E[\bs{\psi_n}-\bs{p_0}]\|^2
\end{equation*}
and
\begin{equation*}
  \begin{split}
    E[\|\bs{\psi_{n+1}}-\bs{p_0}\|^2]
&=E[(\bs{\psi_{n+1}}-\bs{p_0})^{\top}(\bs{\psi_{n+1}}-\bs{p_0})]
\\    
&=
(1-\epsilon_n)^2E[\|\bs{\psi_n}-\bs{p_0}\|^2]
+\delta_n^2E[\|\Delta\bs{M_{n+1}}\|^2]\,.
\end{split}
  \end{equation*}
Hence,  we obtain 
\begin{equation}\label{dinamica-yn}
  y_{n+1}=(1-\epsilon_n)^2y_n+\delta_n^2E[\|\Delta\bs{M_{n+1}}\|^2]
=(1-2\epsilon_n)y_n+\widetilde{\zeta}_n
\end{equation}
with $\widetilde{\zeta}_n=\epsilon_n^2 y_n+\delta_n^2E[\|\Delta
  \bs{M_{n+1}}\|^2]\geq 0$.  It follows that, given $\tilde{n}$ such that
$\epsilon_n<1/2$ for $n\geq \tilde{n}$, we have $y_N\geq
y_{\tilde{n}}\prod_{n=\tilde{n}}^{N-1}(1-2\epsilon_n)$ for each $N\geq
\tilde{n}$ and so
$$
E[\|\bs{\psi_\infty}-\bs{p_0}\|^2]-\|E[\bs{\psi_\infty}-\bs{p_0}]\|^2
=y_{\infty}
=\lim_{N\to +\infty}y_N\geq y_{\tilde{n}}\prod_{n=\tilde{n}}^{+\infty}(1-2\epsilon_n)
=y_{\tilde{n}}\exp\left(\sum_{n=\tilde{n}}^{+\infty}\ln(1-2\epsilon_n)\right)\,.
$$ The above exponential is strictly greater than $0$ because
$\sum_{n=\tilde{n}}^{+\infty}\ln(1-2\epsilon_n)\sim
-2\sum_{n=\tilde{n}}^{+\infty}\epsilon_n>-\infty$. Therefore, if
$y_{\tilde{n}}>0$, then we have $y_\infty>0$. This means that
$\bs{\psi_\infty}-\bs{p_0}$, and consequently $\bs{\psi_\infty}$, is
not almost surely equal to a deterministic vector, that is
$P(\bs{\psi_\infty}\neq \bs{q_0})>0$ for all $\bs{q_0}\in
\mathbb{R}^k$. If $y_{\tilde{n}}=0$, that is if
$\bs{\psi_{\tilde{n}}}$ is almost surely equal to a deterministic
vector $\bs{\widetilde{\psi}}$, then, by \eqref{dinamica-yn}, we get
$$
y_{\tilde{n}+1}=\delta_n^2E[\|\Delta \bs{M_{n+1}}\|^2]
=\delta_{\tilde{n}}^2E[\|\bs{\xi_{\tilde{n}+1}}-\bs{\widetilde{\psi}}\|^2]>0\,, 
$$ because $\delta_n>0$ for each $n$ and $\bs{\widetilde{\psi}}$ is
different from a vector of the canonical base of $\mathbb{R}^k$ by
means of the assumption $b_{0\,i}+B_{0\,i}>0$ and equality
\meqref{eq:extract1a:K-vettoriale}. It follows that we can repeat the
above argument replacing $\tilde{n}$ by $\tilde{n}+1$ and conclude
that $\bs{\psi_\infty}$ is not almost surely equal to a deterministic
vector.
\end{proof}

As a consequence of the above theorem, if we aim at having the almost
sure convergence of $\bs{\overline{\xi}_N}$ to a deterministic vector, 
we have to avoid the case $\sum_{n=0}^{+\infty}
\epsilon_n<+\infty$. However, for the sake of completeness, we provide
a second-order convergence result also in this case. First, we note
that Theorem \ref{preliminary-CLT} still holds true with
$V=\mathrm{diag} ({\bs{\psi_{\infty}}}) - {\bs{\psi_{\infty}}}
{\bs{\psi_{\infty}}^\top}$. Indeed, assumption \eqref{ipotesi-base} is
satisfied by Lemma \ref{lemma-serie-rv} and Remark \ref{case-as-conv}
(with $c_n=n$ and $v_{N,n}=n/N$), because of the almost
sure convergence of $\bs{\psi_n}$ to $\bs{\psi_\infty}$. Moreover, we
have the following theorem:
\begin{theorem}\label{CLT-serie-finita}
Suppose to be in one of the following two cases:
\begin{itemize}
\item[a)] $\sum_{n=1}^N n\epsilon_{n-1}=o(\sqrt{N})$ and
  $\sum_{n=1}^N n\delta_{n-1}=o(\sqrt{N})$; 
\item[b)] $\epsilon_n=(n+1)^{-\epsilon}$ and $\delta_n\sim c
  (n+1)^{-\delta}$ with $c>0$, $\delta\in (1/2,1)$ and
  $\epsilon>\delta+1/2$ ($\epsilon=+\infty$ included, that means
  $\epsilon_n=0$ for all $n$).
\end{itemize}
Set $e=1/2$ and $\lambda=1$ in case a) and $e=\delta-1/2\in (0,1/2)$
and $\lambda=c^2/[2(1-e)]=c^2/(3-2\delta)$ in case b). Then, we have
$$
  N^e\left(\bs{\overline{\xi}_{N}}-\bs{\psi_{N}}\right)
  \stackrel{s}\longrightarrow\mathcal{N}\left(0,\lambda \Gamma\right)\,,
$$ where $\Gamma=\mathrm{diag} ({\bs{\psi_{\infty}}}) -
           {\bs{\psi_{\infty}}} {\bs{\psi_{\infty}}^\top}$.\\ \indent
           When
           $\left(\bs{\psi_{N}}-\bs{\psi_{\infty}}\right)=o_P(N^{-e})$,
           we also have
$$
N^e\left(\bs{\overline{\xi}_{N}}-\bs{\psi_{\infty}}\right)
\stackrel{s}\longrightarrow\mathcal{N}\left(0,\lambda\Gamma\right).
$$
\end{theorem}
Note that case a) covers the case $\epsilon_n=(n+1)^{-\epsilon}$ and
$\delta_n\sim c(n+1)^{-\delta}$ with $c>0$ and
$\min\{\epsilon,\delta\}>3/2$.  \\ \indent The case $\epsilon_n=0$
(that is $\beta_n=1$) for all $n$ corresponds to the case considered
in \cite{S:pem90}, but in that paper the author studies only the limit
$\bs{\psi_\infty}$ and he does not provide second-order convergence
results.  

\begin{proof} We have
$$
\begin{aligned}
N^e\left(\bs{\overline{\xi}_{N}}-\bs{\psi_{N}}\right) &=
\frac{1}{N^{1-e}}\left(N\bs{\overline{\xi}_{N}}-N\bs{\psi_{N}}\right)
=
\frac{1}{N^{1-e}}\sum_{n=1}^N\left[\bs{\xi_{n}}-\bs{\psi_{n-1}}
+n({\bs{\psi_{n-1}}}-{\bs{\psi_{n}}})\right]
\\ &
\frac{1}{N^{1-e}}\sum_{n=1}^N(\bs{\xi_{n}}-\bs{\psi_{n-1}})+
\frac{1}{N^{1-e}}\sum_{n=1}^N n\epsilon_{n-1}(\bs{\psi_{n-1}}-\bs{p_0})-
\frac{1}{N^{1-e}}\sum_{n=1}^N n\delta_{n-1}\Delta\bs{M_{n}}
\\ &
=\frac{1}{N^{1/2-e}}\sum_{n=1}^N \bs{Y_{N,n}}+\sum_{n=1}^N\bs{Z_{N,n}}+\bs{Q_N},
\end{aligned}
$$
where
$$
\bs{Y_{N,n}}=
\frac{\bs{\xi_{n}}-{\bs{\psi_{n-1}}}}{\sqrt{N}}=
\frac{\Delta\bs{M_{n}}}{\sqrt{N}},
\qquad
\bs{Z_{N,n}}=-
\frac{n\delta_{n-1}(\bs{\xi_{n}}-{\bs{\psi_{n-1}}})}{N^{1-e}}=
\frac{n\delta_{n-1}\Delta\bs{M_{n}}}{N^{1-e}}
$$
and
$$
\bs{Q_N}=
\frac{1}{N^{1-e}}\sum_{n=1}^N 
n\epsilon_{n-1} (\bs{\psi_{n-1}}-\bs{p_0}).
$$ In both cases a) and b), we have $\sum_{n=1}^N
n\epsilon_{n-1}=o(N^{1-e})$ and so $\bs{Q_N}$ converges almost surely to
$\bs{0}$. Moreover, by Theorem \ref{preliminary-CLT},
$\sum_{n=1}^N\bs{Y_{N,n}}$ stable converges to $\mathcal{N}(\bs{0},V)$
with $V=\Gamma=\mathrm{diag} ({\bs{\psi_{\infty}}}) -
{\bs{\psi_{\infty}}} {\bs{\psi_{\infty}}^\top}$.  Therefore it is
enough to study the convergence of $\sum_{n=1}^N\bs{Z_{N,n}}$.  To
this purpose, we observe that, if we are in case a), then
$\sum_{n=1}^N \bs{Z_{N,n}}$ converges almost surely to $\bs{0}$ and so
$$
\sqrt{N}\left(\bs{\overline{\xi}_{N}}-\bs{\psi_{N}}\right)
\stackrel{s}\longrightarrow \mathcal{N}(0,\Gamma).
$$ Otherwise, if we are in case b), we observe that
$E[\bs{Z_{N,n}}|\mathcal{F}_{n-1}]=\bs{0}$ and so
$\sum_{n=1}^N\bs{Z_{N,n}}$ converges stably to
$\mathcal{N}(\bs{0},\lambda \Gamma)$ if the conditions (c1) and (c2)
of Theorem \ref{stable-conv}, with $V=\lambda \Gamma$,
hold true.  Regarding (c1), we observe that $\max_{1\leq n\leq N}
|\bs{Z_{N,n}} | \leq \frac{1}{N^{1-e}}\max_{1\leq n\leq N}
n\delta_{n-1}|\bs{\xi_{n}} -{\bs{\psi_{n-1}}}|=O(1/\sqrt{N})$.
Regarding condition (c2), that is
$$
\sum_{n=1}^N \bs{Z_{N,n}} {\bs{Z_{N,n}}^\top}
=
\frac{1}{N^{2(1-e)}}\sum_{n=1}^N n^2\delta_{n-1}^2
(\bs{\xi_{n}}-{\bs{\psi_{n-1}}})(\bs{\xi_{n}}-{\bs{\psi_{n-1}}})^\top
\mathop{\longrightarrow}\limits^{P} \frac{c^2}{2(1-e)}\Gamma\,,
$$ we observe that it holds true even almost surely, because $
\frac{1}{N^{2(1-e)}}\sum_{n=1}^N n^2\delta_{n-1}^2\to
c^2/[2(1-e)]=c^2/(3-2\delta)$ and
$$
E[
(\bs{\xi_{n}}-{\bs{\psi_{n-1}}})(\bs{\xi_{n}}-{\bs{\psi_{n-1}}})^\top
|\mathcal{F}_{n-1}]=
\mathrm{diag} ({\bs{\psi_{n-1}}}) - {\bs{\psi_{n-1}}}
{\bs{\psi_{n-1}}^\top}\stackrel{a.s.}\longrightarrow
\Gamma
$$ (see Lemma \ref{lemma-serie-rv} and Remark \ref{case-as-conv} with
$c_n=n$ and $v_{N,n}=n^3\delta_{n-1}^2/N^{2(1-e)}\sim
c^2(n/N)^{3-2\delta}$).  Therefore, we have
$$
N^e\left(\bs{\overline{\xi}_{N}}-\bs{\psi_{N}}\right)
\stackrel{s}\longrightarrow
\mathcal{N}\left(0,c^2(3-2\delta)^{-1}\Gamma\right).
$$ 
Finally, we observe that 
$$
N^e\left(\bs{\overline{\xi}_{N}}-\bs{\psi_{\infty}}\right)=
N^e\left(\bs{\overline{\xi}_{N}}-\bs{\psi_{N}}\right)+
N^e\left(\bs{\psi_{N}}-\bs{\psi_{\infty}}\right).
$$ Therefore, when
$\left(\bs{\psi_{N}}-\bs{\psi_{\infty}}\right)=o_P(N^{-e})$,
we have
$$
N^e\left(\bs{\overline{\xi}_{N}}-\bs{\psi_{\infty}}\right)
\stackrel{s}\longrightarrow
\mathcal{N}\left(0,\lambda \Gamma\right).
$$
\end{proof}

\indent An example of the case a) of Theorem \ref{CLT-serie-finita} 
with $\left(\bs{\psi_{N}}-\bs{\psi_{\infty}}\right)=o_P(N^{-e})$ is
the RP urn with $\alpha_n=\alpha>0$ and $\beta_n=\beta>1$ (see
\cite{S:ale-cri-RP}).  Indeed, in this case, we have $\epsilon_n\sim
c_\epsilon\beta^{-n}$ and $\delta_n\sim c_\delta \beta^{-n}$, where
$c_\epsilon>0$ and $c_\delta>0$ are suitable constants, and
$\left(\bs{\psi_{N}}-\bs{\psi_{\infty}}\right)=O(\beta^{-N})$. We
conclude this section with other two examples regarding the case
$\epsilon_n=0$ (that is $\beta_n=1$) for all $n$.

\begin{example} {\em (Case $\epsilon_n=0$ and 
$\delta_n\sim c(n+1)^{-\delta}$ with $c>0$ and $\delta>3/2$)}\\ \rm If
  $\epsilon_n=0$ for all $n$, then we have
  $r_n^*=|\bs{b_0}|+|\bs{B_0}|+\sum_{h=1}^n\alpha_h$. Therefore, if we
  take $\alpha_n=n^{-\delta}$, with $\delta>3/2$, then $r_n^*$
  converges to the constant
  $r^*=|\bs{b_0}|+|\bs{B_0}|+\sum_{h=1}^{+\infty}h^{-\delta}$ and
  $\delta_n=\alpha_{n+1}/r_{n+1}^*\sim
  c\alpha_{n+1}=c(n+1)^{-\delta}$, with $c=1/r^*$. Moreover, since
  $\delta>3/2$, assumption a) of Theorem \ref{CLT-serie-finita} is
  satisfied. We also observe that $\sum_n\delta_n^2<+\infty$ and so
  $\psi_{\infty\,i}$ is not concentrated on $\{0,1\}$ and has no atoms
  in $(0,1)$ (see \cite[Th.~2 and Th.~3]{S:pem90}). More precisely, we
  have
$$
\bs{\psi_\infty}=\frac{\bs{b_0}+\bs{B_0}+\sum_{n=1}^{+\infty}\alpha_n\bs{\xi_n}}
{|\bs{b_0}|+|\bs{B_0}|+\sum_{n=1}^{+\infty}\alpha_n}
$$
and so 
\begin{equation*}
\begin{split}
&\bs{\psi_N}-\bs{\psi_\infty}=
\\
&\frac{
(\bs{b_0}+\bs{B_0}+\sum_{n=1}^{N}\alpha_n\bs{\xi_n})\sum_{n\geq N+1}\alpha_n
-
(|\bs{b_0}|+|\bs{B_0}|+\sum_{n=1}^{N}\alpha_n)\sum_{n\geq N+1}\alpha_n\bs{\xi_n}
}
{
(|\bs{b_0}|+|\bs{B_0}|+\sum_{n=1}^{N}\alpha_n)
(|\bs{b_0}|+|\bs{B_0}|+\sum_{n=1}^{+\infty}\alpha_n)
}=
\\
&O\left(\sum_{n\geq N+1}\alpha_n\right)=O\left(N^{1-\delta}\right).
\end{split}
\end{equation*}
Since $\delta>3/2$, we get
$(\bs{\psi_N}-\bs{\psi_\infty})=o(N^{-1/2})$. This fact can also be
obtained as a consequence of Theorem \ref{thm:clt-psi-epsilon-zero}
below. Indeed, this theorem states that the rate of convergence of
$\bs{\psi_N}$ to $\bs{\psi_\infty}$ is $N^{-(\delta-1/2)}$.
\\ \indent Note that, since $\beta_n=1$ for all $n$, the factor
$f(h,n)$ in \meqref{eq-psi_n} coincides with $\alpha_h$ and so, in this
case, it is decreasing.
\end{example}

\begin{example} {\em (Case $\epsilon_n=0$ and 
$\delta_n\sim c(n+1)^{-\delta}$ with $c>0$ and $\delta\in (1/2,1)$)}
  \\ \rm As in the previous example, since $\epsilon_n=0$ for all $n$,
  we have $r_n^*=|\bs{b_0}|+|\bs{B_0}|+\sum_{h=1}^n\alpha_h$.  Let us
  set $A_n=\sum_{h=1}^n\alpha_h=\exp(b n^\alpha)$ with $b>0$ and
  $\alpha\in(0,1/2)$, which brings to $r_n^*\sim A_n\uparrow +\infty$
  and $\alpha_n=\exp(b n^\alpha)-\exp(b (n-1)^\alpha) $ and
\begin{equation*}
\begin{split}
\delta_{n-1}&=\frac{\alpha_n}{|\bs{b_0}|+|\bs{B_0}|+A_n}\sim
1-\frac{\sum_{h=1}^{n-1}\alpha_h}{\sum_{h=1}^n\alpha_h}\\
&=1-\exp\left[b\left((n-1)^{\alpha}-n^{\alpha}\right)\right]\\
&=
b
n^{\alpha}\left( 1- (1-n^{-1})^\alpha \right)
+O\left(\, n^{2\alpha}( 1-(1-n^{-1})^\alpha )^2 \,\right)
=b n^{\alpha}\left( \alpha n^{-1}+O(n^{-2}) \right)+O(n^{-(2-2\alpha)})\\
&=b \alpha n^{-(1-\alpha)}+O(n^{-(2-\alpha)})+O(n^{-(2-2\alpha)})=
b \alpha n^{-(1-\alpha)}+O(n^{-2(1-\alpha)}),
\end{split}
\end{equation*}
so that $\delta=(1-\alpha)\in (1/2,1)$ and $c=b\alpha>0$.  Hence, we
have $\delta_n\sim c(n+1)^{-\delta}$ and assumption b) of Theorem
\ref{CLT-serie-finita} is satisfied. We also observe that
$\sum_n\delta_n^2<+\infty$ and so $\psi_{\infty\,i}$ is not
concentrated on $\{0,1\}$ and has no atoms in $(0,1)$ (see \cite[Th.~2
  and Th.~3]{S:pem90}). Moreover, by Theorem
\ref{thm:clt-psi-epsilon-zero} below, we get that $N^{e}
\left(\bs{\psi}_{N}-\bs{\psi_{\infty}}\right){\longrightarrow}
\mathcal{N}\left(0,c^2(2e)^{-1}\Gamma\right)$, where
$e=\delta-1/2$. Hence, applying Theorem \ref{blocco}, we
obtain
$$ N^e\left(\bs{\overline{\xi}_{N}}-\bs{\psi_{\infty}}\right)
\stackrel{s}\longrightarrow\mathcal{N}\left(0,c^2[2e(1-e)]^{-1}\Gamma\right).
$$ Finally, note that, as before, since $\beta_n=1$ for all $n$, the
factor $f(h,n)$ in \meqref{eq-psi_n} coincides with $\alpha_h$ and so,
in this case, $\ell(h) = \ln(f(h,n))=\ln(\alpha_h) \sim
\ln(\delta_{h-1})+bh^\alpha\sim bh^\alpha - b\alpha(1-\alpha)\ln(h)$.
Hence, there exists $h^*$ such that $h\mapsto \ell(h)$ is increasing
for $h\geq h^*$. Since $\max_{h\leq h^*}\ell(h)\leq C$, for a suitable
constant $C$, the contributions of the observations until $h^*$ are
eventually smaller than those with $h \geq h^*$, that are increasing
with $h$.
\end{example}

\begin{theorem}\label{thm:clt-psi-epsilon-zero}
For $\epsilon_n=0$ for all $n$ and $\delta_n\sim c(n+1)^{-\delta}$ with
$c>0$ and $1/2<\delta\leq 1$, we have
\begin{equation*}
N^{\delta-\frac{1}{2}}
\left(\bs{\psi}_{N}-\bs{\psi_{\infty}}\right){\longrightarrow}
\mathcal{N}
\left(0,c^2(2\delta-1)^{-1}\Gamma\right)\qquad
\mbox{stably in the strong sense w.r.t. } \mathcal{F},
\end{equation*}
where $\Gamma=\mathrm{diag} ({\bs{\psi_{\infty}}}) -
           {\bs{\psi_{\infty}}} {\bs{\psi_{\infty}}^\top}$.
\end{theorem}

\begin{proof} We want to apply Theorem \ref{strong-stable-conv}.
  To this purpose, we recall that, when $\epsilon_n=0$ for all $n$,
  the process $(\bs{\psi_n})$ is a martingale with respect to
  $\mathcal{F}$. Moreover, it converges almost surely and in mean to
  $\bs{\psi_\infty}$. Therefore, in order to conclude, it is enough to
  check conditions (c1) and (c2) of Theorem
  \ref{strong-stable-conv}. Regarding the first condition, we note
  that
  $$ N^{\delta-1/2}\, \sup_{n\geq N} |\bs{\psi_n}-\bs{\psi_{n+1}}|=
  N^{\delta-1/2}\sup_{n\geq N}\delta_n|\Delta\bs{M_{n+1}}|=
  O(N^{\delta-1/2-\delta})=O(N^{-1/2})\longrightarrow 0.
$$
Finally, regarding the second condition, we observe that  
\[
\begin{aligned}
N^{2\delta-1}\sum_{n\geq
  N}(\bs{\psi_n}-\bs{\psi_{n+1}})(\bs{\psi_n}-\bs{\psi_{n+1}})^{\top}
&\sim 
N^{2\delta-1}c^2\sum_{n\geq N}
(n+1)^{-2\delta}(\Delta\bs{M_{n+1}})(\Delta\bs{M_{n+1}})^{\top}
\\
&\stackrel{a.s.}{\longrightarrow} \frac{c^2}{(2\delta-1)}\Gamma,
\end{aligned}\]
where the almost sure convergence follows from \cite[Lemma 4.1]{S:cri-dai-min}
and the fact that 
$$
E[(\Delta\bs{M_{n+1}})(\Delta\bs{M_{n+1}})^{\top}|\mathcal{F}_{n}]=
E[(\bs{\xi_{n+1}}-\bs{\psi_n})(\bs{\xi_{n+1}}-\bs{\psi_n})^{\top}|\mathcal{F}_{n}]
\stackrel{a.s.}{\longrightarrow}\Gamma.
$$
\end{proof}

\section{Computations regarding the local reinforcement}
\label{conti}

Suppose $\alpha_n\sim a n^{-\alpha}$ for $n\geq 1$ and
$(1-\beta_n)\sim b (n+1)^{-\beta}$ for $n\geq 0$.  In the following
subsections we study the behaviour of the factor
$f(h,n)=\alpha_h\prod_{j=h}^{n-1}\beta_j$ in some particular cases
that cover the cases of the two examples in Section \mref{sec-main}.
Specifically, for all the considered cases, we set $\ell(h,n) = \ln(
\alpha_h \prod_{j=h}^{n-1} \beta_j)
=\ln(\alpha_h)+\sum_{j=h}^{n-1}\ln(\beta_j)$ for $n\geq h$ and we
prove that there exists $h_*$ such that $\max_{h\leq h_*}\ell(h,n)
\leq \ell(h_*,n)$ and $h\mapsto \ell(h,n)$ is increasing for $h\geq
h_*$. This means that the weights $f(h,n)$ of the observations until
$h_*$ are smaller than those with $h \geq h_*$ and the contribution of
the observation for $h\geq h_*$ is increasing with $h$.

\subsection{Case $\alpha=\beta\in (0,1)$}
Suppose $\alpha_n=an^{-\alpha}$ and $1-\beta_n=b(n+1)^{-\alpha}$, with
$a,\, b>0$ and $\alpha\in (0,1)$. For $n\geq h$,  we have
\begin{equation*}
\begin{aligned}
  \ell(h+1,n)-\ell(h,n) & = \ln(a(h+1)^{-\alpha }) -\ln(a h^{-\alpha }) 
  -\ln( 1 - b(h+1)^{-\alpha})
\\
& =
-\alpha\ln\Big( 1 + \frac{1}{h} \Big)-\ln\Big( 1 - \frac{b}{(h+1)^\alpha}\Big) 
= - \frac{\alpha}{h} + \frac{b}{(h+1)^\alpha}.  
\end{aligned}
\end{equation*}
Since $\alpha<1$, there exists $h_0$ such that the function $h\mapsto
\ell(h,n)$ is monotonically increasing for $h\geq h_0$. Now, fix
$\eta>0$ and let $j_0$ such that $j\geq j_0$ implies $\ln(\beta_j)
\leq -\frac{b j^{-\alpha}}{1+\eta}$.  Then take
$h_*\geq\max(h_0,j_0)+1$ and $h\leq h_0-1$. For $h_*$ large
enough, we get 
\begin{equation*}
\begin{aligned}
  \ell(h_*,n)-\ell(h,n) & =
  \ln(\alpha_{h^*}) - \ln(\alpha_{h}) -\sum_{j=h}^{h_*-1}\ln(\beta_j)
  =\ln(ah_*^{-\alpha})-\ln(ah^{-\alpha})-\sum_{j=h}^{h_*-1}\ln(\beta_j)
\\
&
\geq
\ln(h_*^{-\alpha}) + \sum_{j=\max(h_0,j_0)}^{h_*-1} \frac{b j^{-\alpha}}{1+\eta}
\\
&
\geq 
-\alpha \ln( h_*)  +
C_1 + \frac{b}{1+\eta} \int_{\max(h_0,j_0)}^{h_*-1} x^{-\alpha} \,dx
\\
& 
= 
-\alpha \ln( h_*)  + C_1 + \frac{b}{(1+\eta)(1-\alpha)} 
\big[ (h_*-1)^{1-\alpha} - \max(h_0,j_0)^{1-\alpha} \big]
\\
& 
= 
C_2 -\alpha \ln( h_*)  + \frac{b}{(1+\eta)(1-\alpha)} (h_*-1)^{1-\alpha}\geq 0\,.
\end{aligned}
\end{equation*}
Therefore, taking $h^*$ large enough, we have $\max_{h\leq h_*}
\ell(h,n)=\max_{h\leq h_0-1}\ell(h,n)\vee\max_{h_0\leq h\leq h_*}\ell(h,n)\leq
\ell(h_*,n)$.

\subsection{Case $\alpha=\beta=1$}
Suppose $\alpha_n=an^{-1}$ and $1-\beta_n=b(n+1)^{-1}$, with $a>0$ and
$b>1$. For $n\geq h$, we have
\begin{equation*}
\begin{aligned}
  \ell(h+1,n)-\ell(h,n) & = \ln(a(h+1)^{-1}) -\ln(a h^{-1}) 
  -\ln( 1 - b(h+1)^{-1})
\\
& =
-\ln\Big( 1 + \frac{1}{h} \Big)-\ln\Big( 1 - \frac{b}{(h+1)}\Big) 
 = \frac{b-1}{h+1}+o(h^{-1}).  
\end{aligned}
\end{equation*}
Since $b>1$, we can argue as in the previous subsection. Therefore,
there exists $h_0$ such that the function $h\mapsto \ell(h,n)$ is
monotonically increasing for $h\geq h_0$.  Now, fix
$\eta=(b-1)/(b+1)>0$ and let $j_0$ such that $j\geq j_0$ implies
$\ln(\beta_j) \leq -\frac{b j^{-1}}{1+\eta}$.  Then take
$h_*\geq\max(h_0,j_0)+1$ and $h\leq h_0-1$. For $h_*$ large
enough, we get 
\begin{equation*}
\begin{aligned}
  \ell(h_*,n)-\ell(h,n) & =
  \ln(\alpha_{h_*}) - \ln(\alpha_{h}) -\sum_{j=h}^{h_*-1}\ln(\beta_j)
  =\ln(ah_*^{-1})-\ln(ah^{-1})-\sum_{j=h}^{h_*-1}\ln(\beta_j)
\\
&
\geq
\ln(h_*^{-1}) + \sum_{j=\max(h_0,j_0)}^{h_*-1} \frac{b j^{-1}}{1+\eta}
\\
&
\geq 
-\ln( h_*)  +
C_1 + \frac{b}{1+\eta} \int_{\max(h_0,j_0)}^{h_*-1} x^{-1} \,dx
\\
& 
= 
-\ln(h_*)  + C_1 + \frac{b}{(1+\eta)} 
\big[ \ln(h_*-1) - \ln(\max(h_0,j_0)) \big]
\\
& 
=
C_2 + \frac{b-1-\eta}{(1+\eta)} \ln(h_*)-O(1/h_*)
\\
&
= C_2 +\frac{b(b-1)}{2b}\ln(h_*)-O(1/h_*)\geq 0\,.
\end{aligned}
\end{equation*}
Therefore, taking $h^*$ large enough, we have $\max_{h\leq h_*}
\ell(h,n)=\max_{h\leq h_0-1}\ell(h,n)\vee\max_{h_0\leq h\leq h_*}\ell(h,n) \leq
\ell(h_*,n)$.

\subsection{Case $0<\alpha<\beta<(1+\alpha)/2$}
Suppose
$$
\alpha_n=
an^{-\alpha}\left(
1+\frac{c_1}{n^{1-\beta}}+
\frac{c_2}{n^{\beta-\alpha}}+\frac{c_3}{n}+O(1/n^{2-\beta})\right)
$$ and $1-\beta_n=b(n+1)^{-\beta}$, with $a,\, b>0$,
$0<\alpha<\beta<(1+\alpha)/2$ and $c_1,\,c_2,\,c_3\in \mathbb{R}$. Set
$\gamma=\beta-\alpha\in (0,1/2)$.  For $n\geq h$, we have
\begin{equation}\label{def_monot}
\begin{aligned}
  \ell(h+1,n)-\ell(h,n) & = \ln(a(h+1)^{-\alpha }) -\ln(a h^{-\alpha })
  -\ln( 1 -b(h+1)^{-\beta})\\
&  \qquad +\ln\big(
1 + c_1 /(h+1)^{1-\beta} + c_2 /(h+1)^{\gamma} + c_3 / (h+1) + O(1/h^{2-\beta})
\big)
   \\ 
&  \qquad \qquad -\ln\big(
1 + c_1/h^{1-\beta} +c_2 / h^{\gamma} + c_3/ h+ O(1/h^{2-\beta})
\big) \,.
\end{aligned}
\end{equation}
Now, we aim at obtaining a series expansion with a reminder term of
the type $o(1/h^{\beta})$.  Since $\beta<1$, the first three terms of
the right-hand side of the above equation give
\[
\ln(a(h+1)^{-\alpha }) -\ln(a h^{-\alpha })
  -\ln( 1 - b(h+1)^{-\beta}) =
  -\alpha\ln\Big( 1 + \frac{1}{h} \Big)-\ln\Big( 1 - \frac{b}{(h+1)^\beta}\Big) =
\frac{b}{(h+1)^\beta} + o(h^{-\beta}).
\]
We deal now with the last two terms of \eqref{def_monot}. We recall that
\[
\ln(1+x) =
x - \frac{x^2}{2} + \frac{x^3}{3} + \cdots +(-1)^{j-1}\frac{x^j}{j} + o(x^j)\, ,
\]
and therefore, since $2-\beta=1+1-\beta>1>\beta$ and
$j(1-\beta)>\beta$ and $j\gamma=j(\beta-\alpha)>\beta$ for $j$ large
enough, there are only a finite number $J_0$ of terms with an order
$\tau_j\leq \beta$. In other words, we can write
\begin{align*}
  &\ln\big( 1 + c_1/(h+1)^{1-\beta} + c_2 /(h+1)^{\gamma} + c_3/ (h+1)
  + O(1/n^{2-\beta})\big)
\\ 
&-\ln\big( 1 + c_1 /h^{1-\beta} +c_2 / h^{\gamma} + c_3/h+ O(1/n^{2-\beta})\big)
\\
& = \sum_{j=1}^{J_0} C_j (h+1)^{-\tau_j} - \sum_{j=1}^{J_0} C_j h^{-\tau_j}
+ o(1/h^{\beta})
\\
& 
= \sum_{j=1}^{J_0} C_j \big[ (h+1)^{-\tau_j} - h^{-\tau_j} \big] + o(1/h^{\beta})
= \sum_{j=1}^{J_0} C_j h^{-\tau_j} \big[ (1+h^{-1})^{-\tau_j} -1\big] + o(1/h^{\beta})
\\
& 
= \sum_{j=1}^{J_0} C_j h^{-\tau_j} (\tau_j h^{-1}+ o(1/h)\big) + o(1/h^{\beta})
= o(1/h^{\beta})\,.
\end{align*}
Summing up, we have 
\[
\ell(h+1,n)-\ell(h,n) = \frac{b}{(h+1)^\beta} + o(h^{-\beta}).
\]
Then there exists $h_0$ such that the function $h\mapsto \ell(h,n)$ is
monotonically increasing for $h\geq h_0$. Now, fix $\eta>0$ and let
$j_0$ such that $j\geq j_0$ implies $\ln(\beta_j) \leq -\frac{b
  j^{-\beta}}{1+\eta}$.  Then take $h_*\geq\max(h_0,j_0)+1$ and
$h\leq h_0-1$.  Since $\beta<(1+\alpha)/2$, we have
$\alpha_n=an^{-\alpha}(1+O(1/n^\gamma))$ and so, for $h_*$ large enough, we get
\begin{equation*}
\begin{aligned}
  \ell(h_*,n)-\ell(h,n) & =
  \ln(\alpha_{h_*}) - \ln(\alpha_{h}) -\sum_{j=h}^{h_*-1}\ln(\beta_j)
  \\
  &=\ln(ah_*^{-\alpha})-\ln(ah^{-\alpha})+\ln(1+O(h_*^{-\gamma}))
  +C_1-\sum_{j=h}^{h_*-1}\ln(\beta_j)
\\
&
\geq
\ln(h_*^{-\alpha})+\ln(1+O(h_*^{-\gamma})) +C_1+
\sum_{j=\max(h_0,j_0)}^{h_*-1} \frac{b j^{-\beta}}{1+\eta}
\\
&
\geq 
-\alpha \ln( h_*)  + O(h_*^{-\gamma})+C_2 + \frac{b}{1+\eta}
\int_{\max(h_0,j_0)}^{h_*-1} x^{-\beta} \,dx
\\
& 
= 
-\alpha \ln( h_*)  + O(h_*^{-\gamma}) + C_2 + \frac{b}{(1+\eta)(1-\beta)} 
\big[ (h_*-1)^{1-\beta} - \max(h_0,j_0)^{1-\beta} \big]
\\
& 
= 
C_3 + O(h_*^{-\gamma})
-\alpha \ln( h_*)  + \frac{b}{(1+\eta)(1-\beta)} (h_*-1)^{1-\beta}\geq 0\,.
\end{aligned}
\end{equation*}
Therefore, taking $h^*$ large enough, we have $\max_{h\leq h^*}
\ell(h,n)=\max_{h\leq h_0-1}\ell(h,n)\vee\max_{h_0\leq h\leq h^*}\ell(h,n)\leq
\ell(h^*,n)$.


\section{Technical results}

We recall the generalized Kronecker lemma \cite[Corollary
  A.1]{S:ale-cri-ghi-MEAN}:

\begin{lemma}\label{gen-kro-lemma} (Generalized Kronecker Lemma)\\
Let $\{v_{N,n}:1\leq n\leq N\}$ and $(z_n)_n$ be respectively a
triangular array and a sequence of complex numbers such that
 $v_{N,n}\neq 0$ and
$$
\lim_N v_{N,n}=0,\quad\lim_n v_{n,n}\;\hbox{exists finite},\quad
\sum_{n=1}^N \left|v_{N,n}-v_{N,n-1}\right|=O(1)
$$ and $\sum_n z_n$ is convergent. Then $ \lim_N \sum_{n=1}^N
v_{N,n}z_n=0.$
\end{lemma}

The above corollary is useful to get the following result for complex
random variables, which slightly extends the version provided in
\cite[Lemma A.2]{S:ale-cri-ghi-MEAN}:

\begin{lemma}\label{lemma-serie-rv}
Let ${\mathcal H}=({\mathcal H}_n)_n$ be a filtration and $(Y_n)_n$ a
$\mathcal H$-adapted sequence of complex random variables.  Moreover,
let $(c_n)_n$ be a sequence of strictly positive real numbers such
that $\sum_n E\left[|Y_n|^2\right]/c_n^2<+\infty$ and let
$\{v_{N,n},1\leq n\leq N\}$ be a triangular array of complex numbers
such that $v_{N,n}\neq 0$ and
\begin{equation*}
\lim_N v_{N,n}=0,\quad
\lim_n v_{n,n}\;\hbox{exists finite},
\quad
\sum_{n=1}^N \left|v_{N,n}-v_{N,n-1}\right|=O(1)\,.
\end{equation*}
Suppose that 
\begin{equation}\label{conv-medie-cond}
\sum_{n=1}^N v_{N,n}\frac{E[Y_n|{\mathcal H}_{n-1}]}{c_n}
\stackrel{P}\longrightarrow V,
\end{equation}
where $V$ is a suitable random variable.  Then $\sum_{n=1}^N
v_{N,n}Y_n/c_n\stackrel{P}\longrightarrow V$.  \\

\indent If the convergence in \eqref{conv-medie-cond} is almost sure,
then also the convergence of $\sum_{n=1}^N v_{N,n}Y_n/c_n$ toward
$V$ is almost sure.
\end{lemma}

\begin{proof} Consider the martingale $(M_n)_n$ defined by
$$
M_n=\sum_{j=1}^n \frac{Y_j-E[Y_j|{\mathcal H}_{j-1}]}{c_j}.
$$
It is bounded in $L^2$ since $\sum_{n} 
\frac{E[|Y_n|^2]}{c_n^2}<+\infty$ by assumption and so it is almost surely
convergent, that means $$ \sum_{n}
\frac{Y_n(\omega)-E[Y_n|{\mathcal H}_{n-1}](\omega)}{c_n}<+\infty
$$ for $\omega\in B$ with $P(B)=1$. Therefore, fixing $\omega\in B$
and setting $z_n=\frac{Y_n(\omega)-E[Y_n|{\mathcal
      H}_{n-1}](\omega)}{c_n}$, by Lemma \ref{gen-kro-lemma},
we get
$$
\lim_N \sum_{n=1}^N v_{N,n}
\frac{Y_n(\omega)-E[Y_n|{\mathcal H}_{n-1}](\omega)}{c_n}
=0\,,
$$
that is 
$$
\sum_{n=1}^N v_{N,n}
\frac{Y_n-E[Y_n|{\mathcal H}_{n-1}]}{c_n}
\stackrel{a.s.}\longrightarrow 0.
$$
In order to conclude, it is enough to observe that
$$
\sum_{n=1}^N v_{N,n}\frac{Y_n}{c_n}=
\sum_{n=1}^N v_{N,n}\frac{Y_n-E[Y_n|{\mathcal H}_{n-1}]}{c_n} +
\sum_{n=1}^N v_{N,n}\frac{E[Y_n|{\mathcal H}_{n-1}]}{c_n}
$$
and use assumption \eqref{conv-medie-cond}.
\end{proof}

\begin{remark}\label{case-as-conv} \rm 
If we have $\sum_{n=1}^N\frac{|v_{N,n}|}{c_n}=O(1)$,
$\lim_N\sum_{n=1}^N \frac{v_{N,n}}{c_n}= \lambda\in{\mathbb C}$ and
$E[Y_n|{\mathcal H}_{n-1}]\stackrel{a.s.}\longrightarrow Y$, then
\eqref{conv-medie-cond} is satisfied with almost sure convergence and
$V=\lambda Y$.  Indeed, if we denote by $A$ an event such that
$P(A)=1$ and $\lim_n E[Y_n|{\mathcal H}_{n-1}](\omega)=Y(\omega)$ for
each $\omega\in A$, then we can fix $\omega \in A$, set
$w_n=E[Y_n|{\mathcal H}_{n-1}](\omega)$ and $w=Y(\omega)$, and apply
the generalized Toeplitz lemma \cite[Lemma A.1]{S:ale-cri-ghi-MEAN}
(with $z_{N,n}=v_{N,n}/(c_n\lambda)$ and $s=1$ when $\lambda\neq 0$ and with
$z_{N,n}=v_{N,n}/c_n$ and $s=0$ when $\lambda=0$) in order to get
$\sum_{n=1}^N v_{N,n}\frac{w_n}{c_n}\to \lambda Y$ almost surely.
\end{remark}

\indent The proof of the following lemma can be found in \cite{S:del}. We here 
rewrite the proof only for the reader's convenience.
 
\begin{lemma}\label{lemma-del} (\cite{S:del}, Lemma 18)\\
Let $x_n$, $\zeta_n$, $\gamma_n$ be non-negative sequences such that 
$\gamma_n\to 0,\quad \sum_n\gamma_n=+\infty$
and
$$
x_n\leq (1-\gamma_n)x_{n-1}+\gamma_n\zeta_n.
$$
Then $\limsup_n x_n\leq \limsup_n \zeta_n$.
\end{lemma}
\begin{proof}
Take $L>\limsup_n\zeta_n$ and $n^*$ large enough 
so that $\zeta_n<L$ and $\gamma_n\leq 1$ when $n\geq n^*$. Then, using that 
$(x+y)^+\leq x^+ + y^+$, we have for $n\geq n^*$
\begin{equation*}
\begin{split}
(x_n-L)^+
&\leq 
\left((1-\gamma_n)(x_{n-1}-L)+\gamma_n(\zeta_n-L)\right)^+\\
&\leq
(1-\gamma_n)(x_{n-1}-L)^++\gamma_n(\zeta_n-L)^+\\
&\leq
(1-\gamma_n)(x_{n-1}-L)^+.
\end{split}
\end{equation*}
Since $\sum_n\gamma_n=+\infty$, the above inequality implies that
$\lim_n(x_n-L)^+=0$. This is enough to conclude, because we can choose
$L$ arbitrarily close to $\limsup_n\zeta_n$.
\end{proof}


\section{Some stochastic approximation results}\label{SA-app}

Consider a stochastic process $(\bs{\theta_n})$ taking values in
$\Theta=[-1,1]^k$, adapted to a filtration
$\mathcal{F}=(\mathcal{F}_n)_n$ and following the dynamics
\begin{equation}\label{case-01}
  \bs{\theta_{n+1}} = (1-\epsilon_n)\bs{\theta_n} +
  c\epsilon_n \Delta \bs{M_{n+1}}, 
\end{equation}
where $c>0$, $(\Delta\bs{M_{n+1}})_n$ is a uniformly bounded
martingale difference sequence with respect to $\mathcal{F}$ and
$\epsilon_n=(n+1)^{-\epsilon}$ with $\epsilon\in (0,1]$ so that
$\epsilon_n\to 0$ and $\sum_n\epsilon_n=+\infty$. Setting
$\Delta\bs{ \widetilde{M}_{n+1} }=c\Delta \bs{M_{n+1}}$, equation
\eqref{case-01} becomes
\begin{equation*}
  \bs{\theta_{n+1}} = (1-\epsilon_n)\bs{\theta_n} +
  \epsilon_n \Delta \bs{ \widetilde{M}_{n+1}}.
\end{equation*}
Then:

\begin{theorem}\label{SA-conv}
In the above setting, we have 
$
\bs{\theta_N}\stackrel{a.s.}\longrightarrow \bs{0}\,.
$
\end{theorem}
\begin{proof}
We have the following two cases:
\begin{itemize}
\item $\epsilon\in (1/2,1]$ so that $\sum_n\epsilon_n^2<+\infty$ or 
\item $\epsilon\in (0,1/2]$ so that $\sum_n\epsilon_n^2=+\infty$.
\end{itemize}
For the first case, we refer to \cite[Cap.~5, Th.~2.1]{S:KusYin03}. For
the second case, we refer to \cite[Cap.~5, Th.~3.1]{S:KusYin03}). In
this case, since $(\bs{\theta_n})$ and $(\Delta \bs{\widetilde{M}_n})$ are
uniformly bounded, the key assumption to be verified in order to apply
\cite[Cap.~5, Th.~3.1]{S:KusYin03} is the ``rate of change'' condition
(see \cite[p.~137]{S:KusYin03}), that is
\begin{equation*}
\limsup_N \sup_{t \in [0,1]} | M^0(N+t) - M^0(N)| = 0, \qquad a.s.
\end{equation*}
where $ M^0(t) = \sum_{j=0}^{m(t)-1} \epsilon_j
\Delta\bs{\widetilde{M}_{j+1}}$ and $m(t) = \inf\{ n \colon t <
t_{n+1}=\sum_{j=0}^n \epsilon_j \}$ (see
\cite[p.~122]{S:KusYin03}). Since $(\Delta\bs{\widetilde{M}_n})$ is
uniformly bounded, the above condition is satisfied when the following
simpler conditions are satisfied (see \cite[pp. 139-141]{S:KusYin03}):
\begin{itemize}
\item[(i)] For each $u>0$ $\sum_n e^{-u /\epsilon_n}<+\infty$;
\item[(ii)] For some $T<+\infty$, there exists a constant
  $c(T)<+\infty$ such that $ \sup_{n\leq j\leq
  m(t_n+T)}\frac{\epsilon_j}{\epsilon_n}\leq c(T)$. 
\end{itemize}
When $\epsilon_n=(1+n)^{-\epsilon}$, condition (i) is obviously
verified, because we have $\lim_n n^2/e^{u(1+n)^{-\epsilon}}=0$.
Finally, condition (ii) is always satisfied when $\epsilon_n$ is
decreasing, as it is in the case
$\epsilon_n=(1+n)^{-\epsilon}$. Indeed, we simply have $ \sup_{n\leq
  j\leq m(t_n+T)}\epsilon_j/\epsilon_n=\epsilon_n/\epsilon_n=1$.
\end{proof}

\begin{theorem}\label{SA-CLT}
In the above setting, if we have
$E[\Delta\bs{M_{n+1}}\Delta\bs{M_{n+1}}^{\top}|\mathcal{F}_n]
\stackrel{a.s.}\longrightarrow\Gamma$ with $\Gamma$ a symmetric
positive definite matrix, then we have
$$
\frac{1}{\sqrt{\epsilon_N}}\bs{\theta_N}\stackrel{d}\longrightarrow
\mathcal{N}(\bs{0},\Sigma),
$$ where $\Sigma=c^2\Gamma/2$ when $\epsilon\in (0,1)$ and
$\Sigma=c^2\Gamma$ when $\epsilon=1$.
\end{theorem}
\begin{proof} We 
have $\bs{\theta_N}\stackrel{a.s.}\longrightarrow \bs{0}$ and $\bs{0}$
belongs to the interior part of $\Theta$.  Moreover, we have
$$E[\Delta\bs{\widetilde{M}_{n+1}}\Delta\bs{\widetilde{M}_{n+1}}^{\top}|
  \mathcal{F}_n] \stackrel{a.s.}\longrightarrow c^2\Gamma.$$ For the case
$\epsilon\in (1/2,1]$, we refer to \cite[Th.~2.1]{S:fort} (with $h=Id$,
  $U_*=c^2\Gamma$ and $\gamma_*=1$) and \cite[Th.~1]{S:pel} (with
  $H=-Id$, $\gamma_n=\sigma_n=\epsilon_n$ and so $\gamma_0=1$ and
  $\beta=\epsilon$).  For the case $\epsilon\in (0,1/2]$, we refer to
    \cite[cap.10, Th.~2.1]{S:KusYin03} (with $A=-Id$).  The key
    assumption for applying this theorem is
    $\bs{\theta_n}/\sqrt{\epsilon_n}$ tight. On the other hand, in the
    considered setting, this last condition is satisfied because of
    \cite[Th.~4.1]{S:KusYin03}. Note that the limit distribution
    corresponds to the stationary distribution of the diffusion
$$
dU_t= \left(-Id+c(\epsilon)\right) U_t dt+ c\Gamma^{1/2} dW_t,
$$
where $W=(W_t)_t$ is a standard Wiener process and 
\begin{equation*}
c(\epsilon)=\begin{cases}
0\quad\hbox{for } \epsilon<1\\
1/2\quad\hbox{for } \epsilon=1.
\end{cases}
\end{equation*}
Therefore the limit covariance matrix is determined by solving the
associated Lyapunov's equation \cite{S:pel}, that, in the considered case, 
simply is
$$
2\left(-Id+c(\epsilon)Id\right)\Sigma=-c^2\Gamma.
$$
\end{proof}

\begin{theorem}\label{SA-CLT-mu} 
In the above setting, let $(\bs{\mu_n})$ be another stochastic process
taking values in $\Theta=[-1,1]^k$, adapted to a filtration
$\mathcal{F}$ and following the dynamics
\begin{equation*}
\bs{\mu_{n+1}}-\bs{\mu_n}=-\frac{1}{n}(\bs{\mu_n}-\bs{\theta_n})
+\frac{1}{n}\Delta\bs{M_{n+1}}\,.
\end{equation*}
Suppose that
$E[\Delta\bs{M_{n+1}}\Delta\bs{M_{n+1}}^{\top}|\mathcal{F}_n]
\stackrel{a.s.}\longrightarrow \Gamma$. If $\epsilon\in (1/2,1)$, then
  we have
$$
\begin{pmatrix}
 \sqrt{N}\bs{\mu}_N\\
 \epsilon_N^{-1/2}\bs{\theta}_N
\end{pmatrix}
\stackrel{d}\longrightarrow
\mathcal{N}\left(\bs{0},\,
\begin{pmatrix}
(c+1)^2\Gamma & \bs{0}\\
 \bs{0}  & \frac{c^2}{2}\Gamma
\end{pmatrix}
\right).
$$
If $\epsilon=1$, then we have
$$
\begin{pmatrix}
 \sqrt{N}\bs{\mu}_N\\
 \epsilon_N^{-1/2}\bs{\theta}_N
\end{pmatrix}
\stackrel{d}\longrightarrow
\mathcal{N}\left(\bs{0},\,
\begin{pmatrix}
[(c+1)^2+c^2]\Gamma & c(c+1)\Gamma\\
 c(c+1)\Gamma  & c^2\Gamma
\end{pmatrix}
\right).
$$
\end{theorem}
\begin{proof} 
The dynamics for the pair $(\bs{\mu_n},\bs{\theta_n})_n$ is 
\begin{equation*}
\begin{cases}
\bs{\mu_{n+1}}-\bs{\mu_n}&=-\frac{1}{n}(\bs{\mu_n}-\bs{\theta_n})
+\frac{1}{n}\Delta\bs{M_{n+1}}\\
\bs{\theta_{n+1}}-\bs{\theta_{n}}&=-\epsilon_{n}\bs{\theta_{n}}
+c\epsilon_n\Delta\bs{M_{n+1}}=-\epsilon_{n}\bs{\theta_{n}}
+\epsilon_n\Delta\bs{\widetilde{M}_{n+1}}\,.
\end{cases}
\end{equation*}
with $E[\Delta\bs{M_{n+1}}\Delta\bs{M_{n+1}}^{\top}\,|\mathcal{F}_n]
\stackrel{a.s.}\longrightarrow \Gamma$.  Therefore, when
$1/2<\epsilon<1$, the statement follows from \cite{S:mok-pel2006} (with
$Q_{11}=Q_{22}=-Id$, $Q_{12}=Id$, $Q_{21}=\bs{0}$, $b=\beta_0=1$,
$a=\epsilon$, $\Gamma_{11}=\Gamma$, $\Gamma_{22}=c^2\Gamma$ and
$\Gamma_{12}=\Gamma_{21}=c\Gamma$). In particular, the two blocks of the limit
covariance matrix, say $\Sigma_{\mu}$ and $\Sigma_{\theta}$, are
determined solving the equations
$$
(H+\frac{1}{2}Id)\Sigma_{\mu}+\Sigma_\mu(H^{\top}+\frac{1}{2}Id)=-\Gamma_{\mu},
$$ where $H=Q_{11}-Q_{12}Q_{22}^{-1}Q_{21} = - Id + \bs{0}$ and
$\Gamma_{\mu}=\Gamma_{11}+Q_{12}Q_{22}^{-1}\Gamma_{22}(Q_{22}^{-1})^{\top}Q_{12}^{\top}-
\Gamma_{12}(Q_{22}^{-1})^{\top}Q_{12}^{\top}-Q_{12}Q_{22}^{-1}\Gamma_{21} = 
\Gamma +c^2\Gamma + c\Gamma + c \Gamma = (c+1)^2\Gamma$,
and
$$
Q_{22}\Sigma_\theta+\Sigma_\theta Q_{22}^{\top}=-\Gamma_{22}.
$$

When $\epsilon=1$, we can conclude by \cite{S:pel} or \cite{S:Zhang2016}
taking $\bs{X_n}=(\bs{\mu_n},\,\bs{\theta_n})^{\top}$. Indeed, in this
case the covariance matrix is given by
$$
(H+\frac{1}{2}Id)\Sigma+\Sigma(H^\top+\frac{1}{2}Id)=-\widetilde{\Gamma},
$$
where
$$
H=\begin{pmatrix}
-Id & Id\\
 \bs{0}  & -Id
\end{pmatrix}
\quad\mbox{and}\quad
\widetilde{\Gamma}=\begin{pmatrix}
\Gamma & c\Gamma\\
c\Gamma  & c^2\Gamma
\end{pmatrix}.
$$ Therefore, if we split $\Sigma$ in blocks, say $\Sigma_{\mu}$,
$\Sigma_\theta$ and $\Sigma_{\mu\theta}$, we find the system
\begin{equation*}
  \begin{split}
-\Sigma_\mu+2\Sigma_{\mu\theta}&=-\Gamma\\
-\Sigma_{\mu\theta}+\Sigma_{\theta}&=-c\Gamma\\
-\Sigma_\theta&=-c^2\Gamma
  \end{split}
\end{equation*}
and so the proof is concluded by solving this system.
\end{proof}

\section{Stable convergence}\label{stable-conv-app}

This brief section contains some basic definitions and results
concerning stable convergence. For more details, we refer the reader
to \cite{S:crimaldi-libro,S:CriLetPra,S:HallHeyde} and the references
therein.\\

\indent Let $(\Omega, {\mathcal A}, P)$ be a probability space, and
let $S$ be a Polish space, endowed with its Borel $\sigma$-field. A
{\em kernel} on $S$, or a random probability measure on $S$, is a
collection $K=\{K(\omega):\, \omega\in\Omega\}$ of probability
measures on the Borel $\sigma$-field of $S$ such that, for each
bounded Borel real function $f$ on $S$, the map
$$
\omega\mapsto K\!f(\omega)=\int f (x)\, K(\omega)(dx)
$$ is $\mathcal A$-measurable. Given a sub-$\sigma$-field $\mathcal H$
of $\mathcal A$, a kernel $K$ is said $\mathcal H$-measurable if all
the above random variables $K\!f$ are $\mathcal H$-measurable. A
probability measure $\nu$ can be identified with a constant kernel
$K(\omega)=\nu$ for each $\omega$.\\

\indent On $(\Omega, {\mathcal A},P)$, let $(Y_n)_n$ be a sequence
of $S$-valued random variables, let $\mathcal H$ be a
sub-$\sigma$-field of $\mathcal A$, and let $K$ be a $\mathcal
H$-measurable kernel on $S$. Then, we say that $Y_n$ converges {\em
$\mathcal H$-stably} to $K$, and we write $Y_n\longrightarrow K$
${\mathcal H}$-stably, if
$$
P(Y_n \in \cdot \,|\, H)\stackrel{weakly}\longrightarrow
E\left[K(\cdot)\,|\, H \right] \qquad\hbox{for all } H\in{\mathcal
H}\; \hbox{with } P(H) > 0,
$$where $K(\cdot)$ denotes the random variable defined, for each Borel
set $B$ of $S$, as $\omega\mapsto K\!I_B(\omega)=K(\omega)(B)$.  In
the case when ${\mathcal H}={\mathcal A}$, we simply say that $Y_n$
converges {\em stably} to $K$ and we write $Y_n\longrightarrow K$
stably. Clearly, if $Y_n\longrightarrow K$ ${\mathcal H}$-stably, then
$Y_n$ converges in distribution to the probability distribution
$E[K(\cdot)]$. The $\mathcal H$-stable convergence of $Y_n$ to $K$ can
be stated in terms of the following convergence of conditional
expectations:
\begin{equation}\label{def-stable}
E[f(Y_n)\,|\, {\mathcal H}]\stackrel{\sigma(L^1,\, L^{\infty})}\longrightarrow
K\!f
\end{equation}
for each bounded continuous real function $f$ on $S$. In
\cite{S:CriLetPra} the notion of $\mathcal H$-stable convergence
is firstly generalized in a natural way replacing in
(\ref{def-stable}) the single sub-$\sigma$-field $\mathcal H$ by a
collection ${\mathcal G}=({\mathcal G}_n)$ (called conditioning
system) of sub-$\sigma$-fields of $\mathcal A$ and then it is
strengthened by substituting the convergence in
$\sigma(L^1,L^{\infty})$ by the one in probability (i.e. in $L^1$,
since $f$ is bounded). Hence, according to \cite{S:CriLetPra}, we
say that $Y_n$ converges to $K$ {\em stably in the strong sense}, with
respect to ${\mathcal G}=({\mathcal G}_n)$, if
\begin{equation*}
E\left[f(Y_n)\,|\,{\mathcal G}_n\right]\stackrel{P}\longrightarrow K\!f
\end{equation*}
for each bounded continuous real function $f$ on $S$.
\\

\indent We now conclude this section recalling some convergence
results that we apply in our proofs.  \\

\indent From \cite[Th.~3.2]{S:HallHeyde} (see also \cite[Th.~5 and
  Cor.~7]{S:CriLetPra} or \cite[Th.~5.5.1 and
  Cor.~5.5.2]{S:crimaldi-libro}), we get:

\begin{theorem}\label{stable-conv} 
Given a filtration $\mathcal{F}=(\mathcal{F}_n)_n$, let
$(\bs{Y_{N,n}})_{N,n}$ be a triangular array of random variables with
values in $\mathbb{R}^k$ such that $Y_{N,n}$ is
$\mathcal{F}_n$-measurable and
$E[\bs{Y_{N,n}}|\mathcal{F}_{n-1}]=\bs{0}$. Suppose that the following
two conditions are satisfied:
\begin{itemize}
\item[(c1)] $E\left[\,\max_{1\leq n\leq N} |\bs{Y_{N,n}} |\,\right]\to 0$ and 
\item[(c2)] $\sum_{n=1}^N \bs{Y_{N,n}} {\bs{Y_{N,n}}^\top}
\stackrel{P}\longrightarrow V$, where $V$ is
  a random variable with values in the space of positive semidefinite
  $k\times k$-matrices.
\end{itemize}
Then $\sum_{n=1}^N\bs{Y_{N,n}}$ converges stably to the Gaussian
kernel $\mathcal{N}(\bs{0},V)$. 
\end{theorem}

From \cite[Th.~5, Cor.~7, Rem.~4]{S:CriLetPra} or \cite[Th.~5.5.1,
  Cor.~5.5.2, Rem.~5.5.2]{S:crimaldi-libro}), we obtain:

\begin{theorem}\label{strong-stable-conv}
Let $(\bs{L_n})$ be a $\mathbb{R}^k$-valued martingale with respect to the
filtration $\mathcal{F}=(\mathcal{F}_n)$. Suppose that
$\bs{L_n}\stackrel{a.s.,\, L^1}\longrightarrow \bs{L}$ for some
$\mathbb{R}^k$-valued random variable $L$ and
\begin{itemize}
\item[(c1)] $n^e\,E[\sup_{j\geq n} |\bs{L_{j-1}}-\bs{L_j}|\, ]
  \longrightarrow 0$ and
\item[(c2)] $n^{2e}\sum_{j\geq n}(\bs{L_{j-1}}-\bs{L_j})(\bs{L_{j-1}}-\bs{L_j})^{\top}
  \stackrel{P}\longrightarrow V$, where $V$ is
  a random variable with values in the space of positive semidefinite
  $k\times k$-matrices.
\end{itemize}
Then
$$n^e\,\bigl(\bs{L_n}-\bs{L}\bigr)
\longrightarrow\mathcal{N}(0,V)\quad\mbox{stably
in strong sense w.r.t. } \mathcal{F}.
$$
\end{theorem}
Indeed, following \cite[Example~6]{S:CriLetPra}, it is enough to observe
that $\bs{L_n}-\bs{L}$ can be written as $\bs{L_n}-\bs{L}=\sum_{j\geq
  n}(\bs{L_j}-\bs{L_{j+1}})$.
\\
  
\indent Finally, the following result combines together a stable
convergence and a stable convergence in the strong sense \cite[Lemma
  1]{S:BeCrPrRi11}.

\begin{theorem}\label{blocco}
Suppose that $C_n$ and $D_n$ are $S$-valued random variables, that $M$
and $N$ are kernels on $S$, and that ${\mathcal G}=({\mathcal G}_n)_n$
is an (increasing) filtration satisfying for all $n$
$$
\sigma(C_n)\underline\subset{\mathcal G}_n\quad\hbox{and }\quad
\sigma(D_n)\underline\subset
\sigma\left({\textstyle\bigcup_n}{\mathcal G}_n\right)\,.
$$
\noindent If $C_n$ stably converges to $M$ and $D_n$ converges to $N$
stably in the strong sense, with respect to $\mathcal G$, then
$$
[C_n, D_n]\longrightarrow M \otimes N\qquad stably.
$$
(Here, $M\otimes N$ is the kernel on $S\times S$ such that $(M
\otimes N )(\omega) = M(\omega) \otimes N(\omega)$ for all $\omega$.)
\end{theorem}
This last result contains as a special case the fact that stable
convergence and convergence in probability combine well: that is, if
$C_n$ stably converges to $M$ and $D_n$ converges in probability to a
random variable $D$, then $(C_n,D_n)$ stably converges to $M \otimes
\delta_D$, where $\delta_D$ denotes the Dirac kernel concentrated in $D$.


\begin{thebibliography}{10}
\addcontentsline{toc}{section}{\refname}

\bibitem{ale-cri-RP}
G.~Aletti and I.~Crimaldi.
\newblock The rescaled {P}\'olya urn: local reinforcement and chi-squared
  goodness of fit test.
\newblock {\em arXiv:1906.10951}, 2019.

\bibitem{AleCri_new20_suppl}
G.~Aletti and I.~Crimaldi.
\newblock Generalized rescaled {P}\'olya urn and its statistical applications.
\newblock {\em Supplementary Material of this article}, 2020.

\bibitem{AlCrGh}
G.~Aletti, I.~Crimaldi, and A.~Ghiglietti.
\newblock Synchronization of reinforced stochastic processes with a
  network-based interaction.
\newblock {\em Ann. Appl. Probab.}, 27(6):3787--3844, 2017.

\bibitem{ale-cri-ghi-MEAN}
G.~Aletti, I.~Crimaldi, and A.~Ghiglietti.
\newblock Networks of reinforced stochastic processes: asymptotics for the
  empirical means.
\newblock {\em Bernoulli}, 25(4B):3339--3378, 2019.

\bibitem{ale-cri-ghi-WEIGHT-MEAN}
G.~Aletti, I.~Crimaldi, and A.~Ghiglietti.
\newblock Interacting reinforced stochastic processes: Statistical inference
  based on the weighted empirical means.
\newblock {\em Bernoulli}, 26(2):1098--1138, 2020.

\bibitem{ale-cri-sar}
G.~Aletti, I.~Crimaldi, and F.~Saracco.
\newblock A model for the twitter sentiment curve.
\newblock {\em arXiv:2011.05933}, 2020.

\bibitem{AlGhRo}
G.~Aletti, A.~Ghiglietti, and W.~F. Rosenberger.
\newblock Nonparametric covariate-adjusted response-adaptive design based on a
  functional urn model.
\newblock {\em Ann. Statist.}, 46(6B):3838--3866, 2018.

\bibitem{AlGhVi}
G.~Aletti, A.~Ghiglietti, and A.~N. Vidyashankar.
\newblock Dynamics of an adaptive randomly reinforced urn.
\newblock {\em Bernoulli}, 24(3):2204--2255, 2018.

\bibitem{bergh}
D.~Bergh.
\newblock Sample size and chi-squared test of fit--- a comparison between a
  random sample approach and a chi-square value adjustment method using swedish
  adolescent data.
\newblock In Q.~Zhang and H.~Yang, editors, {\em Pacific Rim Objective
  Measurement Symposium (PROMS) 2014 Conference Proceedings}, pages 197--211,
  Berlin, Heidelberg, 2015. Springer Berlin Heidelberg.

\bibitem{BeCrPrRi11}
P.~Berti, I.~Crimaldi, L.~Pratelli, and P.~Rigo.
\newblock A central limit theorem and its applications to multicolor randomly
  reinforced urns.
\newblock {\em J. Appl. Probab.}, 48(2):527--546, 2011.

\bibitem{BeCrPrRi16}
P.~Berti, I.~Crimaldi, L.~Pratelli, and P.~Rigo.
\newblock Asymptotics for randomly reinforced urns with random barriers.
\newblock {\em J. Appl. Probab.}, 53(4):1206--1220, 2016.

\bibitem{BERTONI2018}
D.~Bertoni, G.~Aletti, G.~Ferrandi, A.~Micheletti, D.~Cavicchioli, and
  R.~Pretolani.
\newblock Farmland use transitions after the cap greening: a preliminary
  analysis using markov chains approach.
\newblock {\em Land Use Policy}, 79:789 -- 800, 2018.

\bibitem{cal-et-al}
G.~Caldarelli, R.~de~Nicola, M.~Petrocchi, M.~Pratelli, and F.~Saracco.
\newblock Analysis of online misinformation during the peak of the covid-19
  pandemics in italy.
\newblock {\em arXiv: 2010.01913}, 2020.

\bibitem{chanda}
K.~C. Chanda.
\newblock Chi-squared tests of goodness-of-fit for dependent observations.
\newblock In {\em Asymptotics, Non-Parametrics and Time Series, Statist.
  Textbooks Monogr.}, volume 158, pages 743--756. Dekker, 1999.

\bibitem{chen2013}
M.-R. Chen and M.~Kuba.
\newblock On generalized pólya urn models.
\newblock {\em J. Appl. Probab.}, 50(4):1169--1186, 12 2013.

\bibitem{chen2014building}
Y.~Chen and S.~Skiena.
\newblock Building sentiment lexicons for all major languages.
\newblock In {\em Proceedings of the 52nd Annual Meeting of the Association for
  Computational Linguistics (Short Papers)}, pages 383--389, 2014.

\bibitem{chessa}
A.~Chessa, I.~Crimaldi, M.~Riccaboni, and L.~Trapin.
\newblock Cluster analysis of weighted bipartite networks: A new copula-based
  approach.
\newblock {\em PLOS ONE}, 9(10):1--12, 10 2014.

\bibitem{collevecchio2013}
A.~Collevecchio, C.~Cotar, and M.~LiCalzi.
\newblock On a preferential attachment and generalized pólya’s urn model.
\newblock {\em Ann. Appl. Probab.}, 23(3):1219--1253, 06 2013.

\bibitem{crimaldi-libro}
I.~Crimaldi.
\newblock {\em Introduzione alla nozione di convergenza stabile e sue varianti
  (Introduction to the notion of stable convergence and its variants)},
  volume~57.
\newblock Unione Matematica Italiana, Monograf s.r.l., Bologna, Italy., 2016.
\newblock Book written in Italian.

\bibitem{cri-dai-lou-min}
I.~Crimaldi, P.~Dai~Pra, P.-Y. Louis, and I.~G. Minelli.
\newblock Synchronization and functional central limit theorems for interacting
  reinforced random walks.
\newblock {\em Stochastic Processes and their Applications}, 129(1):70--101,
  2019.

\bibitem{cri-dai-min}
I.~Crimaldi, P.~Dai~Pra, and I.~G. Minelli.
\newblock Fluctuation theorems for synchronization of interacting {P}\'{o}lya's
  urns.
\newblock {\em Stochastic Process. Appl.}, 126(3):930--947, 2016.

\bibitem{CriLetPra}
I.~Crimaldi, G.~Letta, and L.~Pratelli.
\newblock {\em A Strong Form of Stable Convergence}, volume 1899, pages
  203--225.
\newblock Springer, 2007.

\bibitem{dai-lou-min}
P.~Dai~Pra, P.-Y. Louis, and I.~G. Minelli.
\newblock Synchronization via interacting reinforcement.
\newblock {\em J. Appl. Probab.}, 51(2):556--568, 2014.

\bibitem{EggPol23}
F.~Eggenberger and G.~P\'olya.
\newblock {\"{U}}ber die statistik verketteter vorg\"ange.
\newblock {\em ZAMM - Journal of Applied Mathematics and Mechanics /
  Zeitschrift f\"ur Angewandte Mathematik und Mechanik}, 3(4):279--289, 1923.

\bibitem{fort}
G.~Fort.
\newblock Central limit theorems for stochastic approximation with controlled
  markov chain dynamics.
\newblock {\em ESAIM: PS}, 19:60--80, 2015.

\bibitem{MR0403125}
T.~Gasser.
\newblock Goodness-of-fit tests for correlated data.
\newblock {\em Biometrika}, 62(3):563--570, 1975.

\bibitem{gleser}
L.~J. Gleser and D.~S. Moore.
\newblock The effect of dependence on chi-squared and empiric distribution
  tests of fit.
\newblock {\em The Annals of Statistics}, 11(4):1100--1108, 1983.

\bibitem{HallHeyde}
P.~Hall and C.~C. Heyde.
\newblock {\em Martingale limit theory and its application}.
\newblock Academic Press Inc. [Harcourt Brace Jovanovich Publishers], New York,
  1980.
\newblock Probability and Mathematical Statistics.

\bibitem{holmes}
M.~Holmes and A.~Sakai.
\newblock Senile reinforced random walks.
\newblock {\em Stochastic Processes and their Applications},
  117(10):1519--1539, 2007.

\bibitem{ieva}
F.~Ieva, A.~M. Paganoni, D.~Pigoli, and V.~Vitelli.
\newblock Multivariate functional clustering for the morphological analysis of
  electrocardiograph curves.
\newblock {\em Journal of the Royal Statistical Society. Series C (Applied
  Statistics)}, 62(3):401--418, 2013.

\bibitem{knoke2002}
D.~Knoke, G.~W. Bohrnstedt, and A.~Potter~Mee.
\newblock {\em Statistics for Social Data Analysis}.
\newblock F.E.Peacock Publishers, 2002.

\bibitem{KusYin03}
H.~J. Kushner and G.~G. Yin.
\newblock {\em Stochastic approximation and recursive algorithms and
  applications}, volume~35 of {\em Applications of Mathematics (New York)}.
\newblock Springer-Verlag, New York, second edition, 2003.
\newblock Stochastic Modelling and Applied Probability.

\bibitem{lar-pag}
S.~Laruelle and G.~Pag\'es.
\newblock Randomized urn models revisited using stochastic approximation.
\newblock {\em Ann. Appl. Proba.}, 23(4):1409--1436, 2013.

\bibitem{mailler}
N.~Lasmar, C.~Mailler, and O.~Selmi.
\newblock Multiple drawing multi-colour urns by stochastic approximation.
\newblock {\em J. Appl. Probab.}, 55(1):254--281, 2018.

\bibitem{mah}
H.~M. Mahmoud.
\newblock {\em {P}\'olya urn models}.
\newblock Texts in Statistical Science Series. CRC Press, Boca Raton, FL, 2009.

\bibitem{AM2019}
A.~Micheletti, G.~Aletti, G.~Ferrandi, D.~Bertoni, D.~Cavicchioli, and
  R.~Pretolani.
\newblock A weighted {$\chi^2$} test to detect the presence of a major change
  point in non-stationary {M}arkov chains.
\newblock {\em Stat. Methods Appl.}, 29(4):899--912, 2020.

\bibitem{mok-pel2006}
A.~Mokkadem and M.~Pelletier.
\newblock Convergence rate and averaging of nonlinear two-time-scale stochastic
  approximation algorithms.
\newblock {\em Ann. Appl. Probab.}, 16(3):1671--1702, 08 2006.

\bibitem{MR1894384}
W.~Pan.
\newblock Goodness-of-fit tests for {GEE} with correlated binary data.
\newblock {\em Scand. J. Statist.}, 29(1):101--110, 2002.

\bibitem{pel}
M.~Pelletier.
\newblock Weak convergence rates for stochastic approximation with application
  to multiple targets and simulated annealing.
\newblock {\em Ann. Appl. Probab.}, 8(1):10--44, 1998.

\bibitem{pem90}
R.~Pemantle.
\newblock A time-dependent version of p\'olya's urn.
\newblock {\em J. Theor. Probab.}, 3:627--637, 1990.

\bibitem{pemantle2007}
R.~Pemantle.
\newblock A survey of random processes with reinforcement.
\newblock {\em Probab. Surveys}, 4:1--79, 2007.

\bibitem{radlow}
R.~Radlow and E.~F. Alf~Jr.
\newblock An alternate multinomial assessment of the accuracy of the $\chi^2$
  test of goodness of fit.
\newblock {\em Journal of the American Statistical Association},
  70(352):811--813, 1975.

\bibitem{RaoScott81}
J.~N.~K. Rao and A.~J. Scott.
\newblock The analysis of categorical data from complex sample surveys:
  chi-squared tests for goodness of fit and independence in two-way tables.
\newblock {\em J. Amer. Statist. Assoc.}, 76(374):221--230, 1981.

\bibitem{sah}
N.~Sahasrabudhe.
\newblock {Synchronization and fluctuation theorems for interacting Friedman
  urns}.
\newblock {\em J. Appl. Probab.}, 53(4):1221--1239, 2016.

\bibitem{MR3190613}
M.-L. Tang, Y.-B. Pei, W.-K. Wong, and J.-L. Li.
\newblock Goodness-of-fit tests for correlated paired binary data.
\newblock {\em Stat. Methods Med. Res.}, 21(4):331--345, 2012.

\bibitem{tharwat}
A.~Tharwat.
\newblock Independent component analysis: An introduction.
\newblock {\em Applied Computing and Informatics}, 2018.

\bibitem{xu}
D.~Xu and Y.~Tian.
\newblock A comprehensive survey of clustering algorithms.
\newblock {\em Annals of Data Science}, 2(2):165--193, 2015.

\bibitem{Zhang2016}
L.-X. Zhang.
\newblock Central limit theorems of a recursive stochastic algorithm with
  applications to adaptive designs.
\newblock {\em Ann. Appl. Probab.}, 26(6):3630--3658, 2016.

\end{thebibliography}

\begin{thebibliography}{58}
\addcontentsline{toc}{section}{\refname}

\bibitem{S:ale-cri-RP}
G.~Aletti and I.~Crimaldi.
\newblock The rescaled {P}\'olya urn: local reinforcement and chi-squared
  goodness of fit test.
\newblock {\em arXiv:1906.10951}, 2019.

\bibitem{S:AleCri_new20}
G.~Aletti and I.~Crimaldi.
\newblock Generalized rescaled {P}\'olya urn and its statistical applications.
\newblock {\em Main Article of this supplementary material}, 2020.

\bibitem{S:ale-cri-ghi-MEAN}
G.~Aletti, I.~Crimaldi, and A.~Ghiglietti.
\newblock Networks of reinforced stochastic processes: asymptotics for the
  empirical means.
\newblock {\em Bernoulli}, 25(4B):3339--3378, 2019.

\bibitem{S:BeCrPrRi11}
P.~Berti, I.~Crimaldi, L.~Pratelli, and P.~Rigo.
\newblock A central limit theorem and its applications to multicolor randomly
  reinforced urns.
\newblock {\em J. Appl. Probab.}, 48(2):527--546, 2011.

\bibitem{S:crimaldi-libro}
I.~Crimaldi.
\newblock {\em Introduzione alla nozione di convergenza stabile e sue varianti
  (Introduction to the notion of stable convergence and its variants)},
  volume~57.
\newblock Unione Matematica Italiana, Monograf s.r.l., Bologna, Italy., 2016.
\newblock Book written in Italian.

\bibitem{S:cri-dai-min}
I.~Crimaldi, P.~Dai~Pra, and I.~G. Minelli.
\newblock Fluctuation theorems for synchronization of interacting {P}\'{o}lya's
  urns.
\newblock {\em Stochastic Process. Appl.}, 126(3):930--947, 2016.

\bibitem{S:CriLetPra}
I.~Crimaldi, G.~Letta, and L.~Pratelli.
\newblock {\em A Strong Form of Stable Convergence}, volume 1899, pages
  203--225.
\newblock Springer, 2007.

\bibitem{S:del}
B.~Delyon.
\newblock Stochastic approximation with decreasing gain: Convergence and
  asymptotic theory.
\newblock {\em Technical report}, 2000.

\bibitem{S:fort}
G.~Fort.
\newblock Central limit theorems for stochastic approximation with controlled
  markov chain dynamics.
\newblock {\em ESAIM: PS}, 19:60--80, 2015.

\bibitem{S:HallHeyde}
P.~Hall and C.~C. Heyde.
\newblock {\em Martingale limit theory and its application}.
\newblock Academic Press Inc. [Harcourt Brace Jovanovich Publishers], New York,
  1980.
\newblock Probability and Mathematical Statistics.

\bibitem{S:KusYin03}
H.~J. Kushner and G.~G. Yin.
\newblock {\em Stochastic approximation and recursive algorithms and
  applications}, volume~35 of {\em Applications of Mathematics (New York)}.
\newblock Springer-Verlag, New York, second edition, 2003.
\newblock Stochastic Modelling and Applied Probability.

\bibitem{S:met}
M.~M\'etivier.
\newblock {\em Semimartingales}.
\newblock Walter de Gruyter and Co., Berlin, 1982.

\bibitem{S:mok-pel2006}
A.~Mokkadem and M.~Pelletier.
\newblock Convergence rate and averaging of nonlinear two-time-scale stochastic
  approximation algorithms.
\newblock {\em Ann. Appl. Probab.}, 16(3):1671--1702, 08 2006.

\bibitem{S:pel}
M.~Pelletier.
\newblock Weak convergence rates for stochastic approximation with application
  to multiple targets and simulated annealing.
\newblock {\em Ann. Appl. Probab.}, 8(1):10--44, 1998.

\bibitem{S:pem90}
R.~Pemantle.
\newblock A time-dependent version of p\'olya's urn.
\newblock {\em J. Theor. Probab.}, 3:627--637, 1990.

\bibitem{S:RaoScott81}
J.~N.~K. Rao and A.~J. Scott.
\newblock The analysis of categorical data from complex sample surveys:
  chi-squared tests for goodness of fit and independence in two-way tables.
\newblock {\em J. Amer. Statist. Assoc.}, 76(374):221--230, 1981.

\bibitem{S:rob}
H.~Robbins and D.~Siegmund.
\newblock A convergence theorem for non negative almost supermartingales and
  some applications.
\newblock In {\em Optimizing Methods in Statistics}, pages 233--257. Academic
  Press, 1971.

\bibitem{S:SM50}
J.~Sherman and W.~J. Morrison.
\newblock Adjustment of an inverse matrix corresponding to a change in one
  element of a given matrix.
\newblock {\em Ann. Math. Statist.}, 21(1):124--127, 03 1950.

\bibitem{S:Zhang2016}
L.-X. Zhang.
\newblock Central limit theorems of a recursive stochastic algorithm with
  applications to adaptive designs.
\newblock {\em Ann. Appl. Probab.}, 26(6):3630--3658, 2016.


\end{thebibliography}
\end{document}